\documentclass[12pt]{article}
\usepackage[letterpaper,margin=1in]{geometry}
\usepackage{amsmath,amssymb,amsthm,mathtools}
\usepackage{enumitem}
\usepackage{tikz}
\usepackage{graphicx}
\usepackage{subcaption}
\usepackage[colorlinks=true,linkcolor=blue,citecolor=blue,urlcolor=blue]{hyperref}

\newtheorem{theorem}{Theorem}
\newtheorem{lemma}[theorem]{Lemma}
\newtheorem{proposition}[theorem]{Proposition}
\newtheorem{corollary}[theorem]{Corollary}
\theoremstyle{remark}
\newtheorem{remark}[theorem]{Remark}
\theoremstyle{definition}
\newtheorem{example}[theorem]{Example}

\newcommand{\Aut}{\mathrm{Aut}}

\title{\large The existence and uniqueness of magic-faced~hypercubes, and applications to Khajuraho most-perfect magic~squares, cubes, and hypercubes}
\author{\normalsize Manjul Bhargava}
\date{\small \today}

\begin{document}
\maketitle

\vspace{-.25in}
\begin{abstract}
A \emph{magic-lined hypercube} (or, simply, {\it magic hypercube}) of order $k$ and dimension~$n$ is an arrangement of the numbers $1,\dots,k^n$ in a $k\times\cdots\times k$ ($n$-fold) grid such that every line of $k$ numbers parallel to a coordinate axis has the same magic sum. While such hypercubes exist for every order $k\ge 3$ and every dimension $n$, no magic-lined hypercube of order~$2$ exists in any dimension $n\ge 2$. 

For hypercubes of order~$2$, we thus relax the magic  condition from lines to two-dimensional faces or planes. We call an arrangement of the numbers $1,\dots,2^n$ in a $2\times\cdots\times2$ ($n$-fold) grid \emph{magic-faced} if every $2\times2$ face has the same magic sum. We prove that a magic-faced hypercube of order~$2$ exists in every dimension $n\ge0$, and that it is unique up to a certain natural set of transformations of size $2^n(n+1)!$ when $n$ is even and $2^nn\cdot n!$ when $n$ is odd. 

As~an~application, we recover and generalize the classical $4\times4$ Khajuraho magic square, answer a question of Coxeter on the group acting on the $384$ ``most-perfect" $4\times4$ magic squares, and extend the picture to higher dimensions. In particular, we prove that, in dimension~$n$, these most-perfect objects form a single orbit under a certain natural action of the Weyl group $W(B_{2n})$. 
\end{abstract}

\section{Introduction}
\label{sec:intro}

A {\it magic-lined hypercube} (or, simply, a {\it magic hypercube}) of order~$k$ and dimension~$n$ is an arrangement of the numbers $1,\dots,k^n$ in a $k\times\cdots\times k$ ($n$-fold) matrix such that any $k$ numbers lying on a line parallel to one of the $n$ axes have the same sum, called the \emph{magic sum}. (Some authors also require the main diagonals to sum to the magic sum in the definition of a magic hypercube.)

It is known that magic-lined hypercubes of every order $k\ge3$ and every dimension $n$ exist. However, it is also known that magic-lined hypercubes of order $2$ do \emph{not} exist in any dimension $n\ge2$. For example, there is no $2\times2$ magic square, and no $2\times2\times2$ magic-lined cube. This suggests that the condition of being magic-lined is too strong a condition for hypercubes of order $2$.


It is thus natural to relax the magic sum condition for order-2 hypercubes from lines to planes.  We call an arrangement of the numbers $1,\ldots,2^n$ in a $2\times2\times \cdots \times 2$ ($n$-fold) matrix a {\it  magic-faced hypercube} if every $2\times 2$ square face has the same sum.  

The purpose of this article is to demonstrate the existence and uniqueness, up to a natural set of symmetries, of magic-faced hypercubes in every dimension $n\ge0$. The uniqueness  points to the naturality of the magic-faced condition, and leads to a new type of combinatorial design for hypercubes of order $2$.

\begin{theorem}
\label{thm:main}
There exists a magic-faced hypercube in every dimension $n\ge0$. Moreover: \vspace{-.045in}
\begin{enumerate}
\item[\rm(a)] If $n$ is even, then a magic-faced hypercube of dimension $n$ is unique up to the action of a permutation group $G_n \subset S_{2^n}$, where $G_n \cong C_2^n \rtimes S_{n+1}$. 
That is, the magic-faced hypercubes of dimension $n$ form a single orbit of $G_n.$ \vspace{-.045in}
\item[\rm(b)] If $n$ is odd, then any magic-faced hypercube of dimension $n$ has a distinguished coordinate direction $j\in\{1,\ldots,n\}$ where every pair of numbers on a line in that direction sums to~$2^n+1$. A magic-faced hypercube of dimension $n$ and direction $j$ is unique up to the action of a permutation group $G_{n,j}  \subset S_{2^n}$, where $G_{n,j} \cong  \bigl(C_2^{n-1}\rtimes S_n\bigr)\times C_2$.  
That is, the magic-faced hypercubes of dimension $n$ and direction $j$ form a single orbit of~$G_{n,j}.$ 
\end{enumerate}\vspace{-.045in}
Hence there are $2^n(n+1)!$ magic-faced hypercubes of dimension $n$ when $n$ is even and $n\cdot 2^n n!$ when $n$ is odd. 
\end{theorem}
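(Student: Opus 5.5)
We encode a filling as a function $f\colon\{0,1\}^n\to\{1,\dots,2^n\}$ and center it by setting $h(x)=2f(x)-(2^n+1)$. Then $h$ is a bijection onto the odd integers in $[-(2^n-1),\,2^n-1]$. Counting each cell over all faces containing it shows that the common face sum must be $2(2^n+1)$, so the magic-faced condition says exactly that $h$ sums to $0$ over every $2\times2$ face.

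Next I would expand $h$ in Walsh characters, $h(x)=\sum_{S\subseteq[n]}c_S(-1)^{S\cdot x}$. Summing over the face in directions $i,j$ (other coordinates fixed) annihilates every character with $i\in S$ or $j\in S$. What survives is $4\sum_{S\not\ni i,j}c_S(-1)^{S\cdot x}$, viewed as a function of the remaining coordinates, so all faces vanish iff $c_S=0$ whenever $S$ misses two indices. Hence only $S=[n]$ and $S=[n]\setminus\{i\}$ survive. With $\varepsilon_i=(-1)^{x_i}$ this gives the normal form
\[
h(x)=\Bigl(\prod_{i=1}^n\varepsilon_i\Bigr)\Bigl(a_0+\sum_{i=1}^n a_i\varepsilon_i\Bigr),
\]
with real $a_0,\dots,a_n$. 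It is convenient to adjoin $\varepsilon_0$ with $\prod_{i=0}^n\varepsilon_i=1$. The problem then becomes purely arithmetic: classify $(a_0,\dots,a_n)$ for which these $2^n$ values are exactly the odd numbers of absolute value $\le 2^n-1$.

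The main step, and the one I expect to be the obstacle, is this classification lemma. I would prove it by induction on $n$, via the maximal value and the smallest nonzero coefficient. The maximum $2^n-1$ must equal $\sum|a_i|$, realized by a suitable sign vector, and changing one sign lowers the value by $2|a_i|$. Peeling off the smallest coefficient should reduce to dimension $n-1$ on a suitable face and force the nonzero $|a_i|$ to be $1,2,4,\dots,2^{n-1}$, so exactly one of the $n+1$ coefficients is $0$.

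The sign twist $\prod\varepsilon_i$ is where the parity of $n$ enters. I would check directly which position of the zero coefficient is compatible with the resulting values being distinct. My expectation is the following:
\begin{itemize}
\item for $n$ even, any of the $n+1$ positions works, and there is an $S_{n+1}$-symmetry permuting $\varepsilon_0,\dots,\varepsilon_n$;
\item for $n$ odd, $a_0=0$ is forced away, and the zero sits at some $a_j$ with $j\ge1$.
\end{itemize}
In the odd case the vanishing $a_j$ makes $h(x)+h(x+e_j)=0$, which is the distinguished direction of (b). The remaining coefficients are then an arbitrary signed assignment of the powers of $2$.

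Finally I would count and identify the groups. In case (a), the solutions are an assignment of $\{0,1,\dots,2^{n-1}\}$ to the $n+1$ slots together with $n$ signs (the global sign is absorbed or tied by the product constraint), giving $2^n(n+1)!$ solutions. In case (b), for each $j$, the remaining $n$ slots carry $\{1,\dots,2^{n-1}\}$ in $n!$ orders with $2^n$ signs, giving $n\cdot 2^n n!$ in total.

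To realize $G_n$ and $G_{n,j}$ as permutation groups of $S_{2^n}$, I would exhibit their generators as natural operations on the values. Sign flips of the $a_i$ correspond to complementing bit-planes of the binary expansion of $f-1$, and permutations of the slots permute these bit-planes, twisted by the product constraint. I would then check the group relations, giving $C_2^n\rtimes S_{n+1}$, and $(C_2^{n-1}\rtimes S_n)\times C_2$ with the $C_2$ being the global negation $f\mapsto 2^n+1-f$. Since the action on solutions is simply transitive, this yields the single orbits. Existence in every dimension follows from the explicit solution with coefficients $2^k$.
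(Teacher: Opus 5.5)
Your reduction is correct and is essentially the paper's. The Walsh normal form is the Fourier description of Remark~\ref{rem:fourier}; your $a_k$ are the paper's $b_k$ up to sign. Put $\eta_0=\varepsilon_0=\prod_{i\ge1}\varepsilon_i$ and $\eta_i=\eta_0\varepsilon_i$ for $i\ge1$. Then the values of $h$ are $\sum_{i=0}^n a_i\eta_i$, where $\eta$ ranges over sign vectors with $\prod_{i=0}^n\eta_i=1$ if $n$ is even, and with $\prod_{i=1}^n\eta_i=1$ and $\eta_0$ free if $n$ is odd. These are exactly the problems of Lemmas~\ref{lemma1} and~\ref{lemma2}. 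Given the classification, your counts, the direction $j$, and existence are fine.

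\textbf{The gap is the classification lemma itself.} It carries the whole weight of the theorem, and the mechanism you sketch does not work.
\begin{itemize}
\item Because of the product constraint you cannot change a single constrained sign. From a maximizing vector, the admissible moves flip an even number of constrained signs. So the greedy argument of Lemma~\ref{lem:auxiliary} turns into an even-sized-subset-sum problem, which is precisely the hard part.
\item Passing to a facet gives no instance of the same problem, because facets need not carry consecutive values. For example, the outer cube of the dimension-$4$ example in Figure~\ref{mfh} carries $1,4,6,7,9,12,14,15$.
\item Above all, the existence of a vanishing coefficient cannot be proved without using parity. Take $(a_0,\dots,a_3)=(-\tfrac12,\tfrac32,\tfrac52,\tfrac72)$ under the constraint $\prod_{i=0}^3\eta_i=1$. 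It yields exactly the odd numbers in $[-7,7]$, with no zero coefficient. Its maximum is $7<\sum|a_i|=8$, so your assertion that the maximum equals $\sum|a_i|$ also depends on parity and needs proof. This is the paper's $n=3$ exception; Table~\ref{tab:anomaly} gives analogues for the other constraint.
\end{itemize}
In your plan, parity enters only in locating the zero, which is the easy step.

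\textbf{The missing idea can be supplied within your own framework.} It is shorter than the paper's route through Propositions~\ref{lem:even} and~\ref{lem:odd}, which determine which pairs realize the values $1,2,3$, forcing $b_3=b_0+b_1+b_2$ and a collision when $n\ge4$.
\begin{itemize}
\item In both parities the constrained block of signs has odd size ($n+1$, resp.\ $n$). Hence negating a single coefficient in the block replaces the value multiset $V$ by $-V$. Negating $a_0$ when $n$ is odd leaves $V$ unchanged.
\item Since the target set is symmetric, you may assume every $a_i\ge0$, and recover all solutions afterwards by sign changes.
\item Then $\max V=\sum_i a_i$. But the all-minus vector is inadmissible, so $\min V=-\sum_i a_i+2m$, where $m$ is the smallest coefficient in the block.
\item Symmetry of the target forces $m=0$. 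The constraint then becomes vacuous, and Lemma~\ref{lem:auxiliary} shows the remaining $n$ coefficients are $1,2,\dots,2^{n-1}$.
\end{itemize}
This is where parity genuinely enters, and it shows for odd $n$ that the zero lies at some $j\ge1$. With it, the rest of your plan goes through.
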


Our proof leads naturally to an explicit construction of the essentially unique such magic-faced hypercube in each dimension $n$; the cases $n\le4$ are illustrated in Figure~\ref{mfh}.

\vspace{.03in}

\begin{figure}[htbp]
\label{mfh}
\centering
\begin{subfigure}[b]{0.10\textwidth}
\centering
\begin{tikzpicture}[every path/.style={draw=red}, every node/.style={text=blue}, scale=0.8, baseline=(current bounding box.center)]
\node[fill=white,inner sep=2pt] at (0,1) {$1$};
\end{tikzpicture}
\caption*{dim 0}
\end{subfigure}
\hspace{0.17\textwidth}
\begin{subfigure}[b]{0.16\textwidth}
\centering
\begin{tikzpicture}[every path/.style={draw=red}, every node/.style={text=blue}, scale=0.6, baseline=(current bounding box.center)]
\draw (0,1) -- (2,1);
\node[fill=white,inner sep=2pt] at (0,1) {$1$};
\node[fill=white,inner sep=2pt] at (2,1) {$2$};
\end{tikzpicture}
\caption*{dim 1}
\end{subfigure}
\hspace{0.17\textwidth}
\begin{subfigure}[b]{0.20\textwidth}
\centering
\begin{tikzpicture}[every path/.style={draw=red}, every node/.style={text=blue}, xscale=.795, yscale=.745, baseline=(current bounding box.center)]
\draw (0,0) -- (2,0) -- (2,2) -- (0,2) -- cycle;
\node[fill=white,inner sep=2pt] at (0,0) {$1$};
\node[fill=white,inner sep=2pt] at (2,0) {$3$};
\node[fill=white,inner sep=2pt] at (2,2) {$4$};
\node[fill=white,inner sep=2pt] at (0,2) {$2$};
\end{tikzpicture}
\caption*{dim 2}
\end{subfigure}

\vspace{2.05em}

\begin{subfigure}[b]{0.34\textwidth}
\centering
\raisebox{1.1cm}{%
\begin{tikzpicture}[every path/.style={draw=red}, every node/.style={text=blue}, xscale=0.5724, yscale=0.54]
\draw (0,0) -- (3,0) -- (3,3) -- (0,3) -- cycle;
\draw (1.3,1.3) -- (2.85,1.3);
\draw (3.15,1.3) -- (4.3,1.3);
\draw (4.3,1.3) -- (4.3,4.3);
\draw (4.3,4.3) -- (1.3,4.3);
\draw (1.3,4.3) -- (1.3,3.15);
\draw (1.3,2.85) -- (1.3,1.3);
\draw (0,0) -- (1.3,1.3);
\draw (3,0) -- (4.3,1.3);
\draw (3,3) -- (4.3,4.3);
\draw (0,3) -- (1.3,4.3);
\node[fill=white,inner sep=2pt] at (0,0) {$1$};
\node[fill=white,inner sep=2pt] at (3,0) {$8$};
\node[fill=white,inner sep=2pt] at (3,3) {$3$};
\node[fill=white,inner sep=2pt] at (0,3) {$6$};
\node[fill=white,inner sep=2pt] at (1.3,1.3) {$4$};
\node[fill=white,inner sep=2pt] at (4.3,1.3) {$5$};
\node[fill=white,inner sep=2pt] at (4.3,4.3) {$2$};
\node[fill=white,inner sep=2pt] at (1.3,4.3) {$7$};
\node[text=black] at (2.15,-1.0) {dim 3};
\end{tikzpicture}}
\end{subfigure}
\hspace{2em}
\begin{subfigure}[b]{0.55\textwidth}
\centering
\begin{tikzpicture}[every path/.style={draw=red}, every node/.style={text=blue}, xscale=0.4225, yscale=.4125, baseline=(current bounding box.center)]
\draw (0,0) -- (10,0);
\draw (0,0) -- (0,10);
\draw (10,0) -- (10,10);
\draw (0,10) -- (10,10);
\draw (2.5,1.5) -- (8.52,1.5);
\draw (9.08,1.5) -- (9.72,1.5);
\draw (10.28,1.5) -- (12.5,1.5);
\draw (12.5,1.5) -- (12.5,11.5);
\draw (12.5,11.5) -- (2.5,11.5);
\draw (2.5,11.5) -- (2.5,10.28);
\draw (2.5,9.72) -- (2.5,8.306);
\draw (2.5,7.746) -- (2.5,2.912);
\draw (2.5,2.352) -- (2.5,1.5);
\draw (0,0) -- (2.5,1.5);
\draw (10,0) -- (12.5,1.5);
\draw (10,10) -- (12.5,11.5);
\draw (0,10) -- (2.5,11.5);
\draw (3.8,4) -- (6.8,4) -- (6.8,7) -- (3.8,7) -- cycle;
\draw (5.675,5.125) -- (6.52,5.125);
\draw (7.08,5.125) -- (8.675,5.125);
\draw (8.675,5.125) -- (8.675,8.125);
\draw (5.675,8.125) -- (7.72,8.125);
\draw (8.28,8.125) -- (8.675,8.125);
\draw (5.675,8.125) -- (5.675,7.28);
\draw (5.675,6.72) -- (5.675,5.125);
\draw (3.8,4) -- (5.675,5.125);
\draw (6.8,4) -- (8.675,5.125);
\draw (6.8,7) -- (8.675,8.125);
\draw (3.8,7) -- (5.675,8.125);
\draw (0,0) -- (3.8,4);
\draw (10,0) -- (6.8,4);
\draw (10,10) -- (6.8,7);
\draw (0,10) -- (3.8,7);
\draw (2.5,1.5) -- (4.505,3.789);
\draw (4.874,4.211) -- (5.675,5.125);
\draw (12.5,1.5) -- (10.2,3.677);
\draw (9.797,4.062) -- (8.675,5.125);
\draw (12.5,11.5) -- (10.21,9.479);
\draw (9.79,9.109) -- (8.675,8.125);
\draw (2.5,11.5) -- (3.719,10.2);
\draw (4.103,9.796) -- (5.675,8.125);
\node[fill=white,inner sep=2pt] at (0,0) {$1$};
\node[fill=white,inner sep=2pt] at (10,0) {$15$};
\node[fill=white,inner sep=2pt] at (10,10) {$4$};
\node[fill=white,inner sep=2pt] at (0,10) {$14$};
\node[fill=white,inner sep=2pt] at (2.5,1.5) {$12$};
\node[fill=white,inner sep=2pt] at (12.5,1.5) {$6$};
\node[fill=white,inner sep=2pt] at (12.5,11.5) {$9$};
\node[fill=white,inner sep=2pt] at (2.5,11.5) {$7$};
\node[fill=white,inner sep=2pt] at (3.8,4) {$8$};
\node[fill=white,inner sep=2pt] at (6.8,4) {$10$};
\node[fill=white,inner sep=2pt] at (6.8,7) {$5$};
\node[fill=white,inner sep=2pt] at (3.8,7) {$11$};
\node[fill=white,inner sep=2pt] at (5.675,5.125) {$13$};
\node[fill=white,inner sep=2pt] at (8.675,5.125) {$3$};
\node[fill=white,inner sep=2pt] at (8.675,8.125) {$16$};
\node[fill=white,inner sep=2pt] at (5.675,8.125) {$2$};
\end{tikzpicture}
\caption*{dim 4}
\end{subfigure}
\caption{The essentially-unique magic-faced hypercube of order $2$ in dimensions $n=0,1,2,3,4$. 
Every $4$-cycle (i.e.,  every $2$-dimensional face) of each graph sums to the same magic constant, namely, $2(2^n+1)$. 
Thus each $2\times 2$ face of the magic-faced cube has sum~$18$, while each $2\times 2$ face of the magic-faced tesseract has sum~$34$. 
}
\label{mfh}
\end{figure}

In fact, a stronger structural statement regarding such magic-faced hypercubes holds, which in turn explains the shape of the symmetry group/set $G_n$ appearing in Theorem~\ref{thm:main}.  To state it, we observe that a magic-faced hypercube may be viewed as a labeling of the vertices of the hypercube graph $Q_n$ of dimension $n$ with the integers $1,\ldots,2^n$ such that the labels on every $4$-cycle have the same magic sum.  

The {\it folded hypercube graph} $FQ_n$ of dimension $n$ is the unique augmentation of the hypercube graph $Q_n$ where every pair of antipodal points of $Q_n$ has also been joined by an~edge.  

Similarly, the {\it hierarchical folded hypercube graph}  $H\!FQ_n$ of dimension $n$ is an augmentation of $Q_n$ where additional edges are added connecting pairs of antipodal points of each of two disjoint copies of $Q_{n-1}$ contained in $Q_n$. That is  $H\!FQ_n:=K_2\times FQ_{n-1}$. There are $n$ possible such augmentations $H_jFQ_n$, all isomorphic as graphs, corresponding to the $n$-possible coordinate directions $j$ along which $Q_n$ is broken into two copies of $Q_{n-1}$. 

A {\it magic-faced folded hypercube} (respectively, {\it magic-faced hierarchical folded hypercube}) of dimension $n$ is a labeling of $FQ_n$ (respectively, $H\!FQ_{n}$) with the integers $1,\ldots,2^n$ such that the labels of every 4-cycle have the same magic sum.  Our stronger structural statement about magic-faced hypercubes is then as follows: 

\begin{theorem}
\label{thm:folded}
Every magic-faced hypercube is automatically a magic-faced folded hypercube if $n$ is even, and a magic-faced hierarchical folded hypercube if $n$ is odd.   
\end{theorem}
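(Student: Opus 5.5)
Label the vertices of $Q_n$ by $x\in\{0,1\}^n$ and write $f(x)$ for the label. The plan is to turn the face conditions into strong linear constraints on $f$, then use the fact that the labels are exactly $1,\dots,2^n$. Throughout, $M$ denotes the magic sum.

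The first step is to find $M$. Each vertex lies on exactly $\binom n2$ of the $\binom n2 2^{n-2}$ two-dimensional faces. Summing all the face sums therefore gives
\[
\binom n2 2^{n-2}M=\binom n2\,\frac{2^n(2^n+1)}2,
\]
so $M=2(2^n+1)$ for $n\ge2$. For $n\le1$ there are no faces, and the statement is trivial: for $n=1$ the graph $H\!FQ_1$ adds nothing, and $FQ_0$ is a single vertex.

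The second step is to derive an additive structure. Write $f(x\oplus e_i)+f(x)=s_i(x)$. Two faces sharing an edge in direction $i$ force the following: if $x$ and $x\oplus e_i\oplus e_j$ are opposite corners of a face, then $f(x)+f(x\oplus e_i\oplus e_j)=f(x\oplus e_i)+f(x\oplus e_j)$ up to the constant $M$. More precisely, consider the faces through the edge $\{x,x\oplus e_i\}$. Comparing two of them shows that the edge-sum $s_j$ in direction $j$ is constant across the face, i.e.\ $s_j(x)+s_j(x\oplus e_i)=M$. Iterating, $s_j(x\oplus e_i\oplus e_k)=s_j(x)$ whenever $i,k\ne j$ are distinct. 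Hence, for $n\ge3$, $s_j$ depends on $x$ only through the parity of $\sum_{i\ne j}x_i$. On one parity class $s_j$ equals some $a_j$, and on the other it equals $M-a_j$.

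The third step, which I expect to be the main obstacle, uses the distinctness of the labels $1,\dots,2^n$ to pin down the $a_j$. From step two one can reconstruct $f$ from $f(0)$ and the $a_j$ alone. Moving along a path alternately uses $a_j$ and $M-a_j$, so $f$ is affine in $x$ with a parity twist. This suggests writing $f(x)=c+\sum_j \epsilon_j(x)d_j$ and showing that the multiset of values is $\{1,\dots,2^n\}$ only if $\{|d_j|\}$ are distinct powers of $2$ up to the parity correction. Concretely, I would try to prove that $a_j=2^n+1$ (equivalently $d_j$ symmetric) for at most one $j$, and that this happens exactly when $n$ is odd, with a suitable parity bookkeeping deciding which case occurs. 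This is where the even/odd dichotomy should emerge, via the parity of $\sum_{i\ne j}x_i$ versus the total parity.

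The final step is to check the antipodal sums. For $n$ even, summing the edge relations along a geodesic from $x$ to $\bar x$ telescopes to $f(x)+f(\bar x)=M/2=2^n+1$. For $n$ odd the same telescoping works inside each copy of $Q_{n-1}$ orthogonal to the distinguished direction $j$. Once all edge and antipodal sums equal $2^n+1$, every new $4$-cycle in $FQ_n$ consists of two such pairs, so it sums to $M$. The same holds in $H_jFQ_n$, where the new $4$-cycles pair a $j$-edge with an antipodal edge, or pair two antipodal edges, and hence again have magic sum $2(2^n+1)$.
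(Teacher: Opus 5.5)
Your Steps 1 and 2 are correct. They are the coordinates of Theorem~\ref{thm:coordinates} in another guise: a direction $j$ has $a_j=2^n+1$ exactly when the paper's $b_j$ vanishes. Step 4, however, is false as written, because antipodal sums are not $2^n+1$ in general. Telescoping edge sums along a path of even length (length $n$ for $n$ even, length $n-1$ inside a slice for $n$ odd) produces $f(x)-f(\bar x)$, not $f(x)+f(\bar x)$. For example:
the square $f(00)=1$, $f(10)=2$, $f(11)=3$, $f(01)=4$ has antipodal sums $4$ and $6$;
and in the magic-faced cube $f(000)=1$, $f(100)=8$, $f(010)=6$, $f(110)=3$, $f(001)=4$, $f(101)=5$, $f(011)=7$, $f(111)=2$ (all direction-$1$ lines sum to $9$), the antipodal pairs inside the slice $x_1=0$ sum to $8$ and $10$.

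What you actually need follows from Step 2 alone. A new $4$-cycle of $FQ_n$ is $x,\,x\oplus e_j,\,\bar x\oplus e_j,\,\bar x$, with sum $s_j(x)+s_j(\bar x)$. Since $\bar x$ differs from $x$ in $n-1$ coordinates other than $j$, for $n$ even this number is odd, so $s_j(\bar x)=M-s_j(x)$ and the cycle sums to $M$. So the even case needs no arithmetic at all; this is Theorem~\ref{thm:extra}. For $n$ odd, the same argument handles the folded $4$-cycles inside each slice orthogonal to $e_j$. A crossing cycle $y,\,y',\,y'\oplus e_j,\,y\oplus e_j$ (with $y'$ the in-slice antipode of $y$) instead sums to $s_j(y)+s_j(y')=2s_j(y)$, since $y'$ flips an even number $n-1$ of coordinates other than $j$. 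Hence for odd $n$ the theorem is \emph{equivalent} to the existence of a direction $j$ with $a_j=2^n+1$.

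That existence statement is exactly what Step 3 leaves at ``I would try to prove'', and it is the whole content of the odd case. It fails for general face-magic hypercubes over $\mathbb Q$, so it must come from the labels being $1,\dots,2^n$. In the paper's coordinates it says $b_j=0$ for some $j\ge1$, which is Lemma~\ref{lemma2}. Its proof (Proposition~\ref{lem:odd}) rewrites the label set as $C\cup(b_0+C)$, with $C$ the even-sized subset sums of $\{b_1,\dots,b_n\}$. Assuming no $b_j$ vanishes, it shows that $b_0$ is a power of $2$, locates the two smallest positive elements of $C$, and forces $b_4=b_1+b_2+b_3$. This case can survive only when $n=4$, where sporadic solutions really do occur (Table~\ref{tab:anomaly}). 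A heuristic that the $|d_j|$ ``must be distinct powers of $2$'' cannot see this dependence on $n$, so it is not yet an argument.

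Your stated target, that $a_j=2^n+1$ for some $j$ ``exactly when $n$ is odd'', is also false. The square $f(00)=1$, $f(10)=4$, $f(01)=2$, $f(11)=3$ has both direction-$1$ lines summing to $5$. In every even dimension, the unique special direction $j\in\{0,\dots,n\}$ can be a coordinate direction. The folded-versus-hierarchical dichotomy comes from the parity of $n-1$ in the argument above; arithmetic is needed only to produce $j$ when $n$ is odd.
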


For example, in the dim 2 magic-faced square in Figure~\ref{mfh}, we may add the edges connecting (1,4) and (2,3) to obtain the complete graph $K_4=FQ_2$ on 4 vertices.  The sum of the labels on any 4-cycle of this complete graph indeed have the same sum of 10, since these labels  always consist of the numbers 1 through 4.  Similarly, in the dim 3 magic-faced cube in Figure~\ref{mfh}, we add edges connecting  (1,7), (4,6), (2,8), and (3,5), to obtain the graph~$H\!FQ_3$. The labels on every 4-cycle of this augmented graph also have the same sum of 18, e.g., $1+2+8+7=18$.  Finally, in the dim~4 magic-faced tesseract in Figure~\ref{mfh}, we connect all antipodal pairs (1,16), (14,3), (7,10), etc.\ to obtain the graph $FQ_4$; the labels on every 4-cycle of this folded tesseract also then have the same sum of 34, e.g.,  $14+3+10+7=34$. 

In light of Theorem~\ref{thm:folded}, we may rephrase Theorem~\ref{thm:main} more elegantly as follows. 

\begin{theorem}
\label{thm:folded2}
\hfill
\begin{itemize}
\item[{\em (a)}] If $n$ is even $($resp.\ odd$)$, there exists a magic-faced folded hypercube $($resp.\ hierarchical folded hypercube$)$ $A_n$ of dimension $n$, which is unique up to the action of $\Aut(FQ_n)$ $($resp., $\Aut(H\!FQ_n))$, i.e., up to the automorphisms of the underlying graph. 

\item[{\em (b)}]
Magic-faced hypercubes of dimension $n$ are in one-to-one correspondence with graph embeddings 
\[\begin{cases}
\phi:Q_n\hookrightarrow FQ_n & \text{if } n \text{ is even},\\[2pt]
\phi:Q_n\hookrightarrow H\!FQ_n & \text{if } n \text{ is odd}.
\end{cases}
\]
Given such an embedding $\phi$, the label of a vertex $v\in Q_n$ is given by $A_n(\phi(v))$. 
\end{itemize}
\end{theorem}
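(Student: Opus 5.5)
Theorem~\ref{thm:folded2} follows from Theorems~\ref{thm:main} and~\ref{thm:folded} together with a count of graph embeddings. Write $\Gamma_n$ for $FQ_n$ ($n$ even) or $H\!FQ_n$ ($n$ odd). First fix one magic-faced hypercube $M$ from Theorem~\ref{thm:main}. By Theorem~\ref{thm:folded}, its labeling of the vertex set of $Q_n$ is a magic-faced labeling of $\Gamma_n$; call it $A_n$. This gives existence in part~(a).

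\emph{Part (b), the map.} Given an embedding $\phi:Q_n\hookrightarrow\Gamma_n$ (an injective map on vertices sending edges to edges, hence a bijection since both graphs have $2^n$ vertices), define $M_\phi(v)=A_n(\phi(v))$. Every $4$-cycle of $Q_n$ maps to a $4$-cycle of $\Gamma_n$, so $M_\phi$ is magic-faced. The map $\phi\mapsto M_\phi$ is injective because $A_n$ is a bijection onto $\{1,\dots,2^n\}$.

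\emph{Part (b), surjectivity.} Surjectivity is a counting argument. By Theorem~\ref{thm:main}, the number of magic-faced hypercubes is $2^n(n+1)!$ for $n$ even and $n\cdot 2^n n!$ for $n$ odd. So it suffices to show the number of embeddings $Q_n\hookrightarrow\Gamma_n$ is at least this. Equivalently, I will show that the spanning subgraphs of $\Gamma_n$ isomorphic to $Q_n$ number $|\mathrm{Emb}|/|\Aut(Q_n)|$, and then match the totals. Concretely:
\begin{itemize}
\item For $n$ even, $FQ_n$ is the Cayley graph of $C_2^n$ with generators $e_1,\dots,e_n,e_1+\cdots+e_n$. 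Any $n$ of these $n+1$ generators form a basis, which gives $n+1$ hypercube subgraphs. Hence there are at least $(n+1)\cdot 2^n n!=2^n(n+1)!$ embeddings.
\item For $n$ odd, $H_jFQ_n$ has generators $e_1,\dots,e_n$ and $\sum_{i\ne j}e_i$. The bases containing $e_j$ number $n$, giving at least $n\cdot 2^n n!$ embeddings.
\end{itemize}
The injection then forces equality and bijectivity, and in passing shows these are all the embeddings.

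The main subtlety is the odd case. There, $H\!FQ_n$ depends on the direction $j$ while $A_n$ has a fixed direction. The count of Theorem~\ref{thm:main}(b) includes all $n$ directions, whereas embeddings into a fixed $H_jFQ_n$ number only $n\cdot 2^n n!$ once reconstituted. To reconcile this, I will check that a hypercube of direction $j'$ is realized via an embedding moving $e_{j'}$ to $e_j$. This works because the spanning hypercubes of $H_jFQ_n$ use generator sets $\{e_j\}\cup(\text{$n-1$ of the other $n$ generators})$, and there are exactly $n$ such sets.

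\emph{Part (a), uniqueness.} If $B$ is another magic-faced labeling of $\Gamma_n$, then restricting to the standard $Q_n\subset\Gamma_n$ gives a magic-faced hypercube. By (b) it equals $A_n\circ\phi$ for some embedding $\phi$. Since $\Gamma_n$ is a Cayley graph, I will verify that every embedding $Q_n\hookrightarrow\Gamma_n$ extends to an automorphism of $\Gamma_n$, namely a translation composed with a linear map permuting the generators. Then $B$ and $A_n\circ\phi$ agree on all vertices, and $B=A_n\circ\phi$ with $\phi\in\Aut(\Gamma_n)$.

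Finally, as a consistency check I will confirm $|\Aut(FQ_n)|=2^n(n+1)!$ for $n$ even and $|\Aut(H\!FQ_n)|=2^n\cdot n!\cdot 2$ for $n$ odd. The latter order matches $G_{n,j}$ times the translation part, consistent with Theorem~\ref{thm:main}. I expect this automorphism computation, which includes excluding exotic automorphisms for small $n$ such as $FQ_2=K_4$, to be the fussiest step.
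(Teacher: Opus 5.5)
\textbf{The gap: uniqueness in part (a) for odd $n$.} The step that fails is your claim that every embedding $Q_n\hookrightarrow H_jFQ_n$ is an automorphism of $H_jFQ_n$. This is false.

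Write $\phi=t\circ L$, $f_j:=\sum_{i\ne j}e_i$, $G:=\{e_i\}_{i\neq j}\cup\{f_j\}$, and let $h\in G$ be the generator missing from the image basis.
\begin{itemize}
\item If $L(e_j)=e_j$, then $L(f_j)=h$. So $L$ permutes $\{e_j\}\cup G$, and $\phi$ is an automorphism.
\item If $L(e_{j'})=e_j$ with $j'\neq j$, then $L(f_j)=e_j+h+L(e_j)$. This is not a generator, so $\phi$ does not send the antipodal edges inside the two slices to edges. These are exactly the embeddings your ``reconciliation'' paragraph uses to realize hypercubes of direction $j'$.
\end{itemize}
The counts confirm this. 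There are $n\cdot 2^n n!$ embeddings but only $|\Aut(H_jFQ_n)|=|\Aut(K_2)|\cdot|\Aut(FQ_{n-1})|=2^n n!$ automorphisms. This is why the paper says $\Aut(H\!FQ_n)$ has $n$ orbits on embeddings. Your consistency-check value $2^n\cdot n!\cdot 2$ double-counts translations, which $G_{n,j}$ already contains.

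Consequently, restricting $B$ to $Q_n$ discards exactly the information you need. You must use that $B$ is magic on the crossing squares $w,\,w+f_j,\,w+f_j+e_j,\,w+e_j$. Suppose $g:=L(e_j)\neq e_j$, and put $u:=\phi(w)$.
\begin{itemize}
\item The rung pairs of $A_n$ are complementary, i.e.\ $A_n(v)+A_n(v+e_j)=2^n+1$. Indeed, the squares $v,v+x,v+x+e_j,v+e_j$ with $x\in G$ give $s(v+x)=2(2^n+1)-s(v)$ for $s(v):=A_n(v)+A_n(v+e_j)$. Since $\sum_{x\in G}x=0$ and $|G|=n$ is odd, $s$ is constant.
\item Using this and the square $u,\,u+g,\,u+g+h,\,u+h$ of $H_jFQ_n$, the crossing square has $B$-sum $2\bigl(A_n(u)+A_n(u+g)\bigr)$.
\item Magicness then forces $A_n(u+g)=A_n(u+e_j)$ for all $u$, contradicting injectivity.
\end{itemize}
This is the ``only if $b_j=0$'' half of the computation preceding Theorem~\ref{thm:extra}. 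Hence $L(e_j)=e_j$ and $\phi\in\Aut(H_jFQ_n)$.

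\textbf{Circularity and comparison with the paper.} Your route inverts the paper's. The paper proves (a) directly in Section~3:
\begin{itemize}
\item After translating the entry $1$ to the origin, Lemma~\ref{lemma1} (resp.\ Lemma~\ref{lemma2}) forces $\{b_0,\dots,b_n\}=\{0,1,2,\dots,2^{n-1}\}$.
\item Rearranging the $b_i$ (resp.\ doing so while keeping $b_j=0$) is exactly the action of the stabilizer of the origin in $\Aut(FQ_n)$ (resp.\ $\Aut(H_jFQ_n)$).
\item Only afterwards does the paper read off the cardinalities in Theorem~\ref{thm:main} from Theorem~\ref{thm:folded2}.
\end{itemize}
So quoting those cardinalities as input is circular. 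You must first derive them directly from Theorem~\ref{thm:coordinates} and the two lemmas. This is easy: $a_{\{i\}}$ determines $b_i$, so distinct arrangements give distinct hypercubes. That yields $2^n(n+1)!$ for even $n$, and $2^n\cdot n\cdot n!$ for odd $n$ since $b_0\neq 0$.

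Once that is done, your injection-plus-lower-bound argument for (b) is correct, as is your uniqueness argument for even $n$. Together they give a clean alternative packaging of the result. Note also that uniqueness never needs the exact order of $\Aut(\Gamma_n)$, so no analysis of exotic automorphisms is required. You only need the relevant embeddings to be automorphisms.
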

\noindent
Theorem~\ref{thm:folded2} states that, up to automorphisms of the underlying graph, there is a unique magic-faced folded hypercube $A_n$ of dimension $n$ when $n$ is even, and a unique magic-faced hierarchical folded hypercube $A_n$ when $n$ is odd.  Moreover, every magic-faced hypercube of dimension $n$ is obtained by taking a hypercube graph $Q_n$ inside $A_n$ (with the attached~labels). 

\pagebreak 
Because all embeddings $Q_n\hookrightarrow FQ_n$ are related to one another via postcomposition with an element of $\Aut(FQ_n)\cong C_2^n \rtimes S_{n+1}$, this recovers $G_n$ as given in Theorem~\ref{thm:main} when $n$ is~even.  Similarly, the action of $\Aut(H\!FQ_n)\cong \bigl(C_2^{n-1}\rtimes S_n\bigr)\times C_2$ on the set of embeddings $Q_n\hookrightarrow H\!FQ_n$ via postcomposition breaks up into $n$ orbits (corresponding to the direction~$j$ along which we break up $Q_n$ into two $Q_{n-1}$'s); this gives the description of $G_n$ and $G_{n,j}$ as~in Theorem~\ref{thm:main} when $n$ is odd.  Since $\Aut(FQ_n)=2^{n}(n+1)!$ and $\Aut(HFQ_n)=2^{n}n!,$ 
this yields the cardinalities for the number of magic hypercubes of dimension $n$ for each $n$ as stated in~Theorem~\ref{thm:main}. 

\vspace{.055in}
\begin{remark}
The proof of Theorem~\ref{thm:folded2} shows that there are no magic-faced folded hypercubes when $n>1$ is odd, and no magic-faced hierarchical folded hypercubes when $n>2$ is~even. Thus ``folded" and ``hierarchical folded" form natural conditions for $n$ even and $n$ odd, respectively, but not vice versa!
\end{remark}

\vspace{-.005in}
\begin{remark}
There are two alternative descriptions of the folded hypercube graph $FQ_n$ that make evident its $S_{n+1}$-symmetry.
The first is as the $(n+1)$-cube graph $Q_{n+1}$---with coordinate directions labelled $0,1,\ldots,n$---where pairs of vertices symmetrically opposite the center (i.e.,~antipodal pairs) are identified. 
The second is more algebraic: the set of vertices is taken to be $\mathbb F_2^{n+1}/\langle e_0+\cdots+e_n\rangle$, where $e_0,\ldots,e_n$ is the natural coordinate basis of $\mathbb F_2^{n+1}$; two vertices $v$ and $w$ are joined by an edge if $v-w\in\{[e_0],\ldots,[e_n]\}$. In both cases, $S_{n+1}$ permutes the coordinate directions $0,1,\ldots,n$, making the $S_{n+1}$-symmetry of $FQ_n$ evident.
\end{remark}

\subsection*{\normalsize Associative Magic-Faced Hypercubes}

Common in the literature on magic hypercubes is the notion of ``associative". An {\it associative} magic hypercube is one in which any two numbers that are symmetrically opposite the center of the hypercube have the same~sum.  Classifying associative magic-faced hypercubes will turn out to be important in the~applications.

In an associative magic-faced hypercube of dimension~$n$, any antipodal pair of numbers must have sum $2^n+1$, i.e., twice the average of the labels~$1,\ldots,2^n$. Theorem~\ref{thm:main}  states that, if $n$ is odd, then a magic-faced hypercube of dimension $n$ has a coordinate direction $j\in\{1,\ldots,n\}$ such that every pair of numbers on a line in that direction sums to~$2^n+1$; hence it cannot be associative.  

In the case of even $n$, we analogously prove that a magic-faced hypercube of dimension~$n$ has a coordinate direction $j\in\{0,\ldots,n\}$ such that every pair of numbers on a line in that direction sums to~$2^n+1$ (where $j=0$ corresponds to the case of antipodal pairs). The group $S_{n+1}\subset G_n$ acts naturally on these $n+1$ possibilities.  Thus we obtain the following theorem: 
\begin{theorem}\label{thm:associative}
If $n$ is even, then there exists an associative magic-faced  hypercube of dimension~$n$ which is unique up to the symmetry group $\Aut(Q_n)\cong C_2^n \rtimes S_{n}$ of the hypercube. If~$n$~is~odd, then no associative magic-faced hypercube of dimension $n$ exists.
\end{theorem}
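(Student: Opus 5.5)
For odd $n$ we use Theorem~\ref{thm:main}(b). In any magic-faced hypercube of odd dimension $n$ there is a direction $j$ such that every line in direction $j$ has pair sum $2^n+1$. Now suppose the hypercube were also associative. Take any vertex $v$ and let $v'=v+e_j$ be its neighbour in direction $j$. Then $A(v)+A(v')=2^n+1$, and associativity gives $A(v)+A(\bar v)=2^n+1$, where $\bar v=v+(1,\dots,1)$ is the antipode of $v$. Hence $A(v')=A(\bar v)$. For $n\ge 2$ we have $v'\ne\bar v$, so two distinct vertices carry the same label, which contradicts injectivity. For $n=1$ the direction-$j$ line is exactly the antipodal pair, so the labeling $1,2$ would be associative. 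The statement as written therefore has to be read for odd $n\ge3$, or the case $n=1$ has to be handled separately; I would flag this boundary case.

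For even $n$ the plan rests on Theorem~\ref{thm:folded2} and on the claim just before the theorem: a magic-faced hypercube has a distinguished direction $j\in\{0,\dots,n\}$ whose pairs sum to $2^n+1$, with $j=0$ meaning antipodal pairs. I would prove this claim from the model $FQ_n=Q_{n+1}/\pm$, with directions $0,\dots,n$ and $A_n$ the labeling of Theorem~\ref{thm:folded2}(a). First I would check on the explicit construction of $A_n$ that exactly one of the $n+1$ directions of $FQ_n$ pairs labels to sum $2^n+1$. As a sanity check, in dimension $4$ the antipodal pairs $(1,16),(14,3),(7,10)$ of the displayed tesseract do. Transporting this along embeddings $\phi:Q_n\hookrightarrow FQ_n$ gives the claim. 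An embedding uses $n$ of the $n+1$ edge classes of $FQ_n$ as the coordinate directions of $Q_n$. The remaining class then corresponds to antipodality in $Q_n$, because the sum of all $n+1$ generators vanishes in $\mathbb F_2^{n+1}/\langle e_0+\cdots+e_n\rangle$.

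Next I would count. By Theorem~\ref{thm:folded2}(b), magic-faced hypercubes correspond to embeddings $\phi$, and there are $|\Aut(FQ_n)|=2^n(n+1)!$ of them. Such a hypercube is associative exactly when the special class of $A_n$ is the class omitted by $\phi$. Since $S_{n+1}$ permutes the $n+1$ classes transitively, this singles out a fraction $1/(n+1)$ of the embeddings, namely $2^n n!$ associative hypercubes. So associative ones exist. Finally, $\Aut(Q_n)\cong C_2^n\rtimes S_n$ acts on the associative ones by precomposition, since its elements preserve antipodality. This action is free because labels are distinct, so the only automorphism fixing a labeling is the identity. As $|\Aut(Q_n)|=2^n n!$, the action is transitive, which gives uniqueness up to $\Aut(Q_n)$.

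The main obstacle is the existence and uniqueness of the special direction in $A_n$ for even $n$. I would try to read it off directly from the explicit construction of $A_n$ promised in the paper. It may be an additive description, roughly a label built as $1+\sum 2^{i}x_i$ with suitably twisted bits, in which complementing the relevant coordinate sends a label $a$ to $2^n+1-a$. The key point is to show that the other $n$ directions do not have this property, which is needed for the count of exactly $1/(n+1)$ of the embeddings.
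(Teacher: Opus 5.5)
\textbf{Gap in the even case.} Your reduction of the even case to a property of $A_n$ is correct, but the proposal stops exactly at the step that carries the content. You never prove that exactly one of the $n+1$ edge classes of $FQ_n$ has all its endpoint pairs summing to $2^n+1$. Existence of such a class is what produces an associative cube. Uniqueness is what makes your count $2^n n!$ correct, and hence what makes the free $\Aut(Q_n)$-action transitive; with two special classes you would get two orbits.

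You do not need to guess a binary description; the coordinates already in the paper settle it. By Theorem~\ref{thm:coordinates} and Lemma~\ref{lemma1}, $A_n=A(1,b_0,\dots,b_n)$ with $\{b_0,\dots,b_n\}=\{0,1,2,\dots,2^{n-1}\}$. Formula~(\ref{mainasformula}) then shows that the two labels on an edge of class $k$ sum to $2^n+1-b_k$ or $2^n+1+b_k$ according to whether $|S|$ is even or odd. Here, for $k\ge1$, the edge is $\{S,S\cup\{k\}\}$ with $k\notin S$; for $k=0$ it is $\{S,\overline S\}$. Both parities occur once $n\ge2$. So class $k$ is special if and only if $b_k=0$, and exactly one $b_k$ vanishes. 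Your ``twisted bits'' guess is precisely what this formula gives when the special class is labelled $0$.

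\textbf{Comparison with the paper.} With that filled in, your even-case argument is complete. It is a precise version of the paper's first, sketched argument (remove the special matching from $A_n$ and invoke Theorem~\ref{thm:folded2}(b)), with transitivity obtained from freeness plus counting.

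The paper's actual proof is just the case $k=0$ of the computation above. The identity $a_S+a_{\overline S}=2^n+1\mp b_0$ forces $b_0=0$. The cubes $A(1,0,b_1,\dots,b_n)$ with $\{b_1,\dots,b_n\}=\{1,\dots,2^{n-1}\}$ then visibly form one $\Aut(Q_n)$-orbit: $C_2^n$ moves the entry $1$, and $S_n$ permutes $b_1,\dots,b_n$. That route needs only Theorem~\ref{thm:coordinates} and Lemma~\ref{lemma1}. Yours additionally leans on the full bijection of Theorem~\ref{thm:folded2}(b), including that every embedding $Q_n\hookrightarrow FQ_n$ is affine. In exchange it explains associativity conceptually, as ``the embedding omits the special class.''

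\textbf{Odd case and $n=1$.} Your odd case is essentially the paper's argument from its introduction: the distinguished direction $j$ of Theorem~\ref{thm:main}(b) forces $A(v+e_j)=A(\overline v)$. Your caveat is correct. For $n=1$ the labelling $(1,2)$ is vacuously magic-faced and associative, so the odd half of the statement holds only for odd $n\ge3$. The paper's proof carries the same hidden restriction: Lemma~\ref{lemma2} is stated for $n\ge3$, and the identity $|a_S-a_{\overline S}|=|b_0|$ gives no contradiction when $n=1$.
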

\noindent
In Figure~\ref{mfh}, in the cases of even dimension, the representative magic-faced hypercubes illustrated were indeed chosen to be associative; in the dim 2 case, pairs of antipodal entries all sum to 5, while in the dim 4 case, they all sum to 17. 
 
 Theorem~\ref{thm:associative} has important consequences for Khajuraho ``most-perfect" magic squares, cubes, and hypercubes, which we discuss in the next subsection.

\subsection*{\normalsize The Khajuraho Magic Square and a Question of Coxeter}
\label{subsec:khajuraho}

One of the original motivations for this work was to understand the existence of, and the remarkable and surprising properties enjoyed by, the first $4\times4$ magic square ever recorded in history: the \emph{Khajuraho magic square}, inscribed on the wall of the Parshvanath Jain temple at Khajuraho, India (10th century CE) \cite{DuttaSingh2019,BallCoxeter}---and to understand a question of Professor H.\,S.\,M.\ Coxeter regarding the existence and structure of such squares \cite{BallCoxeter}.

\begin{figure}[htbp]
\centering
\begin{subfigure}[b]{0.36\textwidth}
\centering
\includegraphics[width=3.6cm]{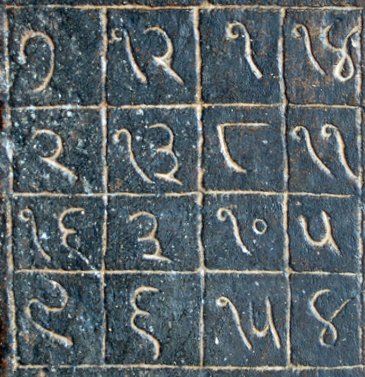}
\end{subfigure}
\hspace{0.01\textwidth}
\begin{subfigure}[b]{0.36\textwidth}
\centering
\begin{tikzpicture}[scale=0.7225]
\def\cs{1.25}
\draw[line width=0.9pt,xstep=\cs,ystep=\cs] (0,0) grid (4*\cs,4*\cs);
\node at (0.5*\cs,3.5*\cs) {\Large 7};
\node at (1.5*\cs,3.5*\cs) {\Large 12};
\node at (2.5*\cs,3.5*\cs) {\Large 1};
\node at (3.5*\cs,3.5*\cs) {\Large 14};
\node at (0.5*\cs,2.5*\cs) {\Large 2};
\node at (1.5*\cs,2.5*\cs) {\Large 13};
\node at (2.5*\cs,2.5*\cs) {\Large 8};
\node at (3.5*\cs,2.5*\cs) {\Large 11};
\node at (0.5*\cs,1.5*\cs) {\Large 16};
\node at (1.5*\cs,1.5*\cs) {\Large 3};
\node at (2.5*\cs,1.5*\cs) {\Large 10};
\node at (3.5*\cs,1.5*\cs) {\Large 5};
\node at (0.5*\cs,0.5*\cs) {\Large 9};
\node at (1.5*\cs,0.5*\cs) {\Large 6};
\node at (2.5*\cs,0.5*\cs) {\Large 15};
\node at (3.5*\cs,0.5*\cs) {\Large 4};
\end{tikzpicture}
\end{subfigure}
\caption{The Khajuraho Magic Square: the original inscription of the magic square (left), constructed over 1000 years ago, alongside the same square transcribed into the modern international form of Indian numerals (right).}
\label{fig:khajuraho}
\end{figure}

\vspace{-.05in}
In this magic square, not only does every row, column, and diagonal sum to the same magic constant $34$, but so does every $2\times2$ subsquare; in fact, every $2\times2$ subsquare does so even when allowing ``wrapping around,'' i.e., viewing the $4\times4$ square as a $4\times4$ toroidal grid $C_4\times C_4$.

How many such magic squares are there, i.e., for which every $2\times2$ subsquare (equivalently, every $2\times2$ subsquare of the square viewed as a torus) also sums to the magic constant $34$?

We approach this problem using Theorems~\ref{thm:main}--\ref{thm:folded2} by noting that there is an exceptional graph isomorphism between the toroidal $4\times4$ grid graph $C_4\times C_4$ and the tesseract graph~$Q_4$. 
Indeed, the cyclic graph $C_4$ on~$4$~vertices is isomorphic, as a graph, to $K_2\times K_2$; hence the toroidal $4\times4$ grid graph $C_4\times C_4$ is isomorphic to $K_2\times K_2\times K_2\times K_2$, i.e., to the tesseract graph~$Q_4$. Under this graph isomorphism, the eight orthogonal lines and sixteen $2\times2$ subsquares  on the toroidal $4\times 4$ graph correspond exactly to the twenty-four $2\times2$ faces ($4$-cycles) of the tesseract graph.  Hence a $4\times4$ toroidal magic square---all of whose rows, columns, and $2\times2$ subsquares sum to the same magic constant---is equivalent to a magic-faced tesseract!

\begin{figure}[htbp]
\centering
\begin{tikzpicture}[every path/.style={draw=red}, every node/.style={text=blue}, scale=0.6154]
\useasboundingbox[draw=none] (0,-1.7) rectangle (19,7.8);
\def\gs{2.6}
\foreach \r in {0,1,2,3}{
  \pgfmathsetmacro{\y}{(3-\r)*\gs}
  \draw (0,\y) -- (1*\gs,\y);
  \draw (1*\gs,\y) -- (2*\gs,\y);
  \draw (2*\gs,\y) -- (3*\gs,\y);
}
\foreach \c in {0,1,2,3}{
  \pgfmathsetmacro{\x}{\c*\gs}
  \draw (\x,0) -- (\x,1*\gs);
  \draw (\x,1*\gs) -- (\x,2*\gs);
  \draw (\x,2*\gs) -- (\x,3*\gs);
}
\foreach \r in {0,1,2,3}{
  \pgfmathsetmacro{\y}{(3-\r)*\gs}
  \draw (3*\gs,\y) .. controls (3*\gs+1.6,\y+1.0) and (-1.6,\y+1.0) .. (0,\y);
}
\draw (0,0) .. controls (-1.0,-1.6) and (-1.0,3*\gs+1.6) .. (0,3*\gs);
\draw (1*\gs,0) .. controls (1*\gs-1.0,-0.9) and (1*\gs-1.0,3*\gs+1.6) .. (1*\gs,3*\gs);
\draw (2*\gs,0) .. controls (2*\gs-1.0,-1.6) and (2*\gs-1.0,3*\gs+0.9) .. (2*\gs,3*\gs);
\draw (3*\gs,0) .. controls (3*\gs-1.0,-0.9) and (3*\gs-1.0,3*\gs+1.6) .. (3*\gs,3*\gs);
\node[fill=white,inner sep=2pt] at (0,3*\gs) {7};
\node[fill=white,inner sep=2pt] at (1*\gs,3*\gs) {12};
\node[fill=white,inner sep=2pt] at (2*\gs,3*\gs) {1};
\node[fill=white,inner sep=2pt] at (3*\gs,3*\gs) {14};
\node[fill=white,inner sep=2pt] at (0,2*\gs) {2};
\node[fill=white,inner sep=2pt] at (1*\gs,2*\gs) {13};
\node[fill=white,inner sep=2pt] at (2*\gs,2*\gs) {8};
\node[fill=white,inner sep=2pt] at (3*\gs,2*\gs) {11};
\node[fill=white,inner sep=2pt] at (0,1*\gs) {16};
\node[fill=white,inner sep=2pt] at (1*\gs,1*\gs) {3};
\node[fill=white,inner sep=2pt] at (2*\gs,1*\gs) {10};
\node[fill=white,inner sep=2pt] at (3*\gs,1*\gs) {5};
\node[fill=white,inner sep=2pt] at (0,0) {9};
\node[fill=white,inner sep=2pt] at (1*\gs,0) {6};
\node[fill=white,inner sep=2pt] at (2*\gs,0) {15};
\node[fill=white,inner sep=2pt] at (3*\gs,0) {4};
\node[text=black] at (3.9,-1.2) {$C_4\times C_4$};
\node[text=black] at (9.6,3.9) {\Huge $\;\;\cong\;\;$};
\begin{scope}[xshift=11.5cm,scale=0.71]
\draw (0,0) -- (10,0);
\draw (0,0) -- (0,10);
\draw (10,0) -- (10,10);
\draw (0,10) -- (10,10);
\draw (2.5,1.5) -- (8.52,1.5);
\draw (9.08,1.5) -- (9.72,1.5);
\draw (10.28,1.5) -- (12.5,1.5);
\draw (12.5,1.5) -- (12.5,11.5);
\draw (12.5,11.5) -- (2.5,11.5);
\draw (2.5,11.5) -- (2.5,10.28);
\draw (2.5,9.72) -- (2.5,8.306);
\draw (2.5,7.746) -- (2.5,2.912);
\draw (2.5,2.352) -- (2.5,1.5);
\draw (0,0) -- (2.5,1.5);
\draw (10,0) -- (12.5,1.5);
\draw (10,10) -- (12.5,11.5);
\draw (0,10) -- (2.5,11.5);
\draw (3.8,4) -- (6.8,4) -- (6.8,7) -- (3.8,7) -- cycle;
\draw (5.675,5.125) -- (6.52,5.125);
\draw (7.08,5.125) -- (8.675,5.125);
\draw (8.675,5.125) -- (8.675,8.125);
\draw (5.675,8.125) -- (7.72,8.125);
\draw (8.28,8.125) -- (8.675,8.125);
\draw (5.675,8.125) -- (5.675,7.28);
\draw (5.675,6.72) -- (5.675,5.125);
\draw (3.8,4) -- (5.675,5.125);
\draw (6.8,4) -- (8.675,5.125);
\draw (6.8,7) -- (8.675,8.125);
\draw (3.8,7) -- (5.675,8.125);
\draw (0,0) -- (3.8,4);
\draw (10,0) -- (6.8,4);
\draw (10,10) -- (6.8,7);
\draw (0,10) -- (3.8,7);
\draw (2.5,1.5) -- (4.505,3.789);
\draw (4.874,4.211) -- (5.675,5.125);
\draw (12.5,1.5) -- (10.2,3.677);
\draw (9.797,4.062) -- (8.675,5.125);
\draw (12.5,11.5) -- (10.21,9.479);
\draw (9.79,9.109) -- (8.675,8.125);
\draw (2.5,11.5) -- (3.719,10.2);
\draw (4.103,9.796) -- (5.675,8.125);
\node[fill=white,inner sep=2pt] at (0,0) {$1$};
\node[fill=white,inner sep=2pt] at (10,0) {$15$};
\node[fill=white,inner sep=2pt] at (10,10) {$4$};
\node[fill=white,inner sep=2pt] at (0,10) {$14$};
\node[fill=white,inner sep=2pt] at (2.5,1.5) {$12$};
\node[fill=white,inner sep=2pt] at (12.5,1.5) {$6$};
\node[fill=white,inner sep=2pt] at (12.5,11.5) {$9$};
\node[fill=white,inner sep=2pt] at (2.5,11.5) {$7$};
\node[fill=white,inner sep=2pt] at (3.8,4) {$8$};
\node[fill=white,inner sep=2pt] at (6.8,4) {$10$};
\node[fill=white,inner sep=2pt] at (6.8,7) {$5$};
\node[fill=white,inner sep=2pt] at (3.8,7) {$11$};
\node[fill=white,inner sep=2pt] at (5.675,5.125) {$13$};
\node[fill=white,inner sep=2pt] at (8.675,5.125) {$3$};
\node[fill=white,inner sep=2pt] at (8.675,8.125) {$16$};
\node[fill=white,inner sep=2pt] at (5.675,8.125) {$2$};
\end{scope}
\node[text=black] at (15.25,-1.2) {$Q_4$};
\end{tikzpicture}

\caption{An explicit graph isomorphism $C_4\times C_4\cong Q_4$: matching vertex labels correspond under the isomorphism. Left, $C_4\times C_4$ drawn as a toroidal $4\times4$ grid and labelled by the Khajuraho square of Figure~\ref{fig:khajuraho} (row/column wrap-around edges drawn as arcs); right, $Q_4$ drawn exactly as in the $n=4$ case of Figure~\ref{mfh}. This isomorphism carries the Khajuraho square to a magic-faced tesseract.}
\label{fig:isomorphism}
\vspace{-.1in}
\end{figure}
Figure~\ref{fig:isomorphism} makes this isomorphism explicit: the vertices of $C_4\times C_4$, drawn as a $4\times4$ toroidal graph and labelled by the entries of the Khajuraho magic square, correspond to the vertices of $Q_4$ drawn exactly as in the $n=4$ case of Figure~\ref{mfh}. Every edge of one graph maps to an edge of the other, so this labelling exhibits a  graph isomorphism $C_4\times C_4\cong Q_4$ under which the Khajuraho magic square is transformed into the standard magic-faced tesseract in Figure~\ref{mfh}.

By Theorems~\ref{thm:main}--\ref{thm:folded2}, we conclude: 

\begin{theorem}
\label{thm:1920}
There are $2^4\cdot 5!=1920$ magic-faced tesseracts, and hence $1920$ arrangements of $1,\dots,16$ in a $4\times4$ square such that every row, column, and $2\times2$ subsquare---including those that wrap around---sums to the magic constant $34$. These are all equivalent to each other under a natural, faithful action of the group $C_2^4 \rtimes S_{5}$ of order $1920$. 
\end{theorem}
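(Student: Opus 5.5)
Theorem~\ref{thm:1920} follows by combining the count in Theorem~\ref{thm:main} (case $n=4$) with the graph isomorphism $\psi\colon C_4\times C_4\xrightarrow{\sim} Q_4$ of Figure~\ref{fig:isomorphism}. First, Theorem~\ref{thm:main}(a) with $n=4$ gives exactly $2^4\cdot 5!=1920$ magic-faced tesseracts, forming a single orbit of $G_4\cong C_2^4\rtimes S_5$. Equivalently, by Theorem~\ref{thm:folded2}, they correspond bijectively to embeddings $Q_4\hookrightarrow FQ_4$ composed with the fixed labeling $A_4$. Those embeddings form a torsor under $\Aut(FQ_4)$: any embedding of $Q_4$ into $FQ_4$ is a spanning subgraph obtained by deleting one perfect matching class of edges, and $\Aut(FQ_4)$ acts simply transitively on such pairs. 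So this action is free and transitive, and hence faithful, of order $1920$.

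Second, I will verify that $\psi$ induces a bijection between the $4$-cycles of $C_4\times C_4$ and those of $Q_4$. Write $C_4\cong K_2\times K_2$ via the Gray code $0,1,3,2$ (that is, $00,01,11,10$). A $4$-cycle in $C_4\times C_4$ is one of three kinds:
\begin{itemize}
\item a full row, which lies in a copy of $C_4$ in one factor and becomes a $2$-face in the two coordinates of that factor;
\item a full column, handled in the same way;
\item a $2\times2$ subsquare (possibly wrapping), which is the product of one edge from each factor and becomes a $2$-face spanned by one coordinate from each pair.
\end{itemize}
The counts are $4+4+16=24=\binom42\cdot 4$. The main point to check is that $C_4\times C_4$ has no other $4$-cycles. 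This holds because $C_4\times C_4$ has girth $4$, and its $4$-cycles are exactly the above, since $\psi$ is an isomorphism onto $Q_4$, whose $4$-cycles are exactly its $24$ faces. So the check reduces to the face count in $Q_4$, and nothing is lost.

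Third, translate the labelings. A labeling of the $4\times4$ grid by $1,\dots,16$ in which every row, column, and toroidal $2\times2$ subsquare sums to a common constant is exactly a labeling of $C_4\times C_4$ with constant sum on all $4$-cycles. Via $\psi$ this is exactly a magic-faced tesseract, and the constant is forced to be $34=2(2^4+1)$. Hence there are $1920$ such squares. Transporting the action $\sigma\mapsto \psi^{-1}\circ\sigma\circ\psi$ gives a faithful, transitive action of $C_2^4\rtimes S_5$ on these squares.

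The only real obstacle is the faithfulness assertion. I would obtain it from freeness on embeddings, since the stabilizer of a labeling must fix $A_4\circ\phi$, and distinct labels force a trivial stabilizer. Concretely, $|\text{orbit}|=1920=|G_4|$ forces all stabilizers to be trivial.
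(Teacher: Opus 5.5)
Your proposal is correct and follows the paper's route. Like the paper, you use the isomorphism $C_4\times C_4\cong Q_4$ to identify the $4+4+16$ rows, columns, and wrap-around $2\times2$ subsquares with the $24$ faces of $Q_4$, and then read off both the count $1920$ and the single $C_2^4\rtimes S_5$-orbit from Theorem~\ref{thm:main}(a) with $n=4$. Your orbit--stabilizer observation ($1920$ objects in one orbit of a group of order $1920$) makes the faithfulness explicit, and it also means you do not need your unproved side claim that every spanning $Q_4$ in $FQ_4$ comes from deleting a single direction class.
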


The Khajuraho Square has a further special property, however: every diagonal, and indeed even every \emph{pandiagonal} (i.e., wrap-around, or ``broken'' diagonal) of the $4\times4$ square also sums to the magic constant 34!  In fact, it is known that, for $4\times 4$ magic squares, this ``pandiagonality" property implies another,  stronger property called ``completeness": every pair of  entries two apart on any diagonal or pandiagonal adds up to the same constant 17 (i.e., to half the magic constant). For example, in the Khajuraho Magic Square, note that the (1,1) and (3,3) entries sum to $7+10 = 17$, as do the entries (2,4) and (4,2): $11+6=17$. 

How many $4\times4$ magic squares are there for which, as with the Khajuraho Magic Square, every row, column, and $2\times2$ subsquare, as well as every diagonal and pandiagonal, sums to the same magic constant? Such squares are called \emph{most-perfect} \cite{OllerenshawBree1998}. It is known that there are $384$ of them. They were all explicitly constructed by Nārāyaṇa Paṇḍita in his \emph{Ga\d nita Kaumud\={\i}} in the year 1356, using his ``knight's-move algorithm" \cite{Narayana1356,DuttaSingh2019,SridharanSrinivas2012}. The count of $384$ was rigorously proven to be exact by Rosser and Walker \cite{RosserWalker1938}. 
Coxeter was intrigued by the number 384, recognizing it as the order of the hyperoctahedral group $W(B_4)$, and asked whether a group of order $384$ permuting these squares might be naturally isomorphic to $W(B_4)$.  This was formally verified to be the case by Rosser and Walker in~\cite{RosserWalker1938} (see also Vijayaraghavan~\cite{Vijayaraghavan1941}, M\"uller~\cite{Muller1997}, and Navas~\cite{Navas2020}). 

However, the exceptional graph isomorphism $C_4\times C_4\cong Q_4$ enables us to make this natural group action evident. First, not all $1920$ magic-faced tesseracts enumerated in Theorem~\ref{thm:1920} lie in a single orbit for the action of $\Aut(Q_4)$: we prove that the magic tesseracts of Theorem~\ref{thm:1920} that yield pandiagonal magic squares under this exceptional graph isomorphism are precisely those that are {\bf associative}. By Theorem~\ref{thm:associative}, the symmetries that preserve the condition of being magic-faced and associative form the subgroup $\Aut(Q_4)\subset \Aut(FQ_4)$, 
i.e., the symmetry group $\Aut(Q_4)\cong W(B_4)$ of the tesseract.

This yields the desired positive answer to Coxeter's question: we view a most-perfect $4\times4$ magic square as an associative magic-faced tesseract, i.e., as a labelling of $Q_4$ where every $4$-cycle has the same sum and every pair of antipodal entries has the same sum. Theorem~\ref{thm:associative} states that there is only one such magic-faced tesseract up to the action of the symmetry group $\Aut(Q_4)\cong W(B_4)$ of the tesseract $Q_4$. Hence there are $|\Aut(Q_4)| = |W(B_4)| = 384$ such squares, answering Coxeter's question in the positive and giving a natural explanation for the number $384$ of most-perfect $4\times4$ magic squares. 

\begin{theorem}
\label{thm:384orbit}
Under the graph isomorphism $C_4\times C_4 \xrightarrow{\sim} Q_4$, the $384$ most-perfect $4\times4$ magic squares form the single orbit of the image of the Khajuraho Magic Square under the faithful action of the group $\Aut_{\mathrm{graph}}(C_4\times C_4)=\Aut(Q_4)\cong W(B_4) = C_2^4\rtimes S_4$ of order $16\times 24=384$.
\end{theorem}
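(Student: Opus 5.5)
The plan is to pass everything through the fixed isomorphism $\psi: C_4\times C_4\xrightarrow{\sim} Q_4$ of Figure~\ref{fig:isomorphism} and reduce the statement to Theorem~\ref{thm:associative} for $n=4$. Two things have to be shown: (i) a labelling of the $4\times 4$ torus is a most-perfect magic square exactly when its transport to $Q_4$ is an associative magic-faced tesseract; (ii) $\Aut(C_4\times C_4)$ corresponds under $\psi$ to $\Aut(Q_4)$ and acts faithfully. Granting these, Theorem~\ref{thm:associative} says the associative magic-faced tesseracts form one $\Aut(Q_4)$-orbit. The image of the Khajuraho square lies in it, as a direct check against Figure~\ref{mfh} confirms (antipodal pairs sum to $17$). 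The action on labellings by $1,\dots,16$ is free, since a graph automorphism fixing an injective labelling is the identity. So the orbit has exactly $|\Aut(Q_4)|=2^4\cdot 4!=384$ elements, and it consists precisely of the most-perfect squares.

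Step (ii) is immediate: a graph isomorphism conjugates automorphism groups, so $\Aut_{\mathrm{graph}}(C_4\times C_4)=\psi^{-1}\Aut(Q_4)\psi\cong C_2^4\rtimes S_4=W(B_4)$. For step (i), the discussion before Theorem~\ref{thm:1920} already shows that rows, columns and wrap-around $2\times 2$ subsquares correspond to the $24$ faces of $Q_4$. It remains to show that, for a magic-faced tesseract, the pandiagonal condition is equivalent to associativity.

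To do this, I would make $\psi$ explicit. Identify $C_4$ with $K_2\times K_2$ through the Gray code $0\mapsto 00$, $1\mapsto 01$, $2\mapsto 11$, $3\mapsto 10$. Then the step $i\mapsto i+2$ flips both bits, so cells two apart along a diagonal, $(r,c)$ and $(r+2,c+2)$ or $(r+2,c-2)$, map to vertices differing in all four coordinates, i.e.\ to antipodal vertices of $Q_4$. Hence associativity (all antipodal pairs sum to $17$) is exactly the ``completeness'' condition: entries two apart on any (pan)diagonal sum to $17$.

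Completeness implies pandiagonality, because each pandiagonal splits into two such pairs and so sums to $34$. For the converse I would use the faces. Take a pandiagonal $a,b,c,d$ at consecutive positions $(r+i,c+i)$. Compare it with the $2\times 2$ subsquares containing the pairs $a,b$ and $c,d$, and with the row and column sums. Together these linear relations force $a+c=b+d=17$; this is the classical fact quoted in the text, and it can be rederived in a few lines from the face relations. Alternatively one can avoid this step: Theorem~\ref{thm:1920} and the structure theorem say every magic-faced tesseract has a unique direction $j\in\{0,\dots,4\}$ of $17$-sum pairs. One then checks directly that when $j\neq 0$, some pandiagonal fails to sum to $34$.

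The main obstacle is this equivalence, pandiagonal $\Leftrightarrow$ associative, for magic-faced tesseracts. I would handle it by the second route, a case check over the $4$ non-antipodal directions up to $S_4\subset\Aut(Q_4)$ symmetry, which reduces to checking a single representative.
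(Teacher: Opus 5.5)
Your overall architecture is the paper's, and most of it is sound: transport along the Gray-code isomorphism, observe that cells two apart on a diagonal are antipodal (so completeness is exactly associativity), apply Theorem~\ref{thm:associative} with $n=4$, and count the orbit by freeness, a point the paper leaves implicit. The gap is in the one step with real content: showing that a magic-faced tesseract whose pandiagonals all sum to $34$ must be associative. Your chosen second route does not establish this.

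First, the reduction ``up to $S_4\subset\Aut(Q_4)$'' is not available. Let bits $1,2$ encode the row and bits $3,4$ the column. Then a pandiagonal is a coset of $\{0,x,\bar x,\mathbf 1\}$, where $x$ takes one bit from each pair. A coordinate swap such as $(2\,3)$ sends $\{0000,1010,0101,1111\}$ to $\{0000,1100,0011,1111\}$. The cosets of the latter are the sets $\{(r,c),(r+2,c),(r,c+2),(r+2,c+2)\}$, which are not pandiagonals. Only the order-$128$ subgroup preserving the pairing $\{\{1,2\},\{3,4\}\}$ preserves pandiagonality a priori; this is the group of translations, rotations and reflections of the torus. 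That subgroup is still transitive on the four directions, so you can reduce to one direction. But even adjoining $v\mapsto 17-v$, it has order only $256$, so it cannot act transitively on the $1536$ non-associative magic-faced tesseracts. A single checked example therefore says nothing about the rest. You need an argument that is uniform over every tesseract with the given direction.

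That uniform argument is what the paper supplies through Theorem~\ref{thm:coordinates}; it is the case $m=2$ of the proof of Theorem~\ref{thm:implications}. A pandiagonal is $\{S,\,S\triangle X,\,S\triangle\overline X,\,\overline S\}$ with $|X|=2$. All four sets have the parity of $|S|$, and each $i\ge1$ lies in exactly two of them. So by (\ref{aSformula}) the sum is $C-2b_0$ or $C+2b_0$ according to that parity; indeed $a+c=b+d=C/2\mp b_0$. Hence pandiagonal $\iff b_0=0\iff$ associative, with no case check at all.

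Your first route is also acceptable if you simply cite the classical fact, but the relations you name cannot derive it. This pandiagonal, the subsquares through the pairs $a,b$ and $c,d$, and the rows and columns together do not span $a+c-b-d$. Write $M_{r,c}$ for the entries, $R_r$ and $K_c$ for the row and column sums, and $Q_{r,c}$ for the sum over rows $r,r+1$ and columns $c,c+1$ (indices mod $4$). What works is the identity
\begin{multline*}
4(M_{0,0}+M_{2,2}-M_{1,1}-M_{3,3})=\sum_r(-1)^rR_r+\sum_c(-1)^cK_c\\
+(Q_{0,3}-Q_{0,1}+Q_{2,1}-Q_{2,3})+(Q_{3,0}-Q_{1,0}+Q_{1,2}-Q_{3,2}),
\end{multline*}
which uses instead the eight subsquares meeting the diagonal in a single cell. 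Its right side vanishes on any compact square. By torus symmetry this gives $a+c=b+d$ on every pandiagonal, and a pandiagonal sum of $34$ then forces $a+c=b+d=17$.
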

\noindent
That is, the Khajuraho Magic Square is the unique most-perfect $4\times 4$ magic square up to the natural, faithful action of the group $\Aut_{\mathrm{graph}}(C_4\times C_4)=\Aut(Q_4)\cong W(B_4)$.

\subsection*{\normalsize Most-perfect Hypercubes in Higher Dimensions, and the Analogue of Coxeter's Question}


Our perspective enables the proof of existence, and indeed the classification and enumeration, of most-perfect magic hypercubes of order 4 in every dimension $m\geq 1$.  
A {\it most-perfect magic hypercube of order $4$ and dimension $m$} is an arrangement of the numbers $1,2,\ldots,4^m$ in a $4\times\cdots \times 4$ ($m$-fold) grid such that every orthogonal line has the same magic sum (i.e., it is {\it magic}), every $2\times 2$ subsquare also has that same magic sum (i.e., it is {\it compact}), every great diagonal and great pandiagonal also has that same magic sum (i.e., it is {\it pandiagonal}), and every pair of elements two entries apart on such a pandiagonal adds up to half the magic~sum (i.e., it is {\it complete}).  It~had not been known previously whether most-perfect magic hypercubes exist
in dimensions greater than~3.  

We prove that most-perfect magic hypercubes of order $4$ exist in every dimension~$m$, and indeed we classify all of them. 
The answer turns out to be quite beautiful: generalizing Coxeter's 1936~query, we prove that, for each $m$, most-perfect hypercubes of order~4 and dimension~$m$ form one  orbit under a natural and faithful action of the Weyl group~$W(B_{2m})$. 

In fact, we prove something more general that explains the structure of these objects more fully.  We classify and enumerate the magic hypercubes of order 4 in each dimension that are: compact; compact and pandiagonal, and compact and complete, i.e., most-perfect. 

We prove the following theorems:

\begin{theorem}
\label{thm:final1}
There exists a unique compact magic hypercube of order $4$ and dimension~$m$ $($i.e., a magic hypercube of order~$4$ and dimension~$m$ with entries $1,\dots,4^m$ such that all orthogonal lines and all $2\times2$ subsquares, including wrap-around $2\times 2$ subsquares, sum~to~the same magic constant$)$ up to the natural faithful action of the group $\Aut(FQ_{2m})\!\cong\! C_2^{2m} \rtimes S_{2m+1}.$ 
\end{theorem}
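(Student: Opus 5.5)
The plan is to reduce Theorem~\ref{thm:final1} to Theorem~\ref{thm:folded2} for $n=2m$, generalizing the exceptional isomorphism $C_4\times C_4\cong Q_4$ used for Theorem~\ref{thm:1920}.

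\emph{Step 1: a graph isomorphism.} Since $C_4\cong K_2\times K_2$, taking $m$-fold products gives
\[
(C_4)^m\;\cong\;(K_2)^{2m}=Q_{2m}.
\]
Concretely, identify $\mathbb Z/4$ with $\mathbb F_2^2$ through a Gray code, $0\mapsto 00$, $1\mapsto 01$, $2\mapsto 11$, $3\mapsto 10$. Under this map adjacent residues go to vectors at Hamming distance one, so the torus grid $(\mathbb Z/4)^m$ becomes the vertex set $\mathbb F_2^{2m}$ of $Q_{2m}$. An arrangement of $1,\dots,4^m$ in the $4\times\cdots\times4$ grid is therefore the same thing as a labelling of $Q_{2m}$ by $1,\dots,2^{2m}$.

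\emph{Step 2: matching the conditions.} I will show that ``compact magic'' on the grid is exactly ``magic-faced'' on $Q_{2m}$. The $4$-cycles of $Q_{2m}$ are its $2$-faces, determined by a pair of coordinate directions among the $2m$. There are two cases.
\begin{itemize}
\item If both directions come from the same $C_4$ factor, the $4$-cycle is an orthogonal line of the grid (a full $C_4$ in that axis).
\item If they come from different factors $i\ne i'$, the $4$-cycle is a wrap-around $2\times2$ subsquare lying in the $(i,i')$-plane.
\end{itemize}
Conversely, every orthogonal line and every wrap-around $2\times 2$ subsquare arises in this way. So the $4$-cycles of $Q_{2m}$ are in bijection with the orthogonal lines together with all wrap-around $2\times2$ subsquares. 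It follows that a compact magic hypercube of order $4$ and dimension $m$ is precisely a magic-faced hypercube of dimension $2m$. Both magic constants equal $2(4^m+1)$, which is automatic once all these sums agree.

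\emph{Step 3: invoking the classification.} Because $2m$ is even, Theorem~\ref{thm:main}(a) and Theorem~\ref{thm:folded2} apply. Magic-faced hypercubes of dimension $2m$ form a single orbit under $G_{2m}\cong\Aut(FQ_{2m})\cong C_2^{2m}\rtimes S_{2m+1}$. The orbit has size $2^{2m}(2m+1)!$, equal to the order of the group, so the action is free and in particular faithful. Transporting this action through the isomorphism of Step 1 gives the claimed natural faithful action on compact magic hypercubes, with exactly one orbit. Existence is supplied by the existence part of Theorem~\ref{thm:main}.

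\emph{Main obstacle.} The step needing the most care is Step 2: checking that the $4$-cycles of $Q_{2m}$ correspond exactly, with no extras and no omissions, to lines and wrap-around subsquares. Since $Q_{2m}$ is bipartite with no triangles, every $4$-cycle spans a $2$-face. What remains is to verify that the Gray-code map sends each such face to the intended grid configuration, and this is routine coordinatewise bookkeeping.
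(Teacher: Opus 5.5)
Your proposal is correct and takes essentially the same route as the paper: transport $M$ through $C_4^m\cong K_2^{2m}\cong Q_{2m}$, observe that orthogonal lines together with wrap-around $2\times2$ subsquares correspond exactly to the $2$-faces of $Q_{2m}$, and then apply Theorem~\ref{thm:main} (equivalently Theorem~\ref{thm:folded2}) in the even dimension $n=2m$. Your explicit Gray-code bookkeeping and your orbit-size argument for faithfulness (orbit size equals group order, so the action is free) fill in details that the paper's very brief proof leaves implicit.
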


\begin{theorem} \label{thm:final2}
There exists a unique most-perfect magic hypercube of order $4$ and dimension~$m$ $($i.e., a magic hypercube of order~$4$ and dimension $m$ with entries $1,\dots,4^m$ such that all orthogonal lines, all great diagonals and  pandiagonals, all $2\times2$ subsquares, including wrap-around $2\times 2$ subsquares, sum to the same magic constant; and all pairs of entries lying two apart on any great pandiagonal sum to half the magic constant$)$, up to the natural, faithful action of the group $W(B_{2m}) \cong \Aut(Q_{2m})\cong \Aut_{\mathrm{graph}}(C_4^m)\cong C_2^{2m}\rtimes S_{2m}.$
\end{theorem}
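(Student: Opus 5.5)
The plan is to reduce Theorem~\ref{thm:final2} to Theorem~\ref{thm:associative}, using the generalized exceptional isomorphism $C_4^m\cong (K_2\times K_2)^m = Q_{2m}$. Label each coordinate of $\mathbb Z/4$ by a pair of bits through the Gray code $0\mapsto 00$, $1\mapsto 01$, $2\mapsto 11$, $3\mapsto 10$. Under this labelling, adjacent cells of $C_4^m$ become adjacent vertices of $Q_{2m}$, and cells two apart in one coordinate become vertices differing in both bits of that coordinate pair.

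\textbf{Step 1 (compact hypercubes are magic-faced).} We first show that a compact magic hypercube of order $4$ and dimension $m$ is exactly a magic-faced labelling of $Q_{2m}$. There are two kinds of $4$-cycles in $Q_{2m}$. A $4$-cycle using two directions from the same coordinate pair is an orthogonal line of length $4$, i.e.\ a $C_4$ factor. A $4$-cycle using directions from two different pairs is a $2\times 2$ wrap-around subsquare in some coordinate plane. Hence magic plus compact is equivalent to magic-faced on $Q_{2m}$. This also proves Theorem~\ref{thm:final1} from Theorem~\ref{thm:folded2}, with group $\Aut(FQ_{2m})$, since $2m$ is even.

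\textbf{Step 2 (most-perfect is equivalent to associative).} The antipode of a vertex of $Q_{2m}$ flips all $2m$ bits. In the grid this is the map $x_i\mapsto x_i+2$ in every coordinate, i.e.\ the move by two steps along any great (pan)diagonal. So ``complete'' says precisely that antipodal labels sum to $4^m+1$, which is associativity. It remains to check two things.
\begin{enumerate}[label=(\roman*)]
\item Completeness implies pandiagonality. A great pandiagonal consists of cells $x, x+d, x+2d, x+3d$ with $d\in\{\pm1\}^m$. Its entries pair as $(x,x+2d)$ and $(x+d,x+3d)$, and each pair is antipodal, so the sum is $2(4^m+1)$, which is the magic constant.
\item Magic-faced together with associative gives an object satisfying all the stated conditions, by (i) and Step~1.
\end{enumerate}
Therefore most-perfect hypercubes correspond exactly to associative magic-faced labellings of $Q_{2m}$.

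\textbf{Step 3 (apply Theorem~\ref{thm:associative}).} Since $n=2m$ is even, Theorem~\ref{thm:associative} gives existence and uniqueness up to $\Aut(Q_{2m})\cong C_2^{2m}\rtimes S_{2m}\cong W(B_{2m})$. The action is faithful, because the labels are distinct and every nontrivial automorphism moves some vertex. We must also identify this group with $\Aut_{\mathrm{graph}}(C_4^m)$. This follows from the graph isomorphism, so it is simply transport of structure. The point to stress is that the group is the full graph automorphism group, not merely the $C_4^m$-geometric symmetries, which form a much smaller group.

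\textbf{Main obstacle.} The step I expect to require the most care is Step~2. First, the translation of ``two apart on a great pandiagonal'' into ``antipodal in $Q_{2m}$'' must be exact for every diagonal direction $d\in\{\pm1\}^m$ and every starting cell. It does hold, since $+2\equiv -2 \pmod 4$ in each coordinate regardless of sign. Second, the paper's definition of ``most-perfect'' requires both pandiagonality and completeness. I will confirm that no further condition is hidden in it that fails to follow from associativity, using the pairing argument in (i).
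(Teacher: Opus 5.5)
Your proposal is correct and follows the same route as the paper. You transport along $C_4^m\cong Q_{2m}$ so that magic plus compact becomes magic-faced, identify completeness with associativity (which also forces pandiagonality), and then invoke Theorem~\ref{thm:associative} with $n=2m$; your Gray-code labelling, pairing argument for pandiagonality, and faithfulness remark (the action is in fact free, since the labels are distinct) just spell out details the paper states in a line.
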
 
\noindent
Theorems~\ref{thm:final1} and \ref{thm:final2} form the generalizations of Theorems~\ref{thm:1920} and \ref{thm:384orbit} to arbitrary  dimensions, and Theorem~\ref{thm:final2} answers the analogue of Coxeter's question for general dimensions~$m$. \pagebreak

Theorems~\ref{thm:final1} and \ref{thm:final2} not only show existence but also give us the exact count of compact and of most-perfect  magic hypercubes of order $4$ in each dimension~$m$. 

\begin{corollary}\label{thm:countmostperfect}
\phantom{hello.}
\vspace{-.055in}

\begin{itemize}
\item[{\em (a)}] 
The number of 
compact magic hypercubes of order $4$ and dimension $m$ is $4^m(2m\!+\!1)!$. 

\vspace{-.025in}
\item[{\em (b)}] 
The number of 
most-perfect magic hypercubes of order $4$ and dimension $m$ is 
$4^m(2m)!$. 
\end{itemize}
\end{corollary}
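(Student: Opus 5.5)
The plan is to deduce both counts from Theorems~\ref{thm:final1} and~\ref{thm:final2} by the orbit--stabilizer theorem. Each of those theorems says that the relevant set is a single orbit under a group action that is \emph{faithful}. To get the cardinality we also need the stabilizer of a point (say the hypercube coming from the standard magic-faced hypercube $A_{2m}$) to be trivial, so that the orbit is in bijection with the group.

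For (a), first identify, via the exceptional isomorphism $C_4^m\cong Q_{2m}$ (from $C_4\cong K_2\times K_2$, extended coordinatewise), the compact magic hypercubes of order $4$ and dimension $m$ with magic-faced hypercubes of dimension $2m$. Orthogonal lines of $C_4^m$ and $2\times2$ (wrap-around) subsquares correspond exactly to the $4$-cycles of $Q_{2m}$, just as in the $m=2$ discussion preceding Theorem~\ref{thm:1920}. This reduces (a) to counting magic-faced hypercubes of even dimension $n=2m$. By Theorem~\ref{thm:main}(a), that count is $2^{2m}(2m+1)!=4^m(2m+1)!$.

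Equivalently, one can use Theorem~\ref{thm:folded2}(b). Magic-faced hypercubes correspond bijectively to embeddings $Q_{2m}\hookrightarrow FQ_{2m}$, and these form a torsor under $\Aut(FQ_{2m})$, a group of order $2^{2m}(2m+1)!$. The torsor structure is what guarantees the trivial stabilizer: distinct embeddings give distinct labelings because the labels $1,\dots,4^m$ of $A_{2m}$ are distinct, so an embedding is recovered from its labeling.

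For (b), combine this with Theorem~\ref{thm:associative}. The key step is to show that, under $C_4^m\cong Q_{2m}$, the most-perfect hypercubes are exactly the compact ones whose associated magic-faced hypercube is associative. This is the $m$-dimensional analogue of the claim made for $m=2$ before Theorem~\ref{thm:384orbit}, and Theorem~\ref{thm:final2} already packages it. Given this, Theorem~\ref{thm:associative} says the associative magic-faced hypercubes of dimension $2m$ form one orbit of $\Aut(Q_{2m})$. The stabilizer is trivial, since a nontrivial automorphism of $Q_{2m}$ moves some vertex and hence changes the label there, the labels being distinct. So the count is $|\Aut(Q_{2m})|=2^{2m}(2m)!=4^m(2m)!$.

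The only point needing care is freeness of the action. Faithfulness alone, as stated in Theorems~\ref{thm:final1}--\ref{thm:final2}, does not imply a trivial stabilizer. In both cases, however, the group acts by permuting positions of a labeling with distinct entries, and a nonidentity permutation of positions changes such a labeling. So the stabilizer is trivial, and the counts follow.
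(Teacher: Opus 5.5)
Your proposal is correct and follows essentially the paper's route. The paper transports compactness to the magic-faced condition, and completeness to associativity, via $C_4^m\cong Q_{2m}$, and then reads off the counts $|\Aut(FQ_{2m})|=4^m(2m+1)!$ and $|\Aut(Q_{2m})|=4^m(2m)!$ from Theorems~\ref{thm:main} and~\ref{thm:associative}. Your explicit check that these actions are free, and not merely faithful (because the labels are distinct), is a worthwhile clarification that the paper leaves implicit.
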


We note that Theorems~\ref{thm:final1}--\ref{thm:final2} and Corollary~\ref{thm:countmostperfect} also classify and enumerate the magic hypercubes of order 4 that are compact and pandiagonal, due to the following theorem. 

\begin{theorem}\label{thm:implications}
If $m$ is odd, then a compact magic hypercube of order $4$ is  automatically pandiagonal.  If $m$ is even, then a compact and pandiagonal magic hypercube of order $4$ is automatically most-perfect. 
\end{theorem}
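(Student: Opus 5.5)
The plan is to translate everything to the tesseract-type picture via the exceptional isomorphism $C_4^m\cong Q_{2m}$, and then deduce both implications from Theorem~\ref{thm:folded} with $n=2m$ even. Identify each factor $\mathbb Z/4$ with $\mathbb F_2^2$ via the Gray code $0\mapsto 00$, $1\mapsto 01$, $2\mapsto 11$, $3\mapsto 10$. Each $C_4$ then becomes $K_2\times K_2$, and $C_4^m$ becomes $Q_{2m}$ on the vertex set $\mathbb F_2^{2m}$. Under this identification the $2$-faces of $Q_{2m}$ are of two kinds: faces using two bits from the same factor are exactly the orthogonal lines of the order-$4$ hypercube, and faces using bits from two different factors are exactly the (wrap-around) $2\times2$ subsquares. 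Hence a compact magic hypercube of order $4$ is precisely a magic-faced hypercube $L:\mathbb F_2^{2m}\to\{1,\dots,4^m\}$ with magic sum $M=2(4^m+1)$.

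Next, describe a great pandiagonal in these coordinates. It consists of the cells $x,x+d,x+2d,x+3d$ with $d\in\{\pm1\}^m$. Adding $2$ in a factor flips both of its bits, so $x+2d$ corresponds to $v+\mathbf 1$, where $\mathbf 1=e_1+\cdots+e_{2m}$ is the antipodal vector. Adding $\pm1$ in a factor flips exactly one of its two bits in the Gray code. So $x+d$ corresponds to $v+e_S$, where $S$ selects one bit from each factor and $|S|=m$. The pandiagonal sum is therefore
\[
g(v)+g(v+e_S),\qquad\text{where } g(v):=L(v)+L(v+\mathbf 1).
\]
Completeness is exactly the condition $g\equiv M/2$, i.e.\ that the magic-faced $Q_{2m}$ is associative.

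The key step is to determine $g$ using Theorem~\ref{thm:folded}. Since $2m$ is even, $L$ is also a magic-faced folded hypercube. For any coordinate direction $a\in\{1,\dots,2m\}$, the $4$-cycle $v,\;v+e_a,\;v+e_a+\mathbf 1,\;v+\mathbf 1$ of $FQ_{2m}$ has label sum $M$, which gives
\[
g(v)+g(v+e_a)=M\quad\text{for every } a.
\]
Hence $g(v)-M/2$ changes sign across every edge of $Q_{2m}$. Since $Q_{2m}$ is connected, this yields $g(v)=M/2+h\,(-1)^{|v|}$ for some constant $h$, where $|v|$ is the Hamming weight. Substituting, the pandiagonal sum equals $M+h\,(-1)^{|v|}\bigl(1+(-1)^{|S|}\bigr)=M+h\,(-1)^{|v|}\bigl(1+(-1)^m\bigr)$.

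For $m$ odd this is identically $M$, so every great pandiagonal sums to the magic constant, and the hypercube is pandiagonal. For $m$ even the sum is $M\pm2h$, so pandiagonality forces $h=0$. Then $g\equiv M/2$, which is exactly completeness, so the hypercube is most-perfect. The only step needing real care is the bookkeeping in the dictionary: checking that lines and subsquares are exactly the faces of $Q_{2m}$, and that $\pm1$ steps flip exactly one bit per factor. Once that dictionary is in place, the folded structure from Theorem~\ref{thm:folded} does all the work.
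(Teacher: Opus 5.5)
Your proof is correct, but it takes a different route from the paper's.

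\textbf{The paper's route.} It substitutes the four vertices of a great pandiagonal directly into the explicit coordinate formula (\ref{aSformula}). For $m$ odd the sum is $2\sum_{i=0}^{2m}b_i$. For $m$ even it is $2\sum_{i=1}^{2m}b_i$ or $2\sum_{i=1}^{2m}b_i+4b_0$, depending on the parity of $|S|$. So pandiagonality forces $b_0=0$, which the paper identifies with associativity, i.e.\ completeness.

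\textbf{Your route.} You never use the coordinates $b_i$ at this step. You use only two ingredients:
\begin{itemize}
\item the folded $4$-cycles $v,\,v+e_a,\,v+e_a+\mathbf 1,\,v+\mathbf 1$ supplied by Theorem~\ref{thm:folded} (exactly the cycles (\ref{foldcheck}) checked in Section~\ref{sec:coordinates});
\item connectivity and bipartiteness of $Q_{2m}$.
\end{itemize}
Together these force the antipodal-sum function into the form $g=M/2+h(-1)^{|v|}$. Your $h$ is precisely $-b_0$.

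\textbf{What your route buys.} The role of the parity of $m$ is transparent: it is just the parity of $|S|=m$. For even $m$, ``pandiagonal $\Rightarrow$ complete'' follows immediately from $h=0$, without separately invoking the criterion ``associative iff $b_0=0$.'' Your Gray-code dictionary also makes explicit that the $m$ flipped bits are one per factor, which the paper leaves implicit.

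\textbf{What the paper's route buys.} It extends. The same computation with (\ref{aSformula}) proves Theorem~\ref{thm:ragonal} on pan-$r$-agonals, including the negative part (c). Your method does not carry over directly. For $r<m$ the relevant partner is $v+e_T$ with $2\le |T|=2r\le 2m-2$, which is not adjacent to $v$ in $FQ_{2m}$. So there is no folded $4$-cycle giving an edge relation for $L(v)+L(v+e_T)$, and one has to return to the coordinates.

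Note also that your argument is not logically independent of the coordinates, since Theorem~\ref{thm:folded} is itself proved from (\ref{aSformula}).
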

\noindent
The classification and enumeration of compact and pandiagonal magic hypercubes of order~$4$ and dimension $m$ are therefore covered by  Theorem \ref{thm:final1} and Corollary~\ref{thm:countmostperfect}(a) when $m$ is odd, and by Theorem~\ref{thm:final2} and Corollary~\ref{thm:countmostperfect}(b) when $m$ is even. 

As  mentioned earlier, the existence of most-perfect hypercubes in dimension $2$ was demonstrated by the Khajuraho Magic Square. The 384 such most-perfect magic squares of order~4, as studied by Nārāyaṇa Paṇḍita~\cite{Narayana1356} and Rosser and Walker~\cite{RosserWalker1938,RosserWalker1939}---and the corresponding question of Coxeter---are fully explained by the $m=2$ case of Theorem~\ref{thm:final2} and in turn by the $n=4$ case of Theorem~\ref{thm:associative}.  

In dimension $3$, an example of a most-perfect magic cube of order $4$ can be found amongst the extensive computations of the modern magic-hypercube maestro Hendricks~\cite{Hendricks1972}, who constructed by hand 7860 pandiagonal magic cubes in an attempt to classify all such objects.  Of these, we noticed that some are actually compact and complete (as we knew they must be by Theorem~\ref{thm:final2}), though it seems this was not originally noticed or checked by Hendricks at the time, as this would have been an additional rather large search and computation by hand. Hendricks~\cite{Hendricks1968} also constructed a number of pandiagonal magic tesseracts, but none of them were fully compact and complete. 

In 2016, Knecht~\cite{oeis} performed an extensive computer calculation to find 46,080 most-perfect magic cubes, and he conjectured that these are all most-perfect cubes of order 4.  Since $|W(B_{6})|=2^6\cdot 6! = 46080$, Theorem~\ref{thm:final2} proves and fully explains Knecht's computation and conjecture for $m=3$ group-theoretically.  Based on Knecht's computations, Trump~\cite{oeis2} conjectured a number of properties that he expected to be true for a hypothetical most-perfect magic tesseract of order $4$, based on observed properties in dimensions 2 and 3. For example, he predicted what entries should be adjacent to the entry 1 in such a hypothetical tesseract.  The arguments in the proof of Theorem~\ref{thm:final2} 
also confirm Trump's conjectures for a most-perfect magic tesseract. 

\begin{figure}[htpb]
\centering
\begin{tikzpicture}[every path/.style={draw=red}, every node/.style={text=blue}, scale=0.42]
\draw (0,0) -- (10,0);
\draw (10,0) -- (20,0);
\draw (20,0) -- (30,0);
\draw (0,10) -- (10,10);
\draw (10,10) -- (20,10);
\draw (20,10) -- (30,10);
\draw (0,20) -- (10,20);
\draw (10,20) -- (20,20);
\draw (20,20) -- (30,20);
\draw (0,30) -- (10,30);
\draw (10,30) -- (20,30);
\draw (20,30) -- (30,30);
\draw (0,0) -- (0,10);
\draw (0,10) -- (0,20);
\draw (0,20) -- (0,30);
\draw (10,0) -- (10,10);
\draw (10,10) -- (10,20);
\draw (10,20) -- (10,30);
\draw (20,0) -- (20,10);
\draw (20,10) -- (20,20);
\draw (20,20) -- (20,30);
\draw (30,0) -- (30,10);
\draw (30,10) -- (30,20);
\draw (30,20) -- (30,30);
\draw (1.6,1.6) -- (9.78,1.6);
\draw (10.22,1.6) -- (11.6,1.6);
\draw (11.6,1.6) -- (19.78,1.6);
\draw (20.22,1.6) -- (21.6,1.6);
\draw (21.6,1.6) -- (29.78,1.6);
\draw (30.22,1.6) -- (31.6,1.6);
\draw (1.6,11.6) -- (9.78,11.6);
\draw (10.22,11.6) -- (11.6,11.6);
\draw (11.6,11.6) -- (19.78,11.6);
\draw (20.22,11.6) -- (21.6,11.6);
\draw (21.6,11.6) -- (29.78,11.6);
\draw (30.22,11.6) -- (31.6,11.6);
\draw (1.6,21.6) -- (9.78,21.6);
\draw (10.22,21.6) -- (11.6,21.6);
\draw (11.6,21.6) -- (19.78,21.6);
\draw (20.22,21.6) -- (21.6,21.6);
\draw (21.6,21.6) -- (29.78,21.6);
\draw (30.22,21.6) -- (31.6,21.6);
\draw (1.6,31.6) -- (11.6,31.6);
\draw (11.6,31.6) -- (21.6,31.6);
\draw (21.6,31.6) -- (31.6,31.6);
\draw (1.6,1.6) -- (1.6,9.78);
\draw (1.6,10.22) -- (1.6,11.6);
\draw (1.6,11.6) -- (1.6,19.78);
\draw (1.6,20.22) -- (1.6,21.6);
\draw (1.6,21.6) -- (1.6,29.78);
\draw (1.6,30.22) -- (1.6,31.6);
\draw (11.6,1.6) -- (11.6,9.78);
\draw (11.6,10.22) -- (11.6,11.6);
\draw (11.6,11.6) -- (11.6,19.78);
\draw (11.6,20.22) -- (11.6,21.6);
\draw (11.6,21.6) -- (11.6,29.78);
\draw (11.6,30.22) -- (11.6,31.6);
\draw (21.6,1.6) -- (21.6,9.78);
\draw (21.6,10.22) -- (21.6,11.6);
\draw (21.6,11.6) -- (21.6,19.78);
\draw (21.6,20.22) -- (21.6,21.6);
\draw (21.6,21.6) -- (21.6,29.78);
\draw (21.6,30.22) -- (21.6,31.6);
\draw (31.6,1.6) -- (31.6,11.6);
\draw (31.6,11.6) -- (31.6,21.6);
\draw (31.6,21.6) -- (31.6,31.6);
\draw (3.2,3.2) -- (9.78,3.2);
\draw (10.22,3.2) -- (11.38,3.2);
\draw (11.82,3.2) -- (13.2,3.2);
\draw (13.2,3.2) -- (19.78,3.2);
\draw (20.22,3.2) -- (21.38,3.2);
\draw (21.82,3.2) -- (23.2,3.2);
\draw (23.2,3.2) -- (29.78,3.2);
\draw (30.22,3.2) -- (31.38,3.2);
\draw (31.82,3.2) -- (33.2,3.2);
\draw (3.2,13.2) -- (9.78,13.2);
\draw (10.22,13.2) -- (11.38,13.2);
\draw (11.82,13.2) -- (13.2,13.2);
\draw (13.2,13.2) -- (19.78,13.2);
\draw (20.22,13.2) -- (21.38,13.2);
\draw (21.82,13.2) -- (23.2,13.2);
\draw (23.2,13.2) -- (29.78,13.2);
\draw (30.22,13.2) -- (31.38,13.2);
\draw (31.82,13.2) -- (33.2,13.2);
\draw (3.2,23.2) -- (9.78,23.2);
\draw (10.22,23.2) -- (11.38,23.2);
\draw (11.82,23.2) -- (13.2,23.2);
\draw (13.2,23.2) -- (19.78,23.2);
\draw (20.22,23.2) -- (21.38,23.2);
\draw (21.82,23.2) -- (23.2,23.2);
\draw (23.2,23.2) -- (29.78,23.2);
\draw (30.22,23.2) -- (31.38,23.2);
\draw (31.82,23.2) -- (33.2,23.2);
\draw (3.2,33.2) -- (13.2,33.2);
\draw (13.2,33.2) -- (23.2,33.2);
\draw (23.2,33.2) -- (33.2,33.2);
\draw (3.2,3.2) -- (3.2,9.78);
\draw (3.2,10.22) -- (3.2,11.38);
\draw (3.2,11.82) -- (3.2,13.2);
\draw (3.2,13.2) -- (3.2,19.78);
\draw (3.2,20.22) -- (3.2,21.38);
\draw (3.2,21.82) -- (3.2,23.2);
\draw (3.2,23.2) -- (3.2,29.78);
\draw (3.2,30.22) -- (3.2,31.38);
\draw (3.2,31.82) -- (3.2,33.2);
\draw (13.2,3.2) -- (13.2,9.78);
\draw (13.2,10.22) -- (13.2,11.38);
\draw (13.2,11.82) -- (13.2,13.2);
\draw (13.2,13.2) -- (13.2,19.78);
\draw (13.2,20.22) -- (13.2,21.38);
\draw (13.2,21.82) -- (13.2,23.2);
\draw (13.2,23.2) -- (13.2,29.78);
\draw (13.2,30.22) -- (13.2,31.38);
\draw (13.2,31.82) -- (13.2,33.2);
\draw (23.2,3.2) -- (23.2,9.78);
\draw (23.2,10.22) -- (23.2,11.38);
\draw (23.2,11.82) -- (23.2,13.2);
\draw (23.2,13.2) -- (23.2,19.78);
\draw (23.2,20.22) -- (23.2,21.38);
\draw (23.2,21.82) -- (23.2,23.2);
\draw (23.2,23.2) -- (23.2,29.78);
\draw (23.2,30.22) -- (23.2,31.38);
\draw (23.2,31.82) -- (23.2,33.2);
\draw (33.2,3.2) -- (33.2,13.2);
\draw (33.2,13.2) -- (33.2,23.2);
\draw (33.2,23.2) -- (33.2,33.2);
\draw (4.8,4.8) -- (9.78,4.8);
\draw (10.22,4.8) -- (11.38,4.8);
\draw (11.82,4.8) -- (12.98,4.8);
\draw (13.42,4.8) -- (14.8,4.8);
\draw (14.8,4.8) -- (19.78,4.8);
\draw (20.22,4.8) -- (21.38,4.8);
\draw (21.82,4.8) -- (22.98,4.8);
\draw (23.42,4.8) -- (24.8,4.8);
\draw (24.8,4.8) -- (29.78,4.8);
\draw (30.22,4.8) -- (31.38,4.8);
\draw (31.82,4.8) -- (32.98,4.8);
\draw (33.42,4.8) -- (34.8,4.8);
\draw (4.8,14.8) -- (9.78,14.8);
\draw (10.22,14.8) -- (11.38,14.8);
\draw (11.82,14.8) -- (12.98,14.8);
\draw (13.42,14.8) -- (14.8,14.8);
\draw (14.8,14.8) -- (19.78,14.8);
\draw (20.22,14.8) -- (21.38,14.8);
\draw (21.82,14.8) -- (22.98,14.8);
\draw (23.42,14.8) -- (24.8,14.8);
\draw (24.8,14.8) -- (29.78,14.8);
\draw (30.22,14.8) -- (31.38,14.8);
\draw (31.82,14.8) -- (32.98,14.8);
\draw (33.42,14.8) -- (34.8,14.8);
\draw (4.8,24.8) -- (9.78,24.8);
\draw (10.22,24.8) -- (11.38,24.8);
\draw (11.82,24.8) -- (12.98,24.8);
\draw (13.42,24.8) -- (14.8,24.8);
\draw (14.8,24.8) -- (19.78,24.8);
\draw (20.22,24.8) -- (21.38,24.8);
\draw (21.82,24.8) -- (22.98,24.8);
\draw (23.42,24.8) -- (24.8,24.8);
\draw (24.8,24.8) -- (29.78,24.8);
\draw (30.22,24.8) -- (31.38,24.8);
\draw (31.82,24.8) -- (32.98,24.8);
\draw (33.42,24.8) -- (34.8,24.8);
\draw (4.8,34.8) -- (14.8,34.8);
\draw (14.8,34.8) -- (24.8,34.8);
\draw (24.8,34.8) -- (34.8,34.8);
\draw (4.8,4.8) -- (4.8,9.78);
\draw (4.8,10.22) -- (4.8,11.38);
\draw (4.8,11.82) -- (4.8,12.98);
\draw (4.8,13.42) -- (4.8,14.8);
\draw (4.8,14.8) -- (4.8,19.78);
\draw (4.8,20.22) -- (4.8,21.38);
\draw (4.8,21.82) -- (4.8,22.98);
\draw (4.8,23.42) -- (4.8,24.8);
\draw (4.8,24.8) -- (4.8,29.78);
\draw (4.8,30.22) -- (4.8,31.38);
\draw (4.8,31.82) -- (4.8,32.98);
\draw (4.8,33.42) -- (4.8,34.8);
\draw (14.8,4.8) -- (14.8,9.78);
\draw (14.8,10.22) -- (14.8,11.38);
\draw (14.8,11.82) -- (14.8,12.98);
\draw (14.8,13.42) -- (14.8,14.8);
\draw (14.8,14.8) -- (14.8,19.78);
\draw (14.8,20.22) -- (14.8,21.38);
\draw (14.8,21.82) -- (14.8,22.98);
\draw (14.8,23.42) -- (14.8,24.8);
\draw (14.8,24.8) -- (14.8,29.78);
\draw (14.8,30.22) -- (14.8,31.38);
\draw (14.8,31.82) -- (14.8,32.98);
\draw (14.8,33.42) -- (14.8,34.8);
\draw (24.8,4.8) -- (24.8,9.78);
\draw (24.8,10.22) -- (24.8,11.38);
\draw (24.8,11.82) -- (24.8,12.98);
\draw (24.8,13.42) -- (24.8,14.8);
\draw (24.8,14.8) -- (24.8,19.78);
\draw (24.8,20.22) -- (24.8,21.38);
\draw (24.8,21.82) -- (24.8,22.98);
\draw (24.8,23.42) -- (24.8,24.8);
\draw (24.8,24.8) -- (24.8,29.78);
\draw (24.8,30.22) -- (24.8,31.38);
\draw (24.8,31.82) -- (24.8,32.98);
\draw (24.8,33.42) -- (24.8,34.8);
\draw (34.8,4.8) -- (34.8,14.8);
\draw (34.8,14.8) -- (34.8,24.8);
\draw (34.8,24.8) -- (34.8,34.8);
\draw (0,0) -- (1.6,1.6);
\draw (1.6,1.6) -- (3.2,3.2);
\draw (3.2,3.2) -- (4.8,4.8);
\draw (0,10) -- (1.6,11.6);
\draw (1.6,11.6) -- (3.2,13.2);
\draw (3.2,13.2) -- (4.8,14.8);
\draw (0,20) -- (1.6,21.6);
\draw (1.6,21.6) -- (3.2,23.2);
\draw (3.2,23.2) -- (4.8,24.8);
\draw (0,30) -- (1.6,31.6);
\draw (1.6,31.6) -- (3.2,33.2);
\draw (3.2,33.2) -- (4.8,34.8);
\draw (10,0) -- (11.6,1.6);
\draw (11.6,1.6) -- (13.2,3.2);
\draw (13.2,3.2) -- (14.8,4.8);
\draw (10,10) -- (11.6,11.6);
\draw (11.6,11.6) -- (13.2,13.2);
\draw (13.2,13.2) -- (14.8,14.8);
\draw (10,20) -- (11.6,21.6);
\draw (11.6,21.6) -- (13.2,23.2);
\draw (13.2,23.2) -- (14.8,24.8);
\draw (10,30) -- (11.6,31.6);
\draw (11.6,31.6) -- (13.2,33.2);
\draw (13.2,33.2) -- (14.8,34.8);
\draw (20,0) -- (21.6,1.6);
\draw (21.6,1.6) -- (23.2,3.2);
\draw (23.2,3.2) -- (24.8,4.8);
\draw (20,10) -- (21.6,11.6);
\draw (21.6,11.6) -- (23.2,13.2);
\draw (23.2,13.2) -- (24.8,14.8);
\draw (20,20) -- (21.6,21.6);
\draw (21.6,21.6) -- (23.2,23.2);
\draw (23.2,23.2) -- (24.8,24.8);
\draw (20,30) -- (21.6,31.6);
\draw (21.6,31.6) -- (23.2,33.2);
\draw (23.2,33.2) -- (24.8,34.8);
\draw (30,0) -- (31.6,1.6);
\draw (31.6,1.6) -- (33.2,3.2);
\draw (33.2,3.2) -- (34.8,4.8);
\draw (30,10) -- (31.6,11.6);
\draw (31.6,11.6) -- (33.2,13.2);
\draw (33.2,13.2) -- (34.8,14.8);
\draw (30,20) -- (31.6,21.6);
\draw (31.6,21.6) -- (33.2,23.2);
\draw (33.2,23.2) -- (34.8,24.8);
\draw (30,30) -- (31.6,31.6);
\draw (31.6,31.6) -- (33.2,33.2);
\draw (33.2,33.2) -- (34.8,34.8);
\node[fill=white,inner sep=1pt] at (0,0) {$1$};
\node[fill=white,inner sep=1pt] at (10,0) {$62$};
\node[fill=white,inner sep=1pt] at (20,0) {$4$};
\node[fill=white,inner sep=1pt] at (30,0) {$63$};
\node[fill=white,inner sep=1pt] at (0,10) {$56$};
\node[fill=white,inner sep=1pt] at (10,10) {$11$};
\node[fill=white,inner sep=1pt] at (20,10) {$53$};
\node[fill=white,inner sep=1pt] at (30,10) {$10$};
\node[fill=white,inner sep=1pt] at (0,20) {$13$};
\node[fill=white,inner sep=1pt] at (10,20) {$50$};
\node[fill=white,inner sep=1pt] at (20,20) {$16$};
\node[fill=white,inner sep=1pt] at (30,20) {$51$};
\node[fill=white,inner sep=1pt] at (0,30) {$60$};
\node[fill=white,inner sep=1pt] at (10,30) {$7$};
\node[fill=white,inner sep=1pt] at (20,30) {$57$};
\node[fill=white,inner sep=1pt] at (30,30) {$6$};
\node[fill=white,inner sep=1pt] at (1.6,1.6) {$32$};
\node[fill=white,inner sep=1pt] at (11.6,1.6) {$35$};
\node[fill=white,inner sep=1pt] at (21.6,1.6) {$29$};
\node[fill=white,inner sep=1pt] at (31.6,1.6) {$34$};
\node[fill=white,inner sep=1pt] at (1.6,11.6) {$41$};
\node[fill=white,inner sep=1pt] at (11.6,11.6) {$22$};
\node[fill=white,inner sep=1pt] at (21.6,11.6) {$44$};
\node[fill=white,inner sep=1pt] at (31.6,11.6) {$23$};
\node[fill=white,inner sep=1pt] at (1.6,21.6) {$20$};
\node[fill=white,inner sep=1pt] at (11.6,21.6) {$47$};
\node[fill=white,inner sep=1pt] at (21.6,21.6) {$17$};
\node[fill=white,inner sep=1pt] at (31.6,21.6) {$46$};
\node[fill=white,inner sep=1pt] at (1.6,31.6) {$37$};
\node[fill=white,inner sep=1pt] at (11.6,31.6) {$26$};
\node[fill=white,inner sep=1pt] at (21.6,31.6) {$40$};
\node[fill=white,inner sep=1pt] at (31.6,31.6) {$27$};
\node[fill=white,inner sep=1pt] at (3.2,3.2) {$49$};
\node[fill=white,inner sep=1pt] at (13.2,3.2) {$14$};
\node[fill=white,inner sep=1pt] at (23.2,3.2) {$52$};
\node[fill=white,inner sep=1pt] at (33.2,3.2) {$15$};
\node[fill=white,inner sep=1pt] at (3.2,13.2) {$8$};
\node[fill=white,inner sep=1pt] at (13.2,13.2) {$59$};
\node[fill=white,inner sep=1pt] at (23.2,13.2) {$5$};
\node[fill=white,inner sep=1pt] at (33.2,13.2) {$58$};
\node[fill=white,inner sep=1pt] at (3.2,23.2) {$61$};
\node[fill=white,inner sep=1pt] at (13.2,23.2) {$2$};
\node[fill=white,inner sep=1pt] at (23.2,23.2) {$64$};
\node[fill=white,inner sep=1pt] at (33.2,23.2) {$3$};
\node[fill=white,inner sep=1pt] at (3.2,33.2) {$12$};
\node[fill=white,inner sep=1pt] at (13.2,33.2) {$55$};
\node[fill=white,inner sep=1pt] at (23.2,33.2) {$9$};
\node[fill=white,inner sep=1pt] at (33.2,33.2) {$54$};
\node[fill=white,inner sep=1pt] at (4.8,4.8) {$48$};
\node[fill=white,inner sep=1pt] at (14.8,4.8) {$19$};
\node[fill=white,inner sep=1pt] at (24.8,4.8) {$45$};
\node[fill=white,inner sep=1pt] at (34.8,4.8) {$18$};
\node[fill=white,inner sep=1pt] at (4.8,14.8) {$25$};
\node[fill=white,inner sep=1pt] at (14.8,14.8) {$38$};
\node[fill=white,inner sep=1pt] at (24.8,14.8) {$28$};
\node[fill=white,inner sep=1pt] at (34.8,14.8) {$39$};
\node[fill=white,inner sep=1pt] at (4.8,24.8) {$36$};
\node[fill=white,inner sep=1pt] at (14.8,24.8) {$31$};
\node[fill=white,inner sep=1pt] at (24.8,24.8) {$33$};
\node[fill=white,inner sep=1pt] at (34.8,24.8) {$30$};
\node[fill=white,inner sep=1pt] at (4.8,34.8) {$21$};
\node[fill=white,inner sep=1pt] at (14.8,34.8) {$42$};
\node[fill=white,inner sep=1pt] at (24.8,34.8) {$24$};
\node[fill=white,inner sep=1pt] at (34.8,34.8) {$43$};
\end{tikzpicture}
\vspace{.15in}
\caption{A $4\times4\times4$ most-perfect magic cube with entries $1,\dots,64$. Every orthogonal line, every $2\times2$ subsquare, and every great diagonal and {\it great pandiagonal}---i.e., every toroidal line in a great diagonal direction $(\pm1,\pm1,\pm1)$---sums to the magic constant~$130$.  In addition, every pair of entries two apart on any great pandiagonal sums to 65---half the magic \vspace{.125in} constant. 
\linebreak 
This  Magic Cube thus has all the analogous properties in three dimensions that the Khajuraho Magic Square has in two dimensions. By Theorem~\ref{thm:final2}, 
all such most-perfect magic cubes lie in a single orbit for a faithful action of $W(B_6)$. Examples of such most-perfect magic cubes were first constructed by Hendricks and Trump.}
\label{fig:cube64}
\end{figure}

\begin{figure}[htbp]
\centering
\begin{tikzpicture}[scale=0.62]
\draw[red] (-0.55,-0.55) -- (-0.55,22.4);
\draw[red] (-0.55,-0.55) -- (22.4,-0.55);
\draw[red] (5.15,-0.55) -- (5.15,22.4);
\draw[red] (-0.55,5.15) -- (22.4,5.15);
\draw[red] (10.9,-0.55) -- (10.9,22.4);
\draw[red] (-0.55,10.9) -- (22.4,10.9);
\draw[red] (16.8,-0.55) -- (16.8,22.4);
\draw[red] (-0.55,16.8) -- (22.4,16.8);
\draw[red] (22.4,-0.55) -- (22.4,22.4);
\draw[red] (-0.55,22.4) -- (22.4,22.4);
\node[text=blue] at (0.0, 0.0) {$1$};
\node[text=blue] at (0.0, 1.5) {$248$};
\node[text=blue] at (0.0, 3.0) {$13$};
\node[text=blue] at (0.0, 4.5) {$252$};
\node[text=blue] at (1.5, 0.0) {$254$};
\node[text=blue] at (1.5, 1.5) {$11$};
\node[text=blue] at (1.5, 3.0) {$242$};
\node[text=blue] at (1.5, 4.5) {$7$};
\node[text=blue] at (3.0, 0.0) {$4$};
\node[text=blue] at (3.0, 1.5) {$245$};
\node[text=blue] at (3.0, 3.0) {$16$};
\node[text=blue] at (3.0, 4.5) {$249$};
\node[text=blue] at (4.5, 0.0) {$255$};
\node[text=blue] at (4.5, 1.5) {$10$};
\node[text=blue] at (4.5, 3.0) {$243$};
\node[text=blue] at (4.5, 4.5) {$6$};
\node[text=blue] at (0.0, 5.8) {$128$};
\node[text=blue] at (0.0, 7.3) {$137$};
\node[text=blue] at (0.0, 8.8) {$116$};
\node[text=blue] at (0.0, 10.3) {$133$};
\node[text=blue] at (1.5, 5.8) {$131$};
\node[text=blue] at (1.5, 7.3) {$118$};
\node[text=blue] at (1.5, 8.8) {$143$};
\node[text=blue] at (1.5, 10.3) {$122$};
\node[text=blue] at (3.0, 5.8) {$125$};
\node[text=blue] at (3.0, 7.3) {$140$};
\node[text=blue] at (3.0, 8.8) {$113$};
\node[text=blue] at (3.0, 10.3) {$136$};
\node[text=blue] at (4.5, 5.8) {$130$};
\node[text=blue] at (4.5, 7.3) {$119$};
\node[text=blue] at (4.5, 8.8) {$142$};
\node[text=blue] at (4.5, 10.3) {$123$};
\node[text=blue] at (0.0, 11.6) {$193$};
\node[text=blue] at (0.0, 13.1) {$56$};
\node[text=blue] at (0.0, 14.6) {$205$};
\node[text=blue] at (0.0, 16.1) {$60$};
\node[text=blue] at (1.5, 11.6) {$62$};
\node[text=blue] at (1.5, 13.1) {$203$};
\node[text=blue] at (1.5, 14.6) {$50$};
\node[text=blue] at (1.5, 16.1) {$199$};
\node[text=blue] at (3.0, 11.6) {$196$};
\node[text=blue] at (3.0, 13.1) {$53$};
\node[text=blue] at (3.0, 14.6) {$208$};
\node[text=blue] at (3.0, 16.1) {$57$};
\node[text=blue] at (4.5, 11.6) {$63$};
\node[text=blue] at (4.5, 13.1) {$202$};
\node[text=blue] at (4.5, 14.6) {$51$};
\node[text=blue] at (4.5, 16.1) {$198$};
\node[text=blue] at (0.0, 17.4) {$192$};
\node[text=blue] at (0.0, 18.9) {$73$};
\node[text=blue] at (0.0, 20.4) {$180$};
\node[text=blue] at (0.0, 21.9) {$69$};
\node[text=blue] at (1.5, 17.4) {$67$};
\node[text=blue] at (1.5, 18.9) {$182$};
\node[text=blue] at (1.5, 20.4) {$79$};
\node[text=blue] at (1.5, 21.9) {$186$};
\node[text=blue] at (3.0, 17.4) {$189$};
\node[text=blue] at (3.0, 18.9) {$76$};
\node[text=blue] at (3.0, 20.4) {$177$};
\node[text=blue] at (3.0, 21.9) {$72$};
\node[text=blue] at (4.5, 17.4) {$66$};
\node[text=blue] at (4.5, 18.9) {$183$};
\node[text=blue] at (4.5, 20.4) {$78$};
\node[text=blue] at (4.5, 21.9) {$187$};
\node[text=blue] at (5.8, 0.0) {$224$};
\node[text=blue] at (5.8, 1.5) {$41$};
\node[text=blue] at (5.8, 3.0) {$212$};
\node[text=blue] at (5.8, 4.5) {$37$};
\node[text=blue] at (7.3, 0.0) {$35$};
\node[text=blue] at (7.3, 1.5) {$214$};
\node[text=blue] at (7.3, 3.0) {$47$};
\node[text=blue] at (7.3, 4.5) {$218$};
\node[text=blue] at (8.8, 0.0) {$221$};
\node[text=blue] at (8.8, 1.5) {$44$};
\node[text=blue] at (8.8, 3.0) {$209$};
\node[text=blue] at (8.8, 4.5) {$40$};
\node[text=blue] at (10.3, 0.0) {$34$};
\node[text=blue] at (10.3, 1.5) {$215$};
\node[text=blue] at (10.3, 3.0) {$46$};
\node[text=blue] at (10.3, 4.5) {$219$};
\node[text=blue] at (5.8, 5.8) {$161$};
\node[text=blue] at (5.8, 7.3) {$88$};
\node[text=blue] at (5.8, 8.8) {$173$};
\node[text=blue] at (5.8, 10.3) {$92$};
\node[text=blue] at (7.3, 5.8) {$94$};
\node[text=blue] at (7.3, 7.3) {$171$};
\node[text=blue] at (7.3, 8.8) {$82$};
\node[text=blue] at (7.3, 10.3) {$167$};
\node[text=blue] at (8.8, 5.8) {$164$};
\node[text=blue] at (8.8, 7.3) {$85$};
\node[text=blue] at (8.8, 8.8) {$176$};
\node[text=blue] at (8.8, 10.3) {$89$};
\node[text=blue] at (10.3, 5.8) {$95$};
\node[text=blue] at (10.3, 7.3) {$170$};
\node[text=blue] at (10.3, 8.8) {$83$};
\node[text=blue] at (10.3, 10.3) {$166$};
\node[text=blue] at (5.8, 11.6) {$32$};
\node[text=blue] at (5.8, 13.1) {$233$};
\node[text=blue] at (5.8, 14.6) {$20$};
\node[text=blue] at (5.8, 16.1) {$229$};
\node[text=blue] at (7.3, 11.6) {$227$};
\node[text=blue] at (7.3, 13.1) {$22$};
\node[text=blue] at (7.3, 14.6) {$239$};
\node[text=blue] at (7.3, 16.1) {$26$};
\node[text=blue] at (8.8, 11.6) {$29$};
\node[text=blue] at (8.8, 13.1) {$236$};
\node[text=blue] at (8.8, 14.6) {$17$};
\node[text=blue] at (8.8, 16.1) {$232$};
\node[text=blue] at (10.3, 11.6) {$226$};
\node[text=blue] at (10.3, 13.1) {$23$};
\node[text=blue] at (10.3, 14.6) {$238$};
\node[text=blue] at (10.3, 16.1) {$27$};
\node[text=blue] at (5.8, 17.4) {$97$};
\node[text=blue] at (5.8, 18.9) {$152$};
\node[text=blue] at (5.8, 20.4) {$109$};
\node[text=blue] at (5.8, 21.9) {$156$};
\node[text=blue] at (7.3, 17.4) {$158$};
\node[text=blue] at (7.3, 18.9) {$107$};
\node[text=blue] at (7.3, 20.4) {$146$};
\node[text=blue] at (7.3, 21.9) {$103$};
\node[text=blue] at (8.8, 17.4) {$100$};
\node[text=blue] at (8.8, 18.9) {$149$};
\node[text=blue] at (8.8, 20.4) {$112$};
\node[text=blue] at (8.8, 21.9) {$153$};
\node[text=blue] at (10.3, 17.4) {$159$};
\node[text=blue] at (10.3, 18.9) {$106$};
\node[text=blue] at (10.3, 20.4) {$147$};
\node[text=blue] at (10.3, 21.9) {$102$};
\node[text=blue] at (11.6, 0.0) {$49$};
\node[text=blue] at (11.6, 1.5) {$200$};
\node[text=blue] at (11.6, 3.0) {$61$};
\node[text=blue] at (11.6, 4.5) {$204$};
\node[text=blue] at (13.1, 0.0) {$206$};
\node[text=blue] at (13.1, 1.5) {$59$};
\node[text=blue] at (13.1, 3.0) {$194$};
\node[text=blue] at (13.1, 4.5) {$55$};
\node[text=blue] at (14.6, 0.0) {$52$};
\node[text=blue] at (14.6, 1.5) {$197$};
\node[text=blue] at (14.6, 3.0) {$64$};
\node[text=blue] at (14.6, 4.5) {$201$};
\node[text=blue] at (16.1, 0.0) {$207$};
\node[text=blue] at (16.1, 1.5) {$58$};
\node[text=blue] at (16.1, 3.0) {$195$};
\node[text=blue] at (16.1, 4.5) {$54$};
\node[text=blue] at (11.6, 5.8) {$80$};
\node[text=blue] at (11.6, 7.3) {$185$};
\node[text=blue] at (11.6, 8.8) {$68$};
\node[text=blue] at (11.6, 10.3) {$181$};
\node[text=blue] at (13.1, 5.8) {$179$};
\node[text=blue] at (13.1, 7.3) {$70$};
\node[text=blue] at (13.1, 8.8) {$191$};
\node[text=blue] at (13.1, 10.3) {$74$};
\node[text=blue] at (14.6, 5.8) {$77$};
\node[text=blue] at (14.6, 7.3) {$188$};
\node[text=blue] at (14.6, 8.8) {$65$};
\node[text=blue] at (14.6, 10.3) {$184$};
\node[text=blue] at (16.1, 5.8) {$178$};
\node[text=blue] at (16.1, 7.3) {$71$};
\node[text=blue] at (16.1, 8.8) {$190$};
\node[text=blue] at (16.1, 10.3) {$75$};
\node[text=blue] at (11.6, 11.6) {$241$};
\node[text=blue] at (11.6, 13.1) {$8$};
\node[text=blue] at (11.6, 14.6) {$253$};
\node[text=blue] at (11.6, 16.1) {$12$};
\node[text=blue] at (13.1, 11.6) {$14$};
\node[text=blue] at (13.1, 13.1) {$251$};
\node[text=blue] at (13.1, 14.6) {$2$};
\node[text=blue] at (13.1, 16.1) {$247$};
\node[text=blue] at (14.6, 11.6) {$244$};
\node[text=blue] at (14.6, 13.1) {$5$};
\node[text=blue] at (14.6, 14.6) {$256$};
\node[text=blue] at (14.6, 16.1) {$9$};
\node[text=blue] at (16.1, 11.6) {$15$};
\node[text=blue] at (16.1, 13.1) {$250$};
\node[text=blue] at (16.1, 14.6) {$3$};
\node[text=blue] at (16.1, 16.1) {$246$};
\node[text=blue] at (11.6, 17.4) {$144$};
\node[text=blue] at (11.6, 18.9) {$121$};
\node[text=blue] at (11.6, 20.4) {$132$};
\node[text=blue] at (11.6, 21.9) {$117$};
\node[text=blue] at (13.1, 17.4) {$115$};
\node[text=blue] at (13.1, 18.9) {$134$};
\node[text=blue] at (13.1, 20.4) {$127$};
\node[text=blue] at (13.1, 21.9) {$138$};
\node[text=blue] at (14.6, 17.4) {$141$};
\node[text=blue] at (14.6, 18.9) {$124$};
\node[text=blue] at (14.6, 20.4) {$129$};
\node[text=blue] at (14.6, 21.9) {$120$};
\node[text=blue] at (16.1, 17.4) {$114$};
\node[text=blue] at (16.1, 18.9) {$135$};
\node[text=blue] at (16.1, 20.4) {$126$};
\node[text=blue] at (16.1, 21.9) {$139$};
\node[text=blue] at (17.4, 0.0) {$240$};
\node[text=blue] at (17.4, 1.5) {$25$};
\node[text=blue] at (17.4, 3.0) {$228$};
\node[text=blue] at (17.4, 4.5) {$21$};
\node[text=blue] at (18.9, 0.0) {$19$};
\node[text=blue] at (18.9, 1.5) {$230$};
\node[text=blue] at (18.9, 3.0) {$31$};
\node[text=blue] at (18.9, 4.5) {$234$};
\node[text=blue] at (20.4, 0.0) {$237$};
\node[text=blue] at (20.4, 1.5) {$28$};
\node[text=blue] at (20.4, 3.0) {$225$};
\node[text=blue] at (20.4, 4.5) {$24$};
\node[text=blue] at (21.9, 0.0) {$18$};
\node[text=blue] at (21.9, 1.5) {$231$};
\node[text=blue] at (21.9, 3.0) {$30$};
\node[text=blue] at (21.9, 4.5) {$235$};
\node[text=blue] at (17.4, 5.8) {$145$};
\node[text=blue] at (17.4, 7.3) {$104$};
\node[text=blue] at (17.4, 8.8) {$157$};
\node[text=blue] at (17.4, 10.3) {$108$};
\node[text=blue] at (18.9, 5.8) {$110$};
\node[text=blue] at (18.9, 7.3) {$155$};
\node[text=blue] at (18.9, 8.8) {$98$};
\node[text=blue] at (18.9, 10.3) {$151$};
\node[text=blue] at (20.4, 5.8) {$148$};
\node[text=blue] at (20.4, 7.3) {$101$};
\node[text=blue] at (20.4, 8.8) {$160$};
\node[text=blue] at (20.4, 10.3) {$105$};
\node[text=blue] at (21.9, 5.8) {$111$};
\node[text=blue] at (21.9, 7.3) {$154$};
\node[text=blue] at (21.9, 8.8) {$99$};
\node[text=blue] at (21.9, 10.3) {$150$};
\node[text=blue] at (17.4, 11.6) {$48$};
\node[text=blue] at (17.4, 13.1) {$217$};
\node[text=blue] at (17.4, 14.6) {$36$};
\node[text=blue] at (17.4, 16.1) {$213$};
\node[text=blue] at (18.9, 11.6) {$211$};
\node[text=blue] at (18.9, 13.1) {$38$};
\node[text=blue] at (18.9, 14.6) {$223$};
\node[text=blue] at (18.9, 16.1) {$42$};
\node[text=blue] at (20.4, 11.6) {$45$};
\node[text=blue] at (20.4, 13.1) {$220$};
\node[text=blue] at (20.4, 14.6) {$33$};
\node[text=blue] at (20.4, 16.1) {$216$};
\node[text=blue] at (21.9, 11.6) {$210$};
\node[text=blue] at (21.9, 13.1) {$39$};
\node[text=blue] at (21.9, 14.6) {$222$};
\node[text=blue] at (21.9, 16.1) {$43$};
\node[text=blue] at (17.4, 17.4) {$81$};
\node[text=blue] at (17.4, 18.9) {$168$};
\node[text=blue] at (17.4, 20.4) {$93$};
\node[text=blue] at (17.4, 21.9) {$172$};
\node[text=blue] at (18.9, 17.4) {$174$};
\node[text=blue] at (18.9, 18.9) {$91$};
\node[text=blue] at (18.9, 20.4) {$162$};
\node[text=blue] at (18.9, 21.9) {$87$};
\node[text=blue] at (20.4, 17.4) {$84$};
\node[text=blue] at (20.4, 18.9) {$165$};
\node[text=blue] at (20.4, 20.4) {$96$};
\node[text=blue] at (20.4, 21.9) {$169$};
\node[text=blue] at (21.9, 17.4) {$175$};
\node[text=blue] at (21.9, 18.9) {$90$};
\node[text=blue] at (21.9, 20.4) {$163$};
\node[text=blue] at (21.9, 21.9) {$86$};
\end{tikzpicture}
\vspace{.15in}
\caption{A $4\times4\times4\times4$ most-perfect magic tesseract with entries $1,\dots,256$, drawn as a $4\times4$ array of $4\times4$ arrays.
Every row, column, pillar, and file, every $2\times2$ subsquare (in all six pairs of coordinate directions), and every great diagonal and {great pandiagonal} sums to the magic constant $514$.  In addition, every pair of entries two apart on \vspace{.125in}
any great pandiagonal sums to $257$---half the magic constant. 
\linebreak This Magic Tesseract thus has all the analogous properties in four  dimensions that the Khajuraho Magic Square has in two dimensions and
Figure~\ref{fig:cube64} has in three dimensions; 
\linebreak
it represents  the first-known construction of a most-perfect magic hypercube in a dimension larger than 3.
By Theorem~\ref{thm:final2}, such most-perfect hypercubes of order 4  exist  in every dimension $m$, and for each $m$ they lie in a single orbit for a faithful action of $W(B_{2m})$.} 
\label{fig:tesseract256}
\end{figure}

As with Theorems~\ref{thm:main} and~\ref{thm:associative}, the proofs underlying Theorems~\ref{thm:final1} and~\ref{thm:final2} enable explicit constructions in all dimensions. 
We have constructed representative models of a most-perfect magic cube and a most-perfect tesseract explicitly in  Figures~\ref{fig:cube64} and~\ref{fig:tesseract256}, respectively. It is addictive to check that, in each of these examples, every orthogonal line, great diagonal, great pandiagonal, and $2\times 2$ subsquare sums to the same magic sum, 130 and 514, respectively.

\subsection*{\normalsize Methods and Organization}

In Section~\ref{sec:coordinates}, for a field $F$ of characteristic not equal to 2, we construct a particularly convenient basis for the vector space $V_n(F)$ of $2\times \cdots \times 2$ {\it face-magic} hypercubical matrices over $F$, i.e.,   $2\times \cdots \times 2$ matrices such that every $2\times2$ square face has the same sum. The basis has a particularly nice interpretation in terms of Fourier analysis on $(\mathbb Z/2\mathbb Z)^n$. 

To prove Theorem~\ref{thm:main}, we then wish to classify those matrices in $V_n(\mathbb Q)$ that have the consecutive integer entries $1,\ldots,2^n$.  Due to our choice of basis, which was designed for the purpose, this amounts to two rather elegant problems in additive number theory---one for even $n$ and one for odd $n$.  We describe these two problems precisely in Section~\ref{sec:mfhproofs}.

In Section~\ref{sec:lemmas}, we solve these two problems; the two problems loosely relate to the classical problem famously posed by Moser~\cite{Moser1957} and considered by Selfridge and Straus~\cite{SelfridgeStraus1958} regarding when a set of real numbers can be re-constructed from its subset sums.  But these two problems both have surprises in the form of exceptional/sporadic solutions.  However, the exceptional solutions arise for odd values of~$n$ in the problem relevant for $n$~even, and for even values of~$n$ in the problem relevant for $n$~odd.  Considering the relevant solutions for each of the two parities of $n$ yields proofs of Theorems~\ref{thm:main}--\ref{thm:folded2} and, with additional constraints imposed, Theorem~\ref{thm:associative}. 

In Section~\ref{sec:higher}, we then turn to the classical problem of understanding most-perfect squares, cubes, and hypercubes.   We introduce the exceptional graph isomorphism $C_4^m\cong Q_{2m}$ to reduce problems relating to compactness, pandiagonality, completeness, and most-perfectness to ones relating to corresponding properties of magic-faced hypercubes.  This perspective also helps to make all the symmetries of the situation conceptually evident, and leads to proofs of Theorems~\ref{thm:1920}--\ref{thm:implications}. 

Finally, in Section~\ref{sec:additional}, we describe some additional properties of interest satisfied by the most-perfect magic objects constructed in Theorem~\ref{thm:final2} and illustrated in Figures~\ref{fig:cube64} and \ref{fig:tesseract256}. Specifically, we study pan-$r$-diagonality for other values of $r$---a theme of frequent interest in the literature.  We end by discussing future directions.

\section{A convenient basis for face-magic hypercubes over a field of characteristic not 2}
\label{sec:coordinates}

Let $F$ be a field of characteristic not equal to 2, and let $V_n(F)$ denote the $F$-vector space of $2\times2\times\cdots\times2$ hypercubical matrices over~$F$. Let $U_n(F)\subset V_n(F)$ denote the subspace of matrices $A\in V_n(F)$ that are {\it face-magic}, i.e., those $A\in V_n(F)$ for which the entries of every $2\times 2$ square face of $A$ has the same magic sum $C=C(A)$.

We denote an element $A\in V_n(F)$ as 
\begin{equation}
A=(a_{i_1i_2\cdots i_n})_{i_k\in\{0,1\}},
\end{equation}
where each $a_{i_1i_2\cdots i_n}\in F$. We also denote the entries $a_{i_1i_2\cdots i_n}$ of $A$ more simply as~$a_S$, where \begin{equation}S=\{j: i_j=1\}\subseteq\{1,\ldots,n\}.\end{equation} That is, we write 
\begin{equation}A=(a_S)_{S\subseteq\{1,\dots,n\}}.
\end{equation}
Then $A$ is \emph{face-magic} if every $2\times2$ face of the hypercube~$A$ has the same magic sum $C=C(A)$; that is, for every $S\subseteq\{1,\dots,n\}$ and every pair of distinct indices $i,j\notin S$, we have 
\begin{equation}\label{asteq}
a_S + a_{S\cup\{i\}} + a_{S\cup\{j\}} + a_{S\cup\{i,j\}} \;=\; C. 
\end{equation}

Let $U_n^0(F)\subset U_n(F)$ denote the subspace of $A\in U_n(F)$ such that $a_\emptyset=0$. By subtracting $a_\emptyset$ from every entry of a matrix $A\in U_n(F)$, we can map any $A\in U_n(F)$ to $ U^0_n(F)$, yielding a projection $\pi:U_n(F)\to U_n^0(F)$.  

Now let us fix a matrix $A=(a_S)_{S\subseteq\{1,\dots,n\}}\in U_n^0(F)$ with magic constant $C$. For each $i\in\{1,\ldots,n\}$, let us formally write
\begin{equation}\label{firsteq}
a_{\{i\}} = b_0+\cdots+\widehat{b_i}+\cdots+b_n,
\end{equation}
where we use the symbol $\;\widehat{}\;$ to denote omission, and also write 
\begin{equation}\label{secondeq}
C=2(b_0+\cdots+b_n).
\end{equation} This gives a system of $n+1$ linear equations in $b_0, \ldots, b_n$ in terms of the entries $a_{\{i\}}$ of $A$, for $i=1,\ldots,n$, and~the magic constant $C$. The determinant of this linear change of variable from $\{a_{\{1\}},\ldots,a_{\{n\}},C\}$ to $\{b_0,b_1,\ldots,b_n\}$
is
\begin{equation}
  \left|\,
  \begin{matrix}
    1 & 0 & 1 & \!\cdots\! & 1 & 1 \\
    1 & 1 & \,0\, & \!\cdots\! & 1 & 1 \\
    \vdots & \vdots & \vdots & \!\ddots\! & \vdots & \vdots \\
    1 & 1 & 1 & \!\cdots\! & \,0\, & 1 \\
    1 & 1 & 1 & \!\cdots\! & 1 & \,0\, \\
    \,2\, & \,2\, & 2 & \!\cdots\! & 2 & 2
  \end{matrix}\,\right|
  \;=\;
  \left|\,
  \begin{matrix}
    0 & \!-1\! & 1 & \!\cdots\! & 0 & 0 \\
    0 & 0 & \!-1\! & \!\cdots\! & 0 & 0 \\
    \vdots & \vdots & \vdots & \!\ddots\! & & \vdots \\
    0 & 0 & 0 & \!\cdots\! & \!-1\! & 1 \\
    0 & 0 & 0 & \!\cdots\! & 0 & \!-1\! \\
    2 & 2 & 2 & \!\cdots\! & 2 & 2
  \end{matrix}\,\right|
  \,=\, 2.
\end{equation}
Since 2 is invertible in $F$, the 
system of linear equations (\ref{firsteq}) and (\ref{secondeq}) 
has a unique solution for $b_0,b_1,\ldots,b_n$ in terms of $a_{\{1\}},\ldots,a_{\{n\}}$ and $C$. We fix the values of $b_0,b_1,\ldots,b_n$ so that (\ref{firsteq}) and (\ref{secondeq})  are true. 

As the sum of every $2\times2$ face of $A$ containing $a_\emptyset$ equals $C$, we have for any $1\leq i<j\leq n$ that 
$$a_\emptyset+a_{\{i\}}+a_{\{j\}}+a_{\{i,j\}}=C;$$ hence 
\begin{eqnarray*}
a_{\{i,j\}} &=& 2(b_0+\cdots+b_n)) - (b_0+\cdots+\widehat{b_i}+\cdots+b_n) - (b_0+\cdots+\widehat{b_j}+\cdots+b_n) \\ &=& b_i+b_j. 
\end{eqnarray*}
Similarly, for any $1\leq i<j<k\leq n$, we have 
\[
  a_{\{i\}} + a_{\{i,j\}} + a_{\{i,k\}} + a_{\{i,j,k\}} = C,
\]
which implies that 
\begin{eqnarray*}
  a_{\{i,j,k\}} &=& 2(b_0 + \cdots + b_n) - (b_0+\cdots+\widehat{b_i}+\cdots+b_n) - (b_i+b_j) - (b_i + b_k) 
  \\ &=& b_0 + \cdots + \widehat{b_i} + \cdots + \widehat{b_j} +
\cdots + \widehat{b_k} + \cdots + b_n.
\end{eqnarray*}
In general, we compute that 
\begin{equation}\label{aSformula}
  a_S \:=\:
  \begin{cases}
    \;\displaystyle\sum_{i \in S} b_i & \text{if } |S| \text{ is even}, \\[16.5pt]
    \;\displaystyle\sum_{i \notin S} b_i & \text{if } |S| \text{ is odd},
  \end{cases}
\end{equation}
where in the second sum $i$ ranges over all values not in $S$ including $i=0$. 
Indeed, for any $S \subseteq \{1, \ldots, n\}$ and
$j,k \notin S$, the identities
\begin{eqnarray*}
  a_S + a_{S \cup \{j\}} + a_{S \cup \{k\}} + a_{S \cup \{j,k\}}
  &\!\!=\!\!& \begin{cases}
  \,\displaystyle{\sum_{i \in S} b_i + \!\sum_{i \notin S \cup \{j\}} b_i + \!\sum_{i \notin S \cup \{k\}} b_i +
    \!\sum_{i \in S \cup \{j,k\}} b_i} &  \text{if } |S| \text{ is even}
    \\[18pt]
     \,\displaystyle{\sum_{i \notin S} b_i + \!\sum_{i \in S \cup \{j\}} b_i + \!\sum_{i \in S \cup \{k\}} b_i +
    \!\sum_{i \notin S \cup \{j,k\}} b_i} &  \text{if } |S| \text{ is odd}
    \end{cases} \\[10pt]
  &\!\!=\!\!& 2 \sum_i b_i
\end{eqnarray*}
imply the formula $(\ref{aSformula})$ by induction on $|S|$.

Conversely, if $a_S$ satisfies $(\ref{aSformula})$ for all $S$, then  checking directly that 
\begin{equation}
a_S + a_{S \cup \{j\}} + a_{S \cup \{k\}} + a_{S \cup \{j,k\}} = 2 \sum_i b_i,
\end{equation} 
for all $S \subseteq \{1, \ldots, n\}$ and
$j,k \notin S,$ shows that $A$ is face-magic.

We have proven the following theorem.

\begin{theorem}
\label{thm:coordinates}
An $n$-dimensional $2\times2\times\cdots\times2$ face-magic hypercube $A=(a_S)_{S\subseteq\{1,\dots,n\}}\in U_n(F)$ can be expressed uniquely in the form $A=A(a_\emptyset,b_0,b_1,\dots,b_n)$, where
\begin{equation}\label{mainasformula}
a_S \;=\;
\begin{cases}
a_\emptyset + \displaystyle\sum_{i\in S} b_i, & |S| \text{ even}\\[16pt]
a_\emptyset + \displaystyle\sum_{i\notin S} b_i, & |S| \text{ odd}
\end{cases}
\end{equation}
for a unique vector of arbitrary constants $(a_\emptyset,b_0,b_1,\dots,b_n)\in F^{n+2}$. Hence the $F$-vector space $U_n(F)$ of face-magic $n$-dimensional hypercubes over any field $F$ of characteristic not equal to~$2$ is $(n+2)$-dimensional, with basis $A(1,0,\dots,0),\,\ldots\,,A(0,\ldots,0,1)$.
\end{theorem}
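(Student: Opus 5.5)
The plan is to show that the linear map
\[
\Phi:F^{n+2}\to V_n(F),\qquad (a_\emptyset,b_0,\dots,b_n)\mapsto A(a_\emptyset,b_0,\dots,b_n)
\]
is injective with image exactly $U_n(F)$. The derivation just above already provides most of the ingredients, so the proof mainly organizes them.

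First, I will check that the image of $\Phi$ lies in $U_n(F)$. This is the direct verification already displayed. Fix $S$ and $j,k\notin S$. The four sets $S$, $S\cup\{j\}$, $S\cup\{k\}$, $S\cup\{j,k\}$ alternate in parity (even, odd, odd, even, or the reverse). Summing their formulas in (\ref{mainasformula}) therefore gives $4a_\emptyset$ plus the following:
\begin{itemize}
\item in the even-$|S|$ case, each $b_i$ with $i\in S$ is counted twice, via $S$ and $S\cup\{j,k\}$;
\item each $b_i$ with $i\notin S\cup\{j,k\}$, including $i=0$, is counted twice, via the two odd sets;
\item $b_j$ is counted once in $S\cup\{k\}$ and once in $S\cup\{j,k\}$, and $b_k$ symmetrically.
\end{itemize}
The face sum is thus $4a_\emptyset+2\sum_{i=0}^n b_i$, independent of the face. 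The odd-$|S|$ case is the same computation with the roles swapped.

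Second, I will show that every $A\in U_n(F)$ is in the image. Apply the projection $\pi$ to reduce to $A\in U_n^0(F)$, recording $a_\emptyset$ separately. Given $a_{\{1\}},\dots,a_{\{n\}}$ and $C$, the system (\ref{firsteq})--(\ref{secondeq}) has determinant $2$. Since $\mathrm{char}\,F\neq 2$, this determinant is invertible, so the system determines unique $b_0,\dots,b_n$. I then prove (\ref{aSformula}) by induction on $|S|$. The cases $|S|\le 1$ hold by construction, with $a_\emptyset=0=\sum_{i\in\emptyset}b_i$. For $|S|\ge 2$, write $S=T\cup\{j,k\}$ and use the face relation
\[
a_{T\cup\{j,k\}}=C-a_T-a_{T\cup\{j\}}-a_{T\cup\{k\}},
\]
where the right-hand side is known by induction. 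The identity from the first step shows the result is the formula (\ref{aSformula}) for $S$. Since the formula depends only on $S$, the choice of $j,k$ does not matter. Restoring $a_\emptyset$ shows that $A=\Phi(a_\emptyset,b_0,\dots,b_n)$.

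Third, I will prove injectivity. Suppose $\Phi(a_\emptyset,b)=0$. Then $a_\emptyset=0$ and all $a_{\{i\}}=0$. The magic constant is $2\sum b_i$, and it equals any face sum, which is $0$. The invertibility of the $(n+1)\times(n+1)$ system then forces $b=0$. Consequently $\Phi$ is a linear isomorphism $F^{n+2}\cong U_n(F)$, and the images of the standard basis vectors, namely $A(1,0,\dots,0),\dots,A(0,\dots,0,1)$, form a basis.

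The main point needing care is the well-definedness in the induction: a set $S$ can be reached from different pairs $j,k$. The induction avoids any conflict because it proves a closed formula, rather than defining $a_S$ recursively. The only place the hypothesis $\mathrm{char}\,F\neq 2$ enters is the determinant computation, which is the substantive input for both uniqueness and existence of the $b_i$.
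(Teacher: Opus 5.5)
Your proposal is correct and is essentially the paper's own proof. In both, the $b_i$ are pinned down by the determinant-$2$ system (\ref{firsteq})--(\ref{secondeq}), formula (\ref{aSformula}) is proved by induction on $|S|$ from the face relation and the face-sum identity, and the converse is the same direct check; your separate injectivity step only spells out the uniqueness that the paper reads off from the invertible system.

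One caveat, shared with the paper: both the determination of the $b_i$ from $C$ and your injectivity step (``it equals any face sum'') need at least one $2\times2$ face, i.e.\ $n\ge2$. For $n\le1$ the constant $C$ is not determined by $A$, and the claimed dimension $n+2$ is actually false: for instance $U_1(F)=V_1(F)$ is $2$-dimensional, and $b_1$ does not appear in $A(a_\emptyset,b_0,b_1)$. So the statement should be read with $n\ge2$.
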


\medskip
When $n$ is even, this set of coordinates $(a_\emptyset,b_0,\dots,b_n)$ on $U_n(F)$ immediately shows that every $A\in U_n(F)$ exhibits the face-magic folded-hypercube structure of Theorem~\ref{thm:folded}.  
Indeed, for a set $S\subseteq\{1,\ldots,n\}$, let us write 
$\overline{S}:=\{1,\ldots,n\}-S$.  
Then the sum of the four entries of a {\it new} $2\times 2$ square (i.e., 4-cycle) on $A$, when now viewed as a folded hypercube, takes the form
\begin{equation}\label{foldcheck}
a_S+a_{S\cup\{j\}}+a_{\overline{S}}+a_{\overline{S}-\{j\}}
\end{equation}
for some $S\subseteq\{1,\ldots,n\}$ and $j\notin S$. 
Since $n$ is even, the sets $S$, $S\cup\{j\}$, $\overline{S}$, $\overline{S}-\{j\}$ have alternating parity of cardinality;  hence, using  (\ref{aSformula}) on $\pi(A)$, we see that the coefficients of $b_0$, $b_j$, and indeed all other $b_i$'s in (\ref{foldcheck}) are all equal to~2.  Hence $A$ is a {\it face-magic folded hypercube}, i.e., all 4-cycles on $A$, even when viewed as a folded hypercube, have the same sum $4a_\emptyset+2(b_0+\cdots+b_n)$.

When $n$ is odd, our choice of coordinates similarly exhibits a
face-magic hierarchical-folded-hypercube structure, provided that some
$b_j$ with $j\in\{1,\ldots,n\}$ is equal to $0$. Indeed, consider the two
slices of $A$ where the $j$th coordinate is fixed to be $0$ or $1$,
respectively; that~is, the entries of the two slices take the form $a_S$ and $a_{S\cup\{j\}}$ for $S\subseteq\{1,\ldots,n\}-\{j\}$. Since $n-1$ is even, $S$ and $\overline{S}-\{j\}$ always have the same
parity of cardinality; hence, for 
$k\notin S$ and $k\neq j$, using
(\ref{aSformula}) we see,  by checking the coefficient of $b_0$, $b_j$, $b_k$, and all other $b_i$'s, that
\begin{equation}
  a_S+a_{\overline{S}-\{j\}}+a_{\overline{S}-\{j,k\}}+a_{S\cup\{k\}}
  =4a_\emptyset+2(b_0+\cdots+b_n),
\end{equation}
\begin{equation}
  a_{S\cup\{j\}}+a_{\overline{S}}+a_{\overline{S}-\{k\}}+a_{S\cup\{j,k\}}
  =4a_\emptyset+2(b_0+\cdots+b_n).
\end{equation}
Therefore, each slice, augmented to a folded hypercube $FQ_{n-1}$, has the same
constant $4$-cycle sum $4a_\emptyset+2(b_0+\cdots+b_n)$.
It remains to check the $4$-cycles crossing between the two slices, of
the form $S,\ \overline{S}-\{j\},\ \overline{S},\ S\cup\{j\}$. Using
(\ref{aSformula}) once more, we see that 
\begin{equation}
  a_S+a_{\overline{S}-\{j\}}+a_{\overline{S}}+a_{S\cup\{j\}}
  =4a_\emptyset+2(b_0+\cdots+\widehat{b_j}+\cdots+b_n)
  +\begin{cases}0,&|S|\text{ even},\\4b_j,&|S|\text{ odd}.\end{cases}
\end{equation}
This equals the common slice sum $4a_\emptyset+2(b_0+\cdots+b_n)$ for
both parities of $|S|$ at once if and only if $b_j=0$.  In that case, $A$
becomes a face-magic hierarchical folded hypercube of dimension $n$.

Thus we have proven the following theorem.

\begin{theorem}\label{thm:extra}
If $n$ is even, then any face-magic hypercube of dimension $n$ is automatically a face-magic folded hypercube of dimension $n$.

If $n$ is odd, then a face-magic hypercube $A(a_\emptyset,b_0,\ldots,b_n)$ satisfying $b_j = 0$ for some $j\in\{1,\ldots,n\}$ is automatically a face-magic hierarchical $($in the $j$-th direction$)$ folded hypercube of dimension $n$.
\end{theorem}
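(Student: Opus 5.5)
The plan is to verify the two statements of Theorem~\ref{thm:extra} directly in the coordinates of Theorem~\ref{thm:coordinates}, as in the computations just preceding it. First I reduce to $a_\emptyset=0$ via the projection $\pi$: every entry shifts by the same constant $a_\emptyset$, so every $4$-cycle sum shifts by $4a_\emptyset$, and the ``same sum on every 4-cycle'' property is unaffected. Thus it suffices to treat $A\in U_n^0(F)$, with $a_S$ given by (\ref{aSformula}).

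For $n$ even, the new $4$-cycles of $FQ_n$ are exactly those of the form $S,\ S\cup\{j\},\ \overline{S},\ \overline{S}-\{j\}$ with $j\notin S$. When $n$ is even, $|S|$ and $|\overline S|$ have the same parity, so the four sets alternate in parity. I will compute the coefficient of each $b_i$ in the sum, splitting into three types of index: $i=0$, $i=j$, and $i\neq 0,j$ (the last according to whether $i\in S$). In every case the coefficient comes out to $2$. Hence the sum is $2\sum b_i=C$, matching the ordinary faces.

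For $n$ odd with $b_j=0$, each slice (fixing coordinate $j$) is a $Q_{n-1}$, and the antipodal map within a slice sends $S\mapsto \overline S-\{j\}$. The $4$-cycles of $H_jFQ_n=K_2\times FQ_{n-1}$ fall into three classes:
\begin{itemize}
\item ordinary faces, which are already handled;
\item new $4$-cycles inside a slice, of the form $S,\ S\cup\{k\},\ \overline S-\{j,k\},\ \overline S-\{j\}$, and their $j$-translates;
\item cycles across the slices, of the form $S,\ S\cup\{j\},\ \overline S,\ \overline S-\{j\}$.
\end{itemize}
For the second class, since $n-1$ is even the same alternating-parity bookkeeping as before gives coefficient $2$ on every $b_i$. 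For the third class I compute the sum as $2\sum_{i\ne j}b_i+\{0\text{ or }4b_j\}$, depending on the parity of $|S|$, and the hypothesis $b_j=0$ makes this equal to $C$.

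The main obstacle is bookkeeping rather than ideas. One must confirm that the listed forms exhaust all $4$-cycles of $FQ_n$ and $H_jFQ_n$, in particular that no $4$-cycle uses two antipodal edges; I would check this via the $\mathbb F_2^{n+1}/\langle e_0+\cdots+e_n\rangle$ description. One must also carry out the parity-by-case coefficient counts carefully.
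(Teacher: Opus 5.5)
Your proposal is correct and is essentially the paper's own proof. You reduce to $a_\emptyset=0$, check via the parity pattern that every $b_i$ gets coefficient $2$ on the new $4$-cycles $S,\,S\cup\{j\},\,\overline S,\,\overline S-\{j\}$ (for $n$ even) and on the in-slice folded cycles (for $n$ odd), and observe that the cross-slice cycles sum to $2\sum_{i\ne j}b_i+\{0\text{ or }4b_j\}$, which forces exactly $b_j=0$.

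One correction to your exhaustiveness remark: each of your listed new $4$-cycles uses exactly two non-adjacent antipodal edges (two adjacent ones would backtrack). So what must be ruled out is a $4$-cycle with a single antipodal edge, i.e.\ three cube edges joining $v$ to its antipode. That would require folded dimension $3$, which cannot occur since $FQ_n$ ($n$ even) and the slices $FQ_{n-1}$ ($n$ odd) have even dimension. The paper leaves this check implicit.
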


In the next section, we apply these results to understand  magic-faced hypercubes over the integers.

\vspace{.085in}
\begin{remark}
\label{rem:fourier}
The proofs of Theorems~\ref{thm:coordinates} and \ref{thm:extra} have the following interpretation in terms of Fourier analysis on $(\mathbb Z/2\mathbb Z)^n$, which is indeed how we first discovered these theorems. 

Let $F$ be an abelian group in which the doubling map $x\mapsto 2x=x+x$ is bijective. (A~field of characteristic not $2$ is an example, though no multiplicative structure on $F$ is needed or used anywhere below.) For a function $A:(\mathbb Z/2\mathbb Z)^n\to F$, define its Fourier transform
\begin{equation}
\hat A(y) \;=\; \sum_{x\in(\mathbb Z/2\mathbb Z)^n} (-1)^{\langle x,y\rangle} A(x), \qquad y\in(\mathbb Z/2\mathbb Z)^n,
\end{equation}
where $\langle x,y\rangle=\sum_i x_iy_i\in\mathbb Z/2\mathbb Z$.

Then $A$ is face-magic if and only if $\hat A(y)=0$ for every $y$ having weight in the set $\{1,2, \ldots, n-2\}$. In other words, $\hat A$ is supported only on weights $0$, $n-1$, and $n$. This immediately proves the dimension count of Theorem~\ref{thm:coordinates}: there are $1+n+1=n+2$ such $y\in (\mathbb Z/2\mathbb Z)^n$, implying that $\dim_F U_n(F)=n+2$.  

Write $\mathbf 0:=(0,\ldots,0)\in(\mathbb Z/2\mathbb Z)^n$.  Let $e_0:=(1,\ldots,1)\in(\mathbb Z/2\mathbb Z)^n$. For $k=1,\ldots,n$, let $e_k\in(\mathbb Z/2\mathbb Z)^n$ be the vector with a $0$ in position $k$ and $1$'s in every other position. Thus $\mathbf 0, e_0,\ldots,e_n$ are exactly the $n+2$ values of $y\in (\mathbb Z/2\mathbb Z)^n$ having weight 0, $n$, or $n-1$.  
Since doubling is bijective on $F$, so is $x\mapsto 2^nx$. Hence the usual Fourier inversion formula holds: 
\begin{equation}\label{finversion}
A(x) \;=\; \frac{1}{2^n}\sum_{y\in(\mathbb Z/2)^n} (-1)^{\langle x,y\rangle}\,\hat A(y).
\end{equation}
As $\hat A$ vanishes outside $\{\mathbf 0,e_0,\ldots,e_n\}$, this collapses to
\begin{equation}\label{inft}
A(x) \;=\; \frac{1}{2^n}\left[\hat A(\mathbf 0) + \sum_{k=0}^n \hat A(e_k)\,(-1)^{\langle x,e_k\rangle}\right].
\end{equation}
That is, any face-magic hypercube of dimension $n$ over $F$ is spanned by the all-1's hypercube (corresponding to $y=\mathbf 0$) together with the hypercubes given by $(-1)^{\langle x,e_k\rangle}$ (corresponding to $y=e_k$)  for~$k=0,1,\ldots,n$. 
This gives a natural set of $n+2$ independent coordinates on face-magic hypercubes of dimension $n$ with entries in $F$, namely, $\hat A(\mathbf 0), \hat A(e_0),\ldots,\hat A(e_n)$; these can take arbitrary values in $F$ to specify a face-magic hypercube of dimension~$n$ with entries in $F$, via formula $(\ref{inft})$. 

Theorem~\ref{thm:coordinates} used a different but related set of coordinates (designed to translate the classification of magic-faced hypercubes with consecutive integer entries into an elegant number theory problem, as studied in Section~\ref{sec:mfhproofs}). The relation is as follows. Let  
\begin{equation}a_\emptyset:=A(\mathbf 0)
\end{equation}
and 
\begin{equation}
b_k:=-\frac{\hat A(e_k)}{2^{n-1}}
\end{equation}
for $k=0,\ldots,n$. 
 Evaluating (\ref{inft}) at $x=\mathbf 0$ gives 
\begin{equation}
A(\mathbf 0)=2^{-n}\bigl[\hat A(\mathbf 0)+\sum_k\hat A(e_k)\bigr].
\end{equation}  
Therefore, for each $S\subseteq\{1,\ldots,n\}$ with indicator vector $x_S$, we see that formula~(\ref{inft}), in terms of the coordinates $a_\emptyset, b_0,\ldots,b_n$, becomes 
\begin{equation}\label{mainasformula2}
a_S \;=\; a_\emptyset + \frac12\sum_{k=0}^n b_k\Bigl(1-(-1)^{\langle x_S,\,e_k\rangle}\Bigr). 
\end{equation}
This is equivalent to formula (\ref{mainasformula}), yielding   
Theorem~\ref{thm:coordinates}. 

Theorem~\ref{thm:extra} also then follows directly from formula (\ref{inft}). Indeed, the all-1's hypercube is clearly a face-magic $FQ_n$; and, for any $y\in\{e_0,\ldots,e_n\}$, the hypercube $x\mapsto(-1)^{\langle x,y\rangle}$ is also a face-magic $FQ_n$ when $n$ is even.
Since being a face-magic $FQ_n$ is a linear condition, any $A=(a_S)_{S\subset\{1,\ldots,n\}}$ defined by (\ref{inft}) or (\ref{mainasformula2}) is also always then a face-magic folded hypercube of dimension~$n$.  \pagebreak 
This perspective also isolates associativity cleanly: 
the all-1's hypercube and the hypercube $x\mapsto(-1)^{\langle x,e_k\rangle}$ for any $k\geq 1$ is associative, while $x\mapsto(-1)^{\langle x,e_0\rangle}$ is not associative. Since associativity is again a linear condition, the hypercube $A$ in (\ref{mainasformula2}) or (\ref{mainasformula}) is associative if and only if $b_0=0$.

Similarly, if $n$ is odd, then the hypercube $x\mapsto(-1)^{\langle x,e_k\rangle}$ for $k\ge1$ is a face-magic $H_jFQ_n$ (in the sense of Theorem~\ref{thm:extra}) for every direction $j\ne k$, but fails to be so for the direction $j=k$ itself.  By contrast, the all-1's hypercube and the hypercube $x\mapsto(-1)^{\langle x,e_0\rangle}$ are face-magic $H_jFQ_n$'s for every direction $j$. Thus the hypothesis $b_j=0$ in Theorem~\ref{thm:extra} is precisely the condition required to ensure that the hypercube $A$ in (\ref{mainasformula2}) or (\ref{mainasformula}) is a face-magic $H_jFQ_n$, yielding Theorem~\ref{thm:extra} for odd~$n$.
\end{remark}

\section{Magic-faced hypercubes over the integers: Proofs of Theorems~\ref{thm:main}--\ref{thm:associative}}
\label{sec:mfhproofs}

We now apply the results of Section~\ref{sec:coordinates} to classify face-magic  hypercubes $A$ of dimension~$n$ having the consecutive integer entries $1,\ldots,2^n$.  It will be convenient in the sequel to subtract~1 from all entries, and thereby obtain a face-magic hypercube $A$ having entries $0,\ldots,2^n-1$. 
Furthermore, by a suitable symmetry of the hypercube graph underlying $A=(a_S)_{S\subseteq\{1,\ldots,n\}}$, we may assume that $a_\emptyset$ is the smallest entry 0 of the hypercube $A$.  Thus $A$ is an element of~$U_n^0(\mathbb Q)$. 

We now apply Theorem~\ref{thm:coordinates} to classify such face-magic matrices $A\in U_n^0(\mathbb Q)$ with $a_\emptyset=0$ and entries $0,\ldots,2^{n}-1$. The choice of coordinates in the previous section was  carefully designed to make this a tractable number theory problem---and the number theory problem turns out to have a very elegant answer. 

Indeed, with the choice of coordinates given in Theorem~\ref{thm:coordinates}, we have $a_\emptyset=0$, and so  only the coordinates $b_0,\ldots,b_n$ remain.  We break naturally into two cases: $n$ even and $n$ odd. 

\subsection{The parity of $n$ is even}

If $n$ is even, then the set of entries of $A(0,b_0,\ldots,b_n)$ (as given by Theorem~\ref{thm:coordinates}) exhibits a beautiful structure: it consists of all sums of even-sized subsets of $\{b_0,\ldots,b_n\}$.  The question then reduces to the following: for what values of $b_0,\ldots,b_n$ are the $2^n$ sums of even-sized subsets of $\{b_0,\ldots,b_n\}$ equal to the consecutive integers $0,\ldots,2^n-1$ in some order?

This question is answered by the following combinatorial lemma.

\begin{lemma}\label{lemma1}
Let $n\geq 0$ be an even integer. Suppose $B=\{b_0,\ldots,b_n\}$ is a set of $n+1$ real numbers such that the $2^n$ sums of even-sized subsets of $B$ yield $0,\ldots,2^{n}-1$ in some~order.  Then $\{b_0,\ldots,b _n\}=\{0\} \cup \{1,2,4,\ldots,2^{n-1}\}$.
\end{lemma}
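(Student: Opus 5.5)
The plan is to encode the subset sums in generating functions and read off the structure of $B$ from them. Work in the group ring $\mathbb Z[x^{\mathbb R}]$ of finite sums $\sum_r m_r x^r$ with $r\in\mathbb R$. This ring is an integral domain. It embeds into functions of $t$ via $x=e^t$, and the embedding is injective because the exponentials $e^{rt}$ are linearly independent. Put
\[
P(x)=\prod_{i=0}^n\bigl(1+x^{b_i}\bigr),\qquad M(x)=\prod_{i=0}^n\bigl(1-x^{b_i}\bigr),\qquad \Sigma_N(x)=1+x+\cdots+x^{N-1}.
\]
Then $\tfrac12(P+M)$ is the generating function of even-sized subset sums, and $\tfrac12(P-M)$ is that of odd-sized subset sums. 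By hypothesis $\tfrac12(P+M)=\Sigma_{2^n}$.

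First I determine the odd part. Since $n+1$ is odd, complementation is a bijection from even-sized subsets to odd-sized subsets. It sends a sum $s$ to $T-s$, where $T=\sum b_i$. So the odd subset sums are $T-(2^n-1),\dots,T$, and hence $\tfrac12(P-M)=x^{c}\Sigma_{2^n}$ with $c=T-2^n+1$. Solving the two equations gives
\[
P=(1+x^c)\,\Sigma_{2^n},\qquad M=(1-x^c)\,\Sigma_{2^n}.
\]

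Next, and this is the key step, I show that some $b_i=0$. Substitute $x=e^t$ and compare orders of vanishing at $t=0$. If no $b_i$ is $0$, each factor $1-e^{tb_i}$ vanishes to order exactly $1$, so $M$ vanishes to order exactly $n+1$. On the other side, $\Sigma_{2^n}(1)=2^n\neq0$. So $(1-e^{ct})\Sigma_{2^n}(e^t)$ vanishes to order exactly $1$ if $c\ne0$, and is identically $0$ if $c=0$. When $n\ge2$, the order $n+1\ge 3$ matches neither case, which is a contradiction. (The case $n=0$ is trivial: $B=\{0\}$.) Hence some $b_i=0$; relabel so that $b_0=0$. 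Then $M\equiv0$, which forces $c=0$. Consequently $P=2\prod_{i\ge1}(1+x^{b_i})=2\Sigma_{2^n}$. In words: the $2^n$ subset sums of $\{b_1,\dots,b_n\}$ are exactly $0,1,\dots,2^n-1$, each occurring once.

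It remains to show that this forces $\{b_1,\dots,b_n\}=\{1,2,\dots,2^{n-1}\}$, which I do by induction on $n$.
\begin{itemize}
\item Each singleton $\{b_i\}$ is a subset, so each $b_i\ge0$. The sums are distinct, so $b_i>0$.
\item The sum $1$ must be attained, and the smallest positive subset sum is $\min_i b_i$. So some $b_i$, say $b_1$, equals $1$.
\item Cancel the factor $1+x$, which is legitimate in the integral domain, using $\Sigma_{2^n}(x)=(1+x)\,\Sigma_{2^{n-1}}(x^2)$. This gives $\prod_{i\ge2}(1+x^{b_i})=\Sigma_{2^{n-1}}(x^2)$.
\item Rescale exponents by $\tfrac12$ (a ring automorphism). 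Then the numbers $b_i/2$ for $i\ge2$ have subset sums exactly $0,\dots,2^{n-1}-1$.
\end{itemize}
By induction, $\{b_i/2\}_{i\ge2}=\{1,2,\dots,2^{n-2}\}$. Together with $b_1=1$ and $b_0=0$, this yields the claim.

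I expect the order-of-vanishing argument forcing $0\in B$ to be the main obstacle. It is the only step that uses that $n$ is even (via $n+1$ odd, for complementation) and $n\ge2$. It also explains why exceptional solutions can appear in the other parity.
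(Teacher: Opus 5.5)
Your argument is correct for even $n\ge 2$, and it takes a genuinely different route from the paper's.

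\textbf{The paper's route.} It sorts $b_0\le\cdots\le b_n$ and works with the smallest values. If $b_0\neq0$, every even-sized subset of size $\ge4$ has sum $\ge3$. So $1$, $2$ and (by an overlap argument) $3$ must be pair sums, which forces $b_0+b_1=1$, $b_0+b_2=2$, $b_0+b_3=3$, hence $b_3=b_0+b_1+b_2$. For $n\ge4$ this gives the collision $b_3+b_4=b_0+b_1+b_2+b_4$, so $b_0\ne0$ can only happen when $n=3$. The case $b_0=0$ is reduced to Lemma~\ref{lem:auxiliary}, proved by the greedy ``smallest unattained value'' induction. This route is completely elementary and handles all $n$ at once. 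It yields the full classification of Proposition~\ref{lem:even}, including the sporadic set $\{-\tfrac12,\tfrac32,\tfrac52,\tfrac72\}$ at $n=3$; parity only enters at the very end.

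\textbf{Your route.} You build parity in from the start. Complementation (available because $n+1$ is odd) identifies the odd-sum generating function as $x^c\Sigma_{2^n}$. You then compare orders of vanishing at $t=0$: $\prod_i(1-e^{tb_i})$ vanishes to order exactly $n+1$ if $0\notin B$, while $(1-e^{ct})\Sigma_{2^n}(e^t)$ vanishes to order at most $1$ or is identically zero. This forces $0\in B$ with no case analysis. It explains conceptually why even $n$ admits no exceptional solutions, though it says nothing about the odd-$n$ ones.

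Your key step is also more general than needed. The only feature of the target you use is that its generating function is palindromic; nonvanishing at $x=1$ is automatic, since there are $2^n$ even-sized subsets. So the same step shows $0\in B$ whenever the even-sized subset sums of $B$ form any centrally symmetric multiset. That reduces such cases to recovery from all subset sums of $B\setminus\{0\}$. Your final step, cancelling $1+x$ and halving exponents, is a tidy generating-function version of Lemma~\ref{lem:auxiliary}.

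\textbf{One correction.} Your parenthetical claim that $n=0$ is trivial with $B=\{0\}$ is false. For $n=0$ the only even-sized subset of $\{b_0\}$ is $\emptyset$, so the hypothesis holds for every $b_0\in\mathbb R$. Indeed, your own identity $M=(1-x^c)\Sigma_1$ holds with $c=b_0$ for all $b_0$. The lemma as literally stated fails at $n=0$. The paper records this in Proposition~\ref{lem:even}(a) and deduces Lemma~\ref{lemma1} only for $n+1\ge2$. Your proof covers exactly the range where the statement is true.
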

\noindent
It is not hard to see, by the uniqueness of binary expansions, that this choice of $\{b_0,\ldots,b _n\}$ does indeed give the desired even-sized subset sums.  However, somewhat surprisingly, the fact that there are no other solutions requires the condition that $n$ is even!  Specifically, if $n=3$, then one may take $\{b_0,b_1,b_2,b_3\}=\{-\frac12,\frac32,\frac52,\frac72\}$; the even-sized subset sums of this set then give the eight integers 0,1,2,3,4,5,6,7.  However, no such counterexamples exist once~$n\geq 4$. We will prove this statement, and in particular, Lemma~\ref{lemma1}, in Section~\ref{sec:lemmas}.

\pagebreak

Taking Lemma~\ref{lemma1} to be true for the time being, we conclude that for $A=A(0,b_0,\ldots,b_n)$ to be a face-magic hypercube of dimension $n$ with $n$ even and entries $0,\ldots,2^n-1$, we must have that $b_0,\ldots,b_n$ is a permutation of $0,1,2,4,\ldots,2^{n-1}$.  Furthermore, the resulting action of a permutation in $S_{n+1}$ on $\{b_0,\ldots,b_n\}$ (and thereby on the entries of $A$ via the formulas in Theorem~\ref{thm:coordinates}) corresponds exactly to the action of the automorphism of the folded hypercube graph induced by permuting the $n+1$ edges emanating from $a_\emptyset$ when $A$ is viewed as a face-magic folded hypercube of dimension~$n$. Indeed, pairs of entries adjacent in the folded hypercube $A(0,b_0,\ldots,b_n)$ remain adjacent after the permutation in $S_{n+1}$ is applied to $b_0,\ldots,b_n$. 

There is a group of symmetries of the folded hypercube of dimension $n$ that is complementary to this $S_{n+1}$, namely, the $(\mathbb Z/2)^n$ that flips the slices of the hypercube in each direction and thereby moves the 0 entry to the $2^n$ other entries.  The group $C_2^n\rtimes S_{n+1}$ represents the full symmetry group of the folded hypercube of dimension $n$.  We have proven that there is exactly one magic-faced folded hypercube of dimension $n$, up to symmetries of the folded hypercube, as desired.  Theorems~\ref{thm:main}--\ref{thm:folded2} thus follow in the case of even $n$, assuming the truth of Lemma~\ref{lemma1} which we will establish in the next section. 

\vspace{.1in}
\noindent
{\bf Associativity.} Finally, we determine which elements of the $C_2^n\rtimes S_{n+1}$-orbit of magic-faced folded hypercubes of dimension $n$ are associative when $n$ is even. We have proven that there is a magic-faced folded hypercube $A_n$ of dimension $n$ which is unique up to automorphisms of the underlying graph $FQ_n$.  The proof shows that pairs of numbers in $A_n$ that add up to $2^n+1$ are always adjacent to each other in this folded hypercube.  Moreover, if these adjacencies (edges) in~$A_n$ are removed, what remains is an associative magic-faced hypercube of dimension $n$.  By Theorem~\ref{thm:folded2}(b), this magic-faced hypercube is therefore the unique associative magic-faced hypercube of dimension $n$ up to symmetries of the hypercube. 

Alternatively, if $n$ is even, then using (\ref{aSformula}) we see that for any magic-faced hypercube $A=(a_S)$, we have 
\begin{equation}
a_S+a_{\overline{S}} \;=\; \begin{cases} 2^n-1-b_0, & |S| \text{ even},\\ 2^n-1+b_0, & |S| \text{ odd}. \end{cases} 
\end{equation}
for all $S\subset\{1,\ldots,n\}$. For $a_S+a_{\overline{S}}$ to be a constant across $|S|$ even and $|S|$ odd, we require $b_0=0$. Thus $A(0,b_1,\ldots,b_n)$, where $\{b_1,\ldots,b_n\}=\{1,2,4,\ldots,2^{n-1}\}$, gives a magic-faced hypercube with entries $0,\ldots,2^n-1$, and all such hypercubes form a single orbit for the action of the group $\Aut(Q_n)=C_2^n\rtimes S_{n} \subset C_2^n\rtimes S_{n+1} = \Aut(FQ_n)$, where $S_n\subset S_{n+1}$ acts on $\{b_1,\ldots,b_n\}$ while fixing $b_0$. This completes the proof of Theorem~\ref{thm:associative} for even $n$, again assuming the truth of Lemma~\ref{lemma1} which we will prove in the next section.

\subsection{The parity of $n$ is odd}

If $n$ is odd, then the set of entries of $A(0,b_0,\ldots,b_n)$ exhibits an
analogous structure: writing~$C$ for the set of sums of even-sized
subsets of $\{b_1,\ldots,b_n\}$, the entries of $A$ are exactly
$$C\,\cup\,(b_0+C).$$ The question then reduces to the following: for what
values of $b_0,\ldots,b_n$ is $C\,\cup\,(b_0+C)$ equal to the consecutive
integers $0,\ldots,2^n-1$ in some order?

This question is answered by the following combinatorial lemma.
\begin{lemma}\label{lemma2}
Let $n\ge3$ be an odd integer, and let $b_0,\ldots,b_n$ be real numbers
such that $C\,\cup\,(b_0+C)=\{0,\ldots,2^n-1\}$, where $C$ is the set of
sums of even-sized subsets of $\{b_1,\ldots,b_n\}$. Then
$\{b_0,\ldots,b_n\}=\{0\}\cup\{1,2,4,\ldots,2^{n-1}\}$, with $b_0\ne0$.
\end{lemma}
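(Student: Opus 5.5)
Our plan is to reduce Lemma~\ref{lemma2} to a statement about subset sums of $n-1$ numbers, and then use a generating-function (or Fourier) argument to pin the numbers down.

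\textbf{Reduction.} Since $|C\cup(b_0+C)|=2^n$ and $|C|\le 2^{n-1}$, the $2^{n-1}$ even-sized subset sums of $\{b_1,\ldots,b_n\}$ are distinct, and $C$ and $b_0+C$ are disjoint. The element $0=a_\emptyset$ lies in $C$. Set $c_i=b_i+b_n$ for $i<n$. The even-sized subsets of $\{1,\ldots,n\}$ correspond bijectively to arbitrary subsets $T\subseteq\{1,\ldots,n-1\}$: adjoin $n$ exactly when $|T|$ is odd. Hence
\[
C=\Bigl\{\textstyle\sum_{i\in T}b_i+\epsilon_T b_n\Bigr\},
\]
which is still awkward. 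Instead, we encode everything in polynomials. Put
\[
P(x)=\sum_{c\in C}x^c .
\]
The hypothesis says $P(x)(1+x^{b_0})=(x^{2^n}-1)/(x-1)$, read as an identity of generalized (real-exponent) polynomials. Since the even subset sums are given by $\tfrac12\bigl[\prod_{i=1}^n(1+x^{b_i})+\prod_{i=1}^n(1-x^{b_i})\bigr]$, we obtain
\[
\Bigl[\prod_{i=1}^n(1+x^{b_i})+\prod_{i=1}^n(1-x^{b_i})\Bigr](1+x^{b_0})=2\,\frac{x^{2^n}-1}{x-1}.
\]

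\textbf{Key steps.} First, integrality. All sums $b_i+b_j$ with $i,j\ge1$ are integers, and $b_0$ is an integer because $b_0+0\in\{0,\ldots,2^n-1\}$. Since $n\ge3$ is odd, a sum of three pairs gives $2b_i\in\mathbb Z$, so every $b_i$ with $i\ge1$ is an integer or every one is a half-integer.

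Second, the half-integer case (the analogue of the sporadic $n=3$ solution of Lemma~\ref{lemma1}) must be ruled out here, and the disjoint-union structure should do it. The smallest elements $0,1$ of the interval give a constraint. The minimum of $C$ is $0$ after a suitable normalization (we may order $b_1\le\cdots\le b_n$). Then $1$ lies either in $C$ or in $b_0+C$. We compare the minimal element with the second-minimal element, and likewise at the top end near $2^n-1$.

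Third, the clean route in the integer case is to substitute $x=-1$ and roots of unity. Take $x=\zeta$ a primitive $2^k$-th root of unity with $k\le n$. The right side then vanishes, so one of $1+\zeta^{b_0}$, $\prod(1+\zeta^{b_i})+\prod(1-\zeta^{b_i})$ vanishes. Running $k=1,\ldots,n$ and arguing by the $2$-adic valuations of the $b_i$, as in the standard proof that a set whose subset sums tile an interval must be $\{1,2,\ldots,2^{m}\}$, should force the multiset of valuations of $b_0,\ldots,b_n$ to be $\{0,1,\ldots,n-1\}$ together with one zero entry. Alternatively, use a descent: the elements of $\{0,\ldots,2^n-1\}$ split by parity. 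One of the $b$'s must be odd and the rest even, since otherwise parity counts fail. Removing that odd element and halving should reduce the problem to an instance of Lemma~\ref{lemma1} or of Lemma~\ref{lemma2} in dimension $n-1$ or $n-2$. We would set up the induction so that it alternates between the even and odd problems, with small cases $n=1,2,3$ checked by hand.

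\textbf{Final identification.} Once the set is $\{0,1,2,\ldots,2^{n-1}\}$, it remains to show $b_0\neq0$. If $b_0=0$, then $C$ and $b_0+C$ coincide, contradicting disjointness. The main obstacle is the second step: excluding half-integer and other sporadic configurations for odd $n$. We expect to handle it by the parity descent, showing that exactly one odd $b_i$ exists; the symmetry $C\mapsto 2^n-1-b_0-C$ of the interval supplies the needed counting.
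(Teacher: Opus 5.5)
\textbf{There is a genuine gap: the proposal never actually determines the $b_i$.} Your integrality observation is correct, but the exclusion of the half-integer case is only announced, and both routes for the integer case end in ``should''.

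\emph{The parity descent.} The mechanism you offer for the descent is false. Parity counting does not force exactly one odd value among $b_0,\dots,b_n$. Let $r$ be the number of odd $b_i$ with $i\ge1$. Then $C$ has equally many even and odd elements when $1\le r\le n-1$, and only even elements when $r\in\{0,n\}$. So the parity distribution of $C\cup(b_0+C)$ matches that of $\{0,\dots,2^n-1\}$ as soon as $b_0$ is odd or $1\le r\le n-1$; this is all that $x=-1$ extracts from your identity. For example, $r=2$, or all of $b_0,\dots,b_n$ odd, pass your test.

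Even granting a unique odd element, halving does not always land on a rigid problem. If the odd element is $b_0$, then $C=\{0,2,\dots,2^n-2\}$, and halving gives Lemma~\ref{lemma1} in the even dimension $n-1$, which is fine. But if it is some $b_j$ with $j\ge1$, the even entries are $C'\cup(b_0+C')$, with $C'$ the even-sized subset sums of the other $n-1$ values. Halving then gives the Lemma~\ref{lemma2}-type problem in the \emph{even} dimension $n-1$. That problem has a continuum of solutions in dimension $2$ and the sporadic solutions of Table~\ref{tab:anomaly} in dimension $4$. Worse, the ``unique odd element'' principle driving your induction fails there: $b_0=1$, $\{b_1,\dots,b_4\}=\{-1,3,5,7\}$ is a solution in which all five values are odd. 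An induction alternating between the even and odd problems would have to carry these exceptional families and eliminate them using the odd entries. Nothing in your sketch does this, and it is exactly where the difficulty lies (at $n=3$ and $n=5$).

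\emph{The root-of-unity route.} This has the analogous defect. The cyclotomic proof for \emph{all} subset sums works because $\prod_i(1+x^{b_i})$ is a product, so unique factorization matches it with $\prod_{k<n}(1+x^{2^k})$ factor by factor. Here the relevant factor $\prod_i(1+x^{b_i})+\prod_i(1-x^{b_i})$ is a sum of two products. Its vanishing at a root of unity therefore does not single out any individual $b_i$, and the claimed valuation statement does not follow.

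\emph{The missing idea.} The paper uses an order-based argument, which makes integrality and parity irrelevant. Order $b_1\le\cdots\le b_n$.
\begin{itemize}
\item If $b_1=0$, then $C$ is the set of \emph{all} subset sums of $\{b_2,\dots,b_n\}$. Hence $C\cup(b_0+C)$ is the set of all subset sums of the $n$ numbers $b_0,b_2,\dots,b_n$, and Lemma~\ref{lem:auxiliary} finishes; $b_0\ne0$ is then automatic.
\item If $b_1\ne0$, every nonempty even-sized subset has positive sum. The alternating block structure of $C$ and $b_0+C$ shows that the two smallest positive elements $c_1<c_2$ of $C$ have $c_1+c_2$ as the third smallest. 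A subset of size $\ge4$ splits into two disjoint nonempty even-sized pieces with distinct positive sums, so its sum is $\ge c_1+c_2$. Hence $c_1=b_1+b_2$ and $c_2=b_1+b_3$. Moreover, $c_1+c_2$ cannot come from a larger set, since it would split into these two overlapping pairs; so it is the next pair sum. It is not $b_2+b_3$, since that forces $b_1=0$. Therefore $b_1+b_4=c_1+c_2$, i.e.\ $b_4=b_1+b_2+b_3$.
\end{itemize}
This last relation is impossible for $n=3$, where there is no $b_4$. It is also impossible for $n\ge5$, since then $b_4+b_5=b_1+b_2+b_3+b_5$ is a collision in $C$. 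For odd $n$ this disposes of every non-standard configuration, half-integral or not, at once.
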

\noindent
It is not hard to see that any such choice of $\{b_0,\ldots,b_n\}$, with
$b_0$ taken to be any one of the $n$ nonzero elements, yields
$C\,\cup\,(b_0+C)=\{0,\ldots,2^n-1\}$. (Taking $b_0=0$ results in 
$C=b_0+C$, so their union has only $2^{n-1}$ elements rather than $2^n$.)
However, in this case, the fact that there are no other solutions requires the condition that $n$ is odd!  For example, if $n=4$, then one may take $b_0=8$ and $\{b_1,b_2,b_3,b_4\}=\{-\frac12,\frac32,\frac52,\frac72\}$; then we have $C\,\cup\,(b_0+C)=\{0,\ldots,15\}$. However, no such counterexamples exist once~$n\geq 5$. 
We will prove this statement, and in particular, Lemma~\ref{lemma2}, in Section~\ref{sec:lemmas}.

Taking Lemma~\ref{lemma2} to be true for the time being, we conclude
that for $A=A(0,b_0,\ldots,b_n)$ to be a face-magic hypercube of
dimension $n$ with $n$ odd and entries $0,\ldots,2^n-1$, we must have that
$b_0,\ldots,b_n$ is a permutation of $0,1,2,4,\ldots,2^{n-1}$ with
$b_j=0$ for some $j\in\{1,\ldots,n\}$---which by Theorem~\ref{thm:extra} is the hypothesis under
which $A$ is a face-magic hierarchical folded hypercube, splitting into
the two folded hypercubes of dimension $n-1$ determined by the $j$th
coordinate.

The resulting action of $S_n$ on $\{b_0,\ldots,\widehat{b_j},\ldots,b_n\}$ (and
thereby on the entries of~$A$, with $j$~fixed) corresponds to
the automorphisms of this hierarchical folded hypercube that fix the
distinguished direction~$j$. Together with the complementary~$C_2^n$ of
coordinate flips, this accounts for the group~$C_2^n\rtimes S_n$ of
symmetries for each fixed~$j$. Letting~$j$ range over~$\{1,\ldots,n\}$ 
then accounts for the remaining freedom regarding which coordinate~$j$ is
distinguished, recovering the full symmetry structure described in
Theorem~\ref{thm:main}. We have proven that there is exactly one magic-faced
hierarchical folded hypercube of dimension $n$, up to these symmetries,
as desired. Theorems~\ref{thm:main}--\ref{thm:folded2} now follow in the case of odd
$n$, assuming the truth of Lemma~\ref{lemma2} which we will establish in the next section.

\vspace{.1in}
\noindent
{\bf Associativity.} Finally, for odd values of $n$, we determine which elements of the $C_2^n\rtimes S_{n}$-orbit of magic-faced hierarchical folded hypercubes of dimension $n$ are associative. We have proven that there is a magic-faced hierarchical folded hypercube $A_n$ of dimension $n$ which is unique up to automorphisms of the underlying graph $HFQ_n$.  Our proof again implies that pairs of numbers in the magic-faced hierarchical folded hypercube $A_n$ that sum to $2^n+1$ are always adjacent to each other.  However, in this case, removing these adjacencies does not result in a hypercube but a disconnected graph, namely, two disjoint copies of $FQ_{n-1}$.  It follows that there is no associative magic-faced hypercube of dimension $n$.

Alternatively, if $n$ is odd, then using (\ref{aSformula}), we see that for any magic-faced hypercube $A=(a_S)$, we have 
$|a_S-a_{\overline{S}}|=|b_0|$ (and thus not $|a_S+a_{\overline{S}}|$) is a constant for all $S\subset\{1,\ldots,n\}$. Therefore, there does not exist any associative magic-faced hypercube of dimension $n$ when $n$ is odd.   This completes the proof of   Theorem~\ref{thm:associative} for odd $n$, again assuming the truth of Lemma~\ref{lemma2} which we will prove in the next section. 

\section{Proofs of Lemmas}
\label{sec:lemmas}

In this section, we prove Lemmas~\ref{lemma1} and \ref{lemma2}. 
This will thereby complete the proofs of Theorems~\ref{thm:main}--\ref{thm:folded2}.

\pagebreak
\subsection{An Auxiliary Lemma}

We begin with an auxiliary classical fact about subset sums that  Lemmas~\ref{lemma1} and \ref{lemma2} both rest on. It states that if the subset sums of an $n$-element set $T$ of real numbers consist of the consecutive integers $0,1,\ldots,2^{n}-1$ in some order, then $T$ must be the set $\{1,2,4,\dots,2^{n-1}\}$ of the first $n$ powers of~2. 

\begin{lemma}
\label{lem:auxiliary}
Let $n\ge1$ and let $b_1,\dots,b_n\in\mathbb{R}$. If the $2^n$ sums of all possible subsets of
$\{b_1,\ldots,b_n\}$ are equal to $0,1,\ldots,2^{n}-1$ in some order, then 
\[
\{b_1,\dots,b_n\}=\{1,2,4,\dots,2^{n-1}\}.
\]
\end{lemma}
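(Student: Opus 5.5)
We argue by induction on $n$, using the generating-function identity
\[
\prod_{i=1}^n (1+x^{b_i}) \;=\; \sum_{k=0}^{2^n-1} x^k \;=\; \prod_{i=0}^{n-1}\bigl(1+x^{2^i}\bigr),
\]
where the multiset of subset sums is encoded formally as a sum of terms $x^{s}$ with $s\in\mathbb R$. Rather than manipulating this identity directly (the exponents are only known to be real), we proceed elementarily through signs and ordering.

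\textbf{Step 1: every $b_i$ is positive.} The empty sum is $0$ and all $2^n$ subset sums are distinct. Suppose some $b_i<0$, and let $M$ be the sum of all positive $b_j$. Then $M$ is the largest subset sum, so $M=2^n-1$. The minimal sum is the sum of the negative $b_j$, which is $<0$, contradicting that all sums lie in $\{0,\ldots,2^n-1\}$. No $b_i$ equals $0$ either, since otherwise $\emptyset$ and $\{b_i\}$ would give the repeated sum $0$. Hence all $b_i>0$.

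\textbf{Step 2: induct on the largest element.} Order the elements as $0<b_1<\cdots<b_n$; they are distinct, since equal elements would give repeated sums. Split the subsets according to whether they contain $b_n$. Let $T$ be the set of subset sums of $\{b_1,\ldots,b_{n-1}\}$, a set of $2^{n-1}$ distinct reals. Then
\[
\{0,\ldots,2^n-1\} \;=\; T \;\sqcup\; (b_n+T).
\]
The main obstacle is showing that $T=\{0,\ldots,2^{n-1}-1\}$. We argue as follows. The set $T$ contains $0$. The largest element of the union is $2^n-1=b_n+\max T$, since $b_n>0$ makes $b_n+\max T$ exceed every element of $T$. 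Next, let $m$ be the smallest integer not in $T$. Then $m\le 2^{n-1}$ because $|T|=2^{n-1}$, and $m$ must lie in $b_n+T$. Since every element of $b_n+T$ is at least $b_n$, we get $m\ge b_n$. On the other hand $m-b_n\in T$ with $m-b_n\ge0$, so if $m>b_n$ then $m-b_n\in\{0,\ldots,m-1\}\subseteq T$ is consistent. We need the sharper conclusion $m=b_n$. For this, note that $b_n+0=b_n$ lies in the union and hence is an integer; it is not in $T$, by disjointness. Every integer $k<b_n$ in $\{0,\ldots,2^n-1\}$ cannot lie in $b_n+T$, because $T\ge 0$, so it lies in $T$. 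Therefore $\{0,1,\ldots,b_n-1\}\subseteq T$ and $b_n\notin T$, so $m=b_n$ and $b_n\le 2^{n-1}$.

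\textbf{Step 3: pin down $b_n$ by a counting trick.} Consider the largest elements. The integer $2^n-1-b_n=\max T$ lies in $T$. Apply the symmetry $s\mapsto \Sigma-s$ (complementation of subsets), where $\Sigma=b_1+\cdots+b_{n-1}=\max T$. This maps $T$ to itself, so $T$ is symmetric about $\Sigma/2$. Likewise the full set $\{0,\ldots,2^n-1\}$ is symmetric. Since $\{0,\ldots,b_n-1\}\subseteq T$ and $b_n\notin T$, symmetry gives $\{\Sigma-b_n+1,\ldots,\Sigma\}\subseteq T$ and $\Sigma-b_n\notin T$. Counting now settles the matter. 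The integers of $\{0,\ldots,2^n-1\}$ lie in $T$ or in $b_n+T$. If $b_n<2^{n-1}$, the integers $b_n,\ldots,2b_n-1$ lie in $b_n+\{0,\ldots,b_n-1\}$, and one continues to see that $T$ is a union of translated blocks. A cleaner route is the following: show inductively that $T\subseteq\mathbb Z$, so that $T$ is a set of $2^{n-1}$ integers whose translate by $b_n$ is disjoint from it, with union an interval. From this, $T=\{0,\ldots,2^{n-1}-1\}$ and $b_n=2^{n-1}$; for instance, compare the sums of all elements to get $|T|\,b_n+2\sum T=\binom{2^n}{2}$, and use $\sum T=2^{n-2}\Sigma$ (each $b_i$ lies in half the subsets) together with $\Sigma=2^n-1-b_n$. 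This yields $2^{n-1}b_n+2^{n-1}(2^n-1-b_n)=2^{n-1}(2^n-1)$, which is an identity, so by itself it is not enough. The remaining block argument is the one to push: the smallest missing integer $b_n$ forces $T\cap[0,2b_n)=[0,b_n)$, and iterating shows that $T$ is a union of intervals of length $b_n$ spaced $2b_n$ apart. This makes $T$ the subset sums of a set, which is consistent only if $b_n=2^{n-1}$ and $T=[0,2^{n-1})$. Once this is established, the induction hypothesis applied to $\{b_1,\ldots,b_{n-1}\}$ gives $\{1,2,\ldots,2^{n-2}\}$, which completes the proof.
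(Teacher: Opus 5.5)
Your Steps 1 and 2 are sound: you correctly get $b_n\in\mathbb Z$, $\{0,\dots,b_n-1\}\subseteq T$, $b_n\notin T$, and $b_n\le 2^{n-1}$. The gap is in Step 3, at the point you yourself flag as the main obstacle.

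Your intermediate claim is false: a set $T$ of $2^{n-1}$ integers with $T\sqcup(b_n+T)=\{0,\dots,2^n-1\}$ need not equal $\{0,\dots,2^{n-1}-1\}$. Your closing assertion, that the block structure of $T$ is ``consistent only if $b_n=2^{n-1}$'', is also false as stated. For $n=2$, take $T=\{0,2\}$, the subset-sum set of $\{2\}$. Then $T\sqcup(1+T)=\{0,1,2,3\}$ with translate $1\neq 2^{n-1}$. This is just the true solution $\{1,2\}$ split along its \emph{smallest} element. So a union of blocks of length $b$ spaced $2b$ apart is perfectly compatible with being a subset-sum set. What rules out $b_n<2^{n-1}$ is that $b_n$ is the \emph{largest} element, and your argument never uses this. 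The conclusion $T=\{0,\dots,2^{n-1}-1\}$, on which your induction rests, is therefore asserted rather than proved. The symmetry and summation detours do not supply it.

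The repair is short and does not need the block structure. Note first that $T\subseteq\{0,\dots,2^n-1\}$, so $T$ consists of integers.
\begin{itemize}
\item Suppose $T\not\subseteq\{0,\dots,b_n-1\}$, and let $t=\sigma(S)$ be the least element of $T$ with $t\ge b_n$.
\item $S$ is nonempty since $t>0$; pick any $b_i\in S$, so $i<n$ and $0<b_i<b_n$.
\item Then $t-b_i\in T$ and $t-b_i<t$, so $t-b_i<b_n$ by minimality, whence $t<2b_n$.
\item Hence $t-b_n\in\{0,\dots,b_n-1\}\subseteq T$, so $t\in T\cap(b_n+T)$, contradicting disjointness.
\end{itemize}
Therefore $T\subseteq\{0,\dots,b_n-1\}$, giving $2^{n-1}=|T|\le b_n\le 2^{n-1}$. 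So $b_n=2^{n-1}$ and $T=\{0,\dots,2^{n-1}-1\}$, and your induction closes.

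For comparison, the paper runs the induction from the bottom rather than the top. With $b_1\le\cdots\le b_n$, once $b_1,\dots,b_i$ equal $1,2,\dots,2^{i-1}$, their subset sums are exactly $\{0,\dots,2^i-1\}$. The smallest sum of a subset not contained in $\{b_1,\dots,b_i\}$ is $b_{i+1}$, which must therefore equal $2^i$. This greedy argument avoids the tiling $T\sqcup(b_n+T)$ altogether.
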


\begin{proof}
Let $\{b_1,\dots,b_n\}$ be a set of real numbers such that the $2^n$ subset sums are equal to $0,1,\ldots,2^{n}-1$ in some order. Without loss of generality, we assume that $b_1\leq b_2\leq \cdots \leq b_n$.  
We prove that we must have $b_i=2^{i-1}$ by induction on $i$.

For a subset $T\subseteq\{b_1,\dots,b_n\}$, let $\sigma(T)$ denote the sum of the elements of $T$. Then $\sigma(\emptyset)=0$. Since $b_1$ is itself one of the $2^n$ subset sums, it lies in $\{0,\ldots,2^n-1\}$, so $b_1\ge0$; and since $b_1$ is the smallest of the $b_i$'s, all $b_i\ge0$. The smallest value of $\sigma(T)$ for $T\neq \emptyset$ is thus $\sigma(\{b_1\})=b_1$. Thus we must have $b_1=1$, proving the desired statement for $i=1$. 

Now assume that the values of $b_1,b_2,\ldots,b_i$ are equal to $1,2,4,\ldots,2^{i-1}$, respectively.  Then the subset sums of $\{b_1,\ldots,b_i\}=\{1,2,4,\ldots,2^{i-1}\}$ are equal to $0,\ldots,2^i-1$, by the uniqueness of $i$-digit binary expansions of the nonnegative integers up to $2^{i}-1$.  The smallest value of $\sigma(T)$ for $T\not\subseteq \{b_1,\ldots,b_i\}$ is $\sigma(\{b_{i+1}\})=b_{i+1}$.  It follows that $b_{i+1}$ must be the smallest number not yet attained, namely, $2^i$, proving the desired assertion for $i+1$.  This result now follows by induction. 
\end{proof}

This proof will be our model for the next two propositions, which naturally generalize Lemmas~\ref{lemma1} and \ref{lemma2}.

\subsection{The First Proposition}

Proposition~\ref{lem:even} aims to generalize Lemma~\ref{lem:auxiliary} from all subsets to all {\it even-sized} subsets: if the even-sized subset sums of an $(n+1)$-element set $T$ of real numbers consist of the consecutive integers $0,1,\ldots,2^{n}-1$ in some order, then what does $T$ look like?

\begin{proposition}
\label{lem:even}
Let $n\geq 0$ be an integer. Suppose $B=\{b_0,\ldots,b_n\}$ is a set of $n+1$ real numbers such that the $2^n$ sums of even-sized subsets of $B$ are equal to $0,\ldots,2^{n}-1$ in some~order.   Then: 
\begin{itemize}
\item[{\em (a)}] If $n=0$, this holds for every $b_0\in\mathbb{R}$. 
\item[{\em (b)}] If $n=1$, this holds if and only if $\{b_0,b_1\}=\{a,1-a\}$ for some real number $a$. 
\item[{\em (c)}] If $n=3$, this holds if and only if 
\begin{equation}
\:\:\;\;\;\;\;\;\;\{b_0,b_1,b_2,b_3\} = \{0\} \cup \{1,2,4\} \text{ or } \bigl\{-\tfrac12,\tfrac32,\tfrac52,\tfrac72\bigr\}.
\end{equation}
\item[{\em (d)}] If $n=2$ or $n\ge4$, this holds if and only if
\begin{equation}
 \{b_0,\ldots,b _n\}=\{0\} \cup \{1,2,4,\ldots,2^{n-1}\}.
\end{equation}
\end{itemize}
\end{proposition}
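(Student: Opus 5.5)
Cases (a) and (b) are immediate. For $n=0$ the only even-sized subset is $\emptyset$, with sum $0$, so every $b_0$ works. For $n=1$ the even-sized subsets are $\emptyset$ and $\{b_0,b_1\}$, so the condition is exactly $b_0+b_1=1$. For the main cases I would follow the inductive template of Lemma~\ref{lem:auxiliary}, driven by a reduction that turns even-sized subset sums into ordinary subset sums.

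\textbf{The reduction.} Order $b_0\le b_1\le\cdots\le b_n$. The even-sized subsets of $B$ are in bijection with all subsets $T\subseteq\{b_1,\dots,b_n\}$: attach $b_0$ exactly when $|T|$ is odd. So the even-sized subset sums are $\sigma(T)+b_0\cdot[|T|\text{ odd}]$. Setting $c_i=b_i+b_0$ for $i\ge1$, an odd-sized $T$ has $\sigma(T)+b_0=\sum_{i\in T}c_i-(|T|-1)b_0$. Hence if $b_0=0$ the hypothesis says precisely that the subset sums of $\{b_1,\dots,b_n\}$ are $0,\dots,2^n-1$, and Lemma~\ref{lem:auxiliary} gives $\{1,2,\dots,2^{n-1}\}$. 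The whole task is therefore to prove $b_0=0$, apart from the sporadic $n=3$ solution.

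\textbf{Forcing the small elements.} The sum $0$ comes from $\emptyset$, so all other even sums are $\ge1$, and the smallest nonempty even sum is $b_0+b_1=1$. Write $b_0=-t$ and $b_1=1+t$. Either $t\ge0$ or all $b_i\ge 1/2$; in the latter case every pair sum is $\ge1$ and every four-element sum is at least $b_0+b_1+b_2+b_3$.

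I would then run the induction of Lemma~\ref{lem:auxiliary}. Suppose $b_1,\dots,b_i$ are already known as functions of $t$. The next unattained integer must equal the smallest new even sum, which is $b_0+b_{i+1}$ or possibly $b_1+b_{i+1}$ etc. This gives $b_{i+1}$ in terms of $t$. One also needs a mod-$1$ bookkeeping step: all even sums are integers, so every $b_i+b_j\in\mathbb Z$, which forces $2t\in\mathbb Z$ and all $b_i\equiv t\pmod 1$. So $t\in\frac12\mathbb Z$, which splits the argument into two cases.

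\textbf{Case $t\in\mathbb Z$.} Here the $b_i$ are integers. I would show $t=0$ by counting: the sum of all even-sized subset sums is $2^{n-1}\sum b_i$ for $n\ge1$, and this must equal $2^{n-1}(2^n-1)$, so $\sum_{i=0}^n b_i=2^n-1$. Combined with the greedy determination $b_{i+1}=2^i+t$ (or a variant), this forces $t=0$. In parallel, using the maximum even sum $2^n-1$ pins the largest elements.

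\textbf{Case $t\in\frac12+\mathbb Z$.} All $b_i$ are half-odd-integers. Greedy construction with $b_0=-\frac12$ yields $\{-\frac12,\frac32,\frac52,\frac72\}$ when $n=3$. I would show that for $n\ge4$ the greedy sequence produces a collision: with $b_4$ forced to be about $\frac{11}{2}$ or similar, two four-element or pair sums coincide, or a gap appears. The same collision analysis for $n=2$ shows that only $\{0,1,2\}$ works, since $\{-\frac12,\frac32,\frac52\}$ gives sums $1,2,4$, which miss $3$.

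\textbf{Main obstacle.} The hardest step is making the greedy step rigorous when $t>0$ is not tiny. A negative $b_0$ means that large-cardinality subsets containing $b_0$ can undercut smaller ones, so "smallest new sum" is not simply $b_0+b_{i+1}$. I would first try to bound $t$ using the sum identity $\sum b_i=2^n-1$ together with the maximum $\sum_{i\ge1}b_i$ (or that sum minus $b_1$, depending on parity) being $2^n-1$. This should give $t\le\frac12$, leaving only $t\in\{0,\frac12\}$ to check by hand via the greedy process.
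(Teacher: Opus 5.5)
You handle (a), (b), the reduction when $b_0=0$, and the preliminary facts ($b_0+b_1=1$, $2t\in\mathbb Z$, $\sum_i b_i=2^n-1$) correctly. The gap is the step that actually excludes $t\neq 0$: it is not carried out, and both routes you sketch for it fail as stated.

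\textbf{The integer case.} The greedy does not produce $b_{i+1}=2^i+t$. It gives $b_1=1+t$ and $b_2=2+t$. Then, since $b_1+b_2=3+2t>3$ whenever $t>0$, the value $3$ must equal $b_0+b_3$, so $b_3=3+t$, not $4+t$.

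\textbf{The fallback bound $t\le\frac12$.} Deriving it from the sum identity and the maximum even sum cannot work for odd $n$. There $B$ itself is even-sized, so the maximum even sum is $\sum_{i=0}^n b_i$, not $\sum_{i\ge2}b_i$. Requiring it to equal $2^n-1$ just restates the counting identity. More generally, $T\mapsto B\setminus T$ preserves even size, so the top of the sum-set mirrors the bottom and carries no new information. (For even $n$ the idea does work, and gives more than you claim: the largest even sum is $\sum_i b_i-b_0$, which forces $b_0=0$ at once.)

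\textbf{The half-integer case.} You only examine $t=\frac12$, and for $n\ge4$ you assert a collision without identifying it.

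\textbf{The missing idea.} The forced relation $b_3=3+t=b_0+b_1+b_2$, valid for every $t>0$, is itself the contradiction:
\begin{itemize}
\item For $n\ge4$ it gives $b_3+b_4=b_0+b_1+b_2+b_4$, two different even-sized subsets with equal sums.
\item For $n=3$ the remaining sums $3+2t,4+2t,5+2t,6+2t$ must be $4,5,6,7$, which forces $t=\tfrac12$.
\item For $n=2$ one needs $3+2t=3$, so $t=0$.
\item $t<0$ is impossible, because $b_1+b_2=3+2t$ is an integer exceeding $b_0+b_2=2$, hence at least $3$.
\end{itemize}

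\textbf{Your stated obstacle is not real.} From $b_0\le b_1$ and $b_0+b_1\ge1$, every $b_k$ with $k\ge1$ is at least $\tfrac12$. So any even-sized subset meeting $\{i+1,\dots,n\}$ has sum at least $b_0+b_{i+1}$. This also justifies $b_0+b_1=1$. It makes the greedy step rigorous for every $t$: the smallest integer not yet attained equals $b_0+b_{i+1}$. No mod-$1$ analysis and no a priori bound on $t$ are needed.

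These three greedy steps are exactly the paper's Case~2. The paper phrases it as follows: a subset of size $\ge4$ splits into two disjoint nonempty even-sized pieces with distinct positive sums, so its sum is at least $3$, and hence $1,2,3$ must all be pair sums.
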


\begin{proof}
If $n=0$, the only even-sized subset of $\{b_0\}$ is $\emptyset$, whose sum is $0$ regardless of $b_0$; hence the single required sum of $0$ is attained for every $b_0\in\mathbb{R}$.

If $n=1$, the even-sized subsets of $\{b_0,b_1\}$ are $\emptyset$ and $\{b_0,b_1\}$, with sums $0$ and $b_0+b_1$. We need $\{0,b_0+b_1\}=\{0,1\}$, implying that $b_0+b_1=1$, i.e.\ $\{b_0,b_1\}=\{a,1-a\}$ for any $a\in \mathbb R$.

Suppose now that $n\ge2$. Without loss of generality, we assume that $b_0\le b_1\le\cdots\le b_n$. For $T\subseteq\{0,\ldots,n\}$ write $\sigma(T)=\sum_{i\in T}b_i$. Then $\sigma(\emptyset)=0$.

\vspace{.1in}
\noindent\textbf{Case 1: $b_0=0$.} Every even-sized $T$ either omits $b_0$, in which case $\sigma(T)$ is an even-sized subset sum of $\{b_1,\ldots,b_n\}$, or contains $b_0$, in which case $T=\{b_0\}\cup T'$ with $T'\subseteq\{b_1,\ldots,b_n\}$ odd-sized, and $\sigma(T)=b_0+\sigma(T')=\sigma(T')$. So the $2^n$ even-sized subset sums of $\{b_0,\ldots,b_n\}$ are exactly the $2^n$ subset sums---of every size---of $\{b_1,\ldots,b_n\}$. By Lemma~\ref{lem:auxiliary}, applied verbatim to $b_1\le\cdots\le b_n$, this holds if and only if $\{b_1,\ldots,b_n\}=\{1,2,4,\ldots,2^{n-1}\}$; together with $b_0=0$ this gives $\{b_0,\ldots,b_n\}=\{0\}\cup\{1,2,4,\ldots,2^{n-1}\}$.

\vspace{.1in}
\noindent\textbf{Case 2: $b_0\ne0$.} Since only $\emptyset$ attains $0$, every nonempty even-sized subset $T$ has sum $\sigma(T)\ge1$.  Furthermore, if $|T|\ge4$, then splitting $T$ into two disjoint nonempty even-sized pieces shows that $\sigma(T)$ is a sum of two \emph{distinct} positive integers, implying that $\sigma(T)\ge3$. 

Hence the smallest nonzero value, $1$, and the next, $2$, are each attained as $\sigma(T)$ where $T$ is a pair.  The smallest pair sums are obtained by $T=\{b_0,b_1\}$ and $T=\{b_0,b_2\}$.  Thus $\sigma(\{b_0,b_1\}=b_0+b_1=1$ and $b_0+b_2=2$. 

Now a subset $T$ of size $\ge4$ attaining $\sigma(T)=3$ would have to split into disjoint pairs $T_1,T_2$ with $\sigma(T_1)=1$ and $\sigma(T_2)=2$, and thus $T_1=\{b_0,b_1\}$ and $T_2=\{b_0,b_2\}$; this is not possible as these $T_1,T_2$ overlap.  Thus $3$ too must be attained by a pair; hence either $\sigma(\{b_1,b_2\})=b_1+b_2=3$ or, if $n\ge3$, $\sigma(\{b_0,b_3\})=b_0+b_3=3$, as $b_1+b_2$ and $b_0+b_3$ are the two smallest pair sums not yet used. 

However, adding $b_0+b_1=1$ and $b_0+b_2=2$ gives $2b_0+(b_1+b_2)=3$, so $b_1+b_2=3$ would force $b_0=0$, contrary to assumption. Hence $n\ge3$ and $b_0+b_3=3$. The equations $2b_0+(b_1+b_2)=3$ and $b_0+b_3=3$ then imply
\begin{equation}
b_3=b_0+b_1+b_2.
\end{equation}
If $n\ge4$, then $\sigma(\{b_3,b_4\})=b_3+b_4=b_0+b_1+b_2+b_4=\sigma(\{b_0,b_1,b_2,b_4\})$, yielding two distinct even-sized subsets with the same sum---contradicting the distinctness of all $2^n$ even-sized subset sums. Therefore, $n=3$ remains the only possibility.

If $n=3$, the only remaining even-sized subset that needs to be determined is $\{b_1,b_2\}$, whose sum must equal the only remaining value: $b_1+b_2=4$. Together with $2b_0+(b_1+b_2)=3$, this gives $b_0=-\tfrac12$, and hence $b_1=\tfrac32$, $b_2=\tfrac52$, and $b_3=3-b_0=\tfrac72$. We check directly that the even-sized subset sums of $\{-\tfrac12,\tfrac32,\tfrac52,\tfrac72\}$  indeed give $0,\ldots,7$.
\end{proof}

Proposition~\ref{lem:even} shows that if the even-sized subset sums of a set $U$ of size $n+1\geq 2$ are equal to $0,\ldots,2^{n}-1$ in some order, then either $U=\{0,1,2,4,\ldots,2^{n-1}\}$ (the standard solution) or $U$ is some exceptional set that exists only when $n$ is odd (indeed, either $n=1$ or $n=3$). Lemma~\ref{lemma1} follows, and hence so do Theorems~\ref{thm:main}--\ref{thm:folded2} in the even cases. 

\subsection{The Second Proposition}

Proposition~\ref{lem:odd} aims to generalize Lemma~\ref{lem:auxiliary} from all subsets to all {\it even-sized} subsets and their translates by a fixed integer: if the even-sized subset sums of an $n$-element set $T$ of real numbers, together with their translates by a fixed integer $b_0$ consist of the consecutive integers $0,1,\ldots,2^{n}-1$ in some order, then what do $T$ and $b_0$ look like?

\begin{proposition}
\label{lem:odd}
Let $n\ge1$ be an integer, and let $b_0,\ldots,b_n$ be real numbers
such that $C\,\cup\,(b_0+C)=\{0,\ldots,2^n-1\}$, where $C$ is the set of
sums of even-sized subsets of $\{b_1,\ldots,b_n\}$. Then: 
\begin{itemize}
\item[{\em (a)}] If $n=1$, this holds if and only if $b_0=1$ while $b_1\in \mathbb R$ is arbitrary. 
\item[{\em (b)}] If $n=2$, this holds if and only if $b_0\in\{1,2\}$ and $\{b_1,b_2\}=\{a,3-b_0-a\}$ for some real number $a$. 
\item[{\em (c)}] If $n=4$, this holds if and only if 
$b_0\in\{1,2,4,8\}$ and  
\begin{eqnarray}
\{b_1,b_2,b_3,b_4\}&=&\{0\}\cup (\{1,2,4,8\}\setminus\{b_0\}) 
\:\!\text{ or }\:\! \\[7pt]
\{b_1,b_2,b_3,b_4\}&=&\left\{\tfrac{15-b_0}{2},\tfrac{15-b_0}{2}\!-\!b_1,\tfrac{15-b_0}{2}\!-\!b_2,\tfrac{15-b_0}{2}\!-\!b_3\right\}.
\end{eqnarray}
\item[{\em (d)}] If $n=3$ or $n\geq 5$, this holds if and only if 
\begin{equation}
\{b_0,\ldots,b_n\}=\{0\}\cup\{1,2,4,\ldots,2^{n-1}\}, \text{ with } b_0\ne0.
\end{equation}
\end{itemize}
\end{proposition}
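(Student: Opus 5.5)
We follow the proof of Proposition~\ref{lem:even}, with its Case~1/Case~2 split. Throughout, $C$ has $2^{n-1}$ elements, and the hypothesis $C\cup(b_0+C)=\{0,\ldots,2^n-1\}$ forces $C$ and $b_0+C$ to be disjoint. Hence the $2^{n-1}$ even-sized subset sums of $\{b_1,\ldots,b_n\}$ are distinct integers, and $b_0\neq 0$ is an integer.

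The small cases are direct. For $n=1$ we have $C=\{0\}$, so we need $b_0=1$, and $b_1$ is free. For $n=2$ we have $C=\{0,b_1+b_2\}$, and $\{0,s,b_0,b_0+s\}=\{0,1,2,3\}$ forces $(b_0,s)\in\{(1,2),(2,1)\}$, i.e.\ $s=3-b_0$.

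For the general case, order $b_1\le\cdots\le b_n$ and let $c_1<c_2<\cdots$ be the nonzero elements of $C$, so that $0\in C$. Since $0\in C$, we get $b_0\in b_0+C$, and $b_0$ is the least element of $b_0+C$ (assuming $b_0>0$; the case $b_0<0$ is symmetric after swapping the roles of $C$ and $b_0+C$, i.e.\ replacing $C$ by $b_0+C$, which is not of the same form, so more care is needed).

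A cleaner route is the following. Since the union is a disjoint union of a set and its translate, $C$ tiles $\{0,\ldots,2^n-1\}$ with $\{0,b_0\}$. This means $\{0,\ldots,2^n-1\}$ is a direct sum $C\oplus\{0,b_0\}$, and all elements of $C$ are $\ge 0$ with $0\in C$, so $b_0>0$.

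Now reduce to Proposition~\ref{lem:even}, applied in dimension $n-1$. We want $C$, the even-subset sums of the $n$-element set $\{b_1,\ldots,b_n\}$, to be an interval-like set. Tilings of an interval $[0,N)$ by $\{0,b\}$ are classical: $b$ must divide structure. Concretely, $C\oplus\{0,b_0\}=[0,2^n)$ forces $C$ to be a union of blocks $[kb_0\cdot 2,\;kb_0\cdot 2+b_0)$, that is, $C=\{x\in[0,2^n): \lfloor x/b_0\rfloor \text{ even}\}$, with $2b_0\mid 2^n$. So $b_0=2^t$ for some $0\le t\le n-1$. This is the structural step; it follows by a greedy argument, since the smallest integer not yet covered must lie in $C$.

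Next, exploit this structure. Following the greedy spirit of Lemma~\ref{lem:auxiliary}, we have $C\supseteq\{0,1,\ldots,b_0-1\}$, then the gap $[b_0,2b_0)$ lies outside $C$, and so on. The smallest pair sums $b_1+b_2\le b_1+b_3\le\cdots$ must produce $1,2,\ldots$ as in Case~2 of Proposition~\ref{lem:even}. We would then split according to whether the multiset $\{b_1,\ldots,b_n\}$ contains $0$:
\begin{itemize}
\item[(i)] If some $b_j=0$, then $C$ equals the set of all subset sums of the remaining $n-1$ values. Combined with the translate by $b_0$, we get all subset sums of those $n-1$ values together with $b_0$. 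Lemma~\ref{lem:auxiliary} then gives $\{b_0\}\cup(\{b_1,\ldots,b_n\}\setminus\{0\})=\{1,2,\ldots,2^{n-1}\}$, which is the standard solution.
\item[(ii)] If no $b_j=0$, we mimic Case~2 of Proposition~\ref{lem:even}, tracking the constraint that each new small value lies either in $C$ or in $b_0+C$ according to the block structure. We derive a linear relation such as $b_3=b_1+b_2+\text{const}$, which produces a collision $\sigma(\{b_3,b_4\})=\sigma(\{b_1,b_2,b_3', b_4\})$-type coincidence once $n\ge5$. For $n=4$ there is no such collision, and solving the remaining linear system gives the family $\{\tfrac{15-b_0}{2}-\cdots\}$ of part~(c). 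For $n=3$ a parity or size count should rule it out; there $C$ has 4 elements, and a short case check suffices.
\end{itemize}

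The main obstacle is step~(ii): unlike in Proposition~\ref{lem:even}, the small integers $1,2,3,\ldots$ may be realized either in $C$ or in $b_0+C$, depending on $b_0=2^t$. The case analysis must therefore branch on $t$. We would first handle $t\ge1$, where $1\in C$ forces $b_1+b_2=1$, and then $t=0$, where $1=b_0$ and $2\in C$. In each branch the goal is to reach the coincidence among even-sized subsets, contradicting distinctness when $n\ge5$.
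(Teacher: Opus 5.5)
Your framework is the paper's. The disjointness and cardinality observation, the tiling argument forcing $b_0=2^t$ with $b_0\mid 2^{n-1}$, and your case (i) via Lemma~\ref{lem:auxiliary} match its opening steps and its Case~1. The genuine gap is case (ii), which carries all of part (c) and the uniqueness half of part (d). There you give only a plan, and as written it does not close. Your claim that the smallest pair sums ``must produce $1,2,\ldots$'' is false when $b_0\in\{1,2\}$. Your proposed branching ($t\ge1$ versus $t=0$) is also not the right split, since for $t=1$ the element of $C$ after $1$ is $4$, not $2$.

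The missing idea is to let $c_1<c_2$ be the two smallest positive elements of $C$. From the block structure, $(c_1,c_2)=(2,4)$, $(1,4)$, $(1,2)$ for $b_0=1$, $b_0=2$, $b_0\ge4$ respectively. So in every case $c_1+c_2$ is the \emph{third} smallest positive element of $C$. With that, the Case~2 argument of Proposition~\ref{lem:even} runs verbatim with $(b_1,c_1,c_2)$ in place of $(b_0,1,2)$, with no branching on $t$:
\begin{itemize}
\item Every $|T|\ge4$ splits into two disjoint nonempty even pieces with distinct positive sums in $C$, so $\sigma(T)\ge c_1+c_2$.
\item Hence $b_1+b_2=c_1$, $b_1+b_3=c_2$, and $c_1+c_2$ must be a pair sum.
\item The pair $\{2,3\}$ would force $b_1=0$, so $n\ge4$ and $b_1+b_4=c_1+c_2$.
\end{itemize}

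This yields exactly $b_4=b_1+b_2+b_3$, with no constant, and that exactness is what the argument needs. A relation like your ``$b_3=b_1+b_2+\text{const}$'' with nonzero constant produces no coincidence among even-subset sums. It is $b_4=b_1+b_2+b_3$ that gives $\sigma(\{4,5\})=\sigma(\{1,2,3,5\})$ for $n\ge5$. The same step disposes of $n=3$, since no $b_4$ exists, replacing your unspecified ``parity or size count''.

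For $n=4$, the system $b_2=c_1-b_1$, $b_3=c_2-b_1$, $b_4=c_1+c_2-b_1$ leaves $b_1$ free, so ``solving the remaining linear system'' needs one more equation. The paper uses $\sum_{i=1}^n b_i=2^n-1-b_0$, which holds because each index lies in $2^{n-2}$ of the even-sized subsets. This gives $b_1=c_1+c_2-\tfrac{15-b_0}{2}$ and hence the four exceptional solutions. You also omit the ``if'' direction for those $n=4$ solutions, which has to be checked directly.
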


\begin{proof}
If $n=1$, the only even-sized subset of $\{b_1\}$ is $\emptyset$, so
$C=\{0\}$ regardless of $b_1$. We require $\{0,b_0\}=\{0,1\}$, implying
that $b_0=1$, while $b_1\in\mathbb R$ is unconstrained.

If $n=2$, the even-sized subsets of $\{b_1,b_2\}$ are $\emptyset$ and
$\{b_1,b_2\}$, so $C=\{0,b_1+b_2\}$. Write $s=b_1+b_2$. We require
$\{0,s,b_0,b_0+s\}=\{0,1,2,3\}$, i.e., $\{s,b_0,b_0+s\}=\{1,2,3\}$.
Since $b_0\ge1$ (shown below), the choice $s=3$ forces $b_0+s\ge4$,
a contradiction.  The choice $s=1$ gives $\{b_0,b_0+1\}=\{2,3\}$, i.e.,  $b_0=2$.  Finally, the choice $s=2$ implies $\{b_0,b_0+2\}=\{1,3\}$, i.e., $b_0=1$. Hence $b_0\in\{1,2\}$
and $b_1+b_2=3-b_0$.  We conclude that $\{b_1,b_2\}=\{a,3-b_0-a\}$ where $a=b_1\in\mathbb R$ is arbitrary.

Suppose now that $n\ge3$. Without loss of generality, we assume that
$b_1\le\cdots\le b_n$. For $T\subseteq\{1,\ldots,n\}$, write
$\sigma(T)=\sum_{i\in T}b_i$, so that $C=\{\sigma(T):|T|\text{ even}\}$.
Then $\sigma(\emptyset)=0$.

\vspace{.1in}
\noindent\textbf{Case 1: $b_1=0$.} Every even-sized $T\subseteq\{1,\ldots,n\}$
either omits index $1$, in which case $\sigma(T)$ is an even-sized
subset sum of $\{b_2,\ldots,b_n\}$, or contains it, in which case
$T=\{1\}\cup T'$ with $T'\subseteq\{2,\ldots,n\}$ odd-sized, and
$\sigma(T)=b_1+\sigma(T')=\sigma(T')$. So $C$ is exactly the set of
\emph{all} subset sums of $\{b_2,\ldots,b_n\}$, and hence $C\cup(b_0+C)$
is exactly the set of all subset sums of the $n$ numbers
$b_0,b_2,\ldots,b_n$. By Lemma~\ref{lem:auxiliary}, $C\cup(b_0+C)$ is equal to $\{0,\ldots,2^n-1\}$ if and only if
$\{b_0,b_2,\ldots,b_n\}=\{1,2,\ldots,2^{n-1}\}$.  Together with $b_1=0$, 
this gives $\{b_0,\ldots,b_n\}=\{0\}\cup\{1,2,\ldots,2^{n-1}\}$ with
$b_0\ne0$.  This is the generic solution, valid for every $n\ge3$.

\vspace{.1in}
\noindent\textbf{Case 2: $b_1\ne0$.} Since $T=\emptyset$ attains $\sigma(T)=0$, 
every nonempty $T$ satisfies $\sigma(T)\ge1$. In particular, 
$b_0=b_0+\sigma(\emptyset)$ is also an attained
value, and so is a positive integer. Moreover, $b_0$ is a power of $2$:
partitioning $\{0,\ldots,2^n-1\}$ into consecutive blocks of length
$b_0$, no element of $C$ is negative, so $[0,b_0)\subseteq C$; hence
$[b_0,2b_0)\subseteq b_0+C$; hence $[2b_0,3b_0)\subseteq C$; and so~on. 
For this to split $\{0,\ldots,2^n-1\}$ into two equal
halves, $2^n/b_0$ must be an even integer, i.e.,~$b_0\mid2^{n-1}$.

Each index $i\in\{1,\ldots,n\}$ lies in
exactly $2^{n-2}$ of the $2^{n-1}$ even-sized subsets of
$\{1,\ldots,n\}$, so $\sum_{c\in C}c=2^{n-2}\sum_{i=1}^n b_i$. Since $C$
and $b_0+C$ partition $\{0,\ldots,2^n-1\}$, adding $\sum_{c\in C}c$ and
$\sum_{c\in C}(b_0+c)$ gives $\sum_{k=0}^{2^n-1}k=2^{n-1}(2^n-1)$.
We conclude that 
\begin{equation}
  \sum_{i=1}^n b_i=2^n-1-b_0.
\end{equation}

Let the two smallest positive elements of $C$ be denoted by $c_1$ and 
$c_2$, with $c_1<c_2$. If
$b_0=1$, then $C=\{0,2,\ldots,2^n-2\}$, so $c_1=2,c_2=4$. If $b_0=2$, then
$[0,2)\subseteq C$ but $[2,4)\subseteq b_0+C$, and $[4,6)\subseteq C$,
so $c_1=1,c_2=4$. If $b_0\ge4$, then $[0,b_0)\subseteq C$ already contains
$1,2$, so $c_1=1,c_2=2$. In each case, $c_1+c_2$ is the
\emph{third} smallest positive element of $C$.

This is exactly the configuration of Case 2 in the proof of
Lemma~\ref{lem:even}, with $(b_1,c_1,c_2)$ in place of $(b_0,1,2)$.
Splitting any $|T|\ge4$ into two disjoint nonempty pieces shows
that $\sigma(T)$ is a sum of two distinct elements of $C\setminus\{0\}$,
so $\sigma(T)\ge c_1+c_2$. Thus $c_1,c_2$ are each attained by a pair,
and taking the smallest pair sums gives
\begin{equation}
  b_1+b_2=c_1,\qquad b_1+b_3=c_2.
\end{equation}
A subset of size $\ge4$ attaining $c_1+c_2$ would split into disjoint
pairs attaining $c_1$ and $c_2$, forcing them to be $\{b_1,b_2\}$ and
$\{b_1,b_3\}$---impossible, as these overlap. So $c_1+c_2$ too must be
attained by a pair; hence either $\sigma(\{b_2,b_3\})=b_2+b_3=c_1+c_2$
or, if $n\ge4$, $\sigma(\{b_1,b_4\})=b_1+b_4=c_1+c_2$, as $b_2+b_3$ and
$b_1+b_4$ are the two smallest pair sums not yet used.

Adding $b_1+b_2=c_1$ and $b_1+b_3=c_2$ gives $2b_1+(b_2+b_3)=c_1+c_2$,
so $b_2+b_3=c_1+c_2$ would force $b_1=0$, contrary to assumption. Hence
$n\ge4$ (if $n=3$, no such $b_4$ exists, and both branches fail, a
contradiction) and $b_1+b_4=c_1+c_2$, giving
\begin{equation}\label{bc}
  b_2=c_1-b_1,\qquad b_3=c_2-b_1,\qquad b_4=c_1+c_2-b_1.
\end{equation}
If $n\ge5$, then $\sigma(\{b_4,b_5\})=b_4+b_5=b_1+b_2+b_3+b_5=
\sigma(\{b_1,b_2,b_3,b_5\})$ (using $b_4=b_1+b_2+b_3$), contradicting the distinctness of
all $2^n$ sums. Therefore $n=4$ remains the only possibility.

For $n=4$, substituting the three expressions in (\ref{bc}) into the sum
identity $b_1+b_2+b_3+b_4=15-b_0$ yields
\begin{equation}
  2b_1-2(c_1+c_2)=-(15-b_0).
\end{equation}
Writing $s:=\tfrac{15-b_0}{2}$ then gives
\begin{equation}
  b_1=c_1+c_2-s,\qquad b_2=s-c_2,\qquad b_3=s-c_1,\qquad b_4=s.
\end{equation}
By Case 1, the set $\{0\}\cup(\{1,2,4,8\}\setminus\{b_0\})$ is a
solution for this choice of $b_0$. Writing its nonzero elements as $c_1,c_2,c_3$ (so $c_1+c_2+c_3=(1+2+4+8)-b_0=2s$), the four values above
are exactly $s-c_3,\,s-c_2,\,s-c_1,\,s-0$, corresponding to the four stated exceptional solutions as $b_0$ ranges over $\{1,2,4,8\}$. They are worked out explicitly in Table~\ref{tab:anomaly}; we check directly that the even-sized subset sums of $\{b_1,b_2,b_3,b_4\}$, together with their translates by $b_0$,   indeed give $0,\ldots,15$.
\end{proof}

\begin{table}[t]
\centering
\begin{tabular}{c|c}
$b_0$ & $\{b_1,b_2,b_3,b_4\}$ \\[2pt]
\hline
$1$ & $\{-1,3,5,7\}^{\phantom{|^{|^?}}}\!\!\!$ \\[4pt]
$2$ & $\bigl\{-\tfrac32,\tfrac52,\tfrac{11}2,\tfrac{13}2\bigr\}$ \\[5pt]
$4$ & $\bigl\{-\tfrac52,\tfrac72,\tfrac92,\tfrac{11}2\bigr\}$ \\[5pt]
$8$ & $\bigl\{-\tfrac12,\tfrac32,\tfrac52,\tfrac72\bigr\}$
\end{tabular}
\caption{The exceptional solutions when $n=4$.
In~each~case, one checks that the even-sized-subset sums of $\{b_1,b_2,b_3,b_4\}$, together with their translations by~$+b_0$, yield the consecutive integers $0,1,2,3,\ldots,16$ in some order.}
\label{tab:anomaly}
\end{table}

Proposition~\ref{lem:odd} shows that if the even-sized subset sums of a set $U$ of size $n\geq 2$, together with its translates by a fixed number $b_0$ are equal to $0,\ldots,2^{n}-1$ in some order, then either $\{b_0\}\cup U=\{0,1,2,4,\ldots,2^{n-1}\}$ with $b_0\neq 0$ (the standard solution) or $\{b_0\}\cup U$ is some exceptional set that exists only when $n$ is even (indeed, either $n=2$ or $n=4$). Lemma~\ref{lemma2} follows, and hence so do Theorems~\ref{thm:main}--\ref{thm:folded2} in the odd cases. 

\vspace{.085in}
\begin{remark}
\label{rem:char0}
Although the proofs of Lemmas~\ref{lemma1} and~\ref{lemma2} given above make use of the ordering of $\mathbb R$ (e.g., in arguing that $b_1$ must be the smallest element, or that certain sums must be the smallest attainable value), the resulting classification statements hold over {any} field of characteristic $0$, not merely over $\mathbb R$ or $\mathbb Q$. Indeed, by the determinant-$2$ computation underlying Theorem~\ref{thm:coordinates}, each $b_i$ is an integer linear combination of the entries $a_S$, divided by $2$. Since the entries in question are the integers $0,\ldots,2^n-1$, every $b_i$ necessarily lies in $\frac12\mathbb Z\subset\mathbb Q$, regardless of which field of characteristic $0$ it is a priori permitted to lie in. Therefore, the classifications in  Lemmas~\ref{lemma1} and~\ref{lemma2}, stated over $\mathbb R$, are classifications over any field of characteristic $0$.

The same reasoning shows that, for each fixed $n$, only finitely many candidate $(n+1)$-tuples $(b_0,\ldots,b_n)\in\bigl(\tfrac12\mathbb Z\bigr)^{n+1}$ need ever be checked, each determined from a choice of which subset sum matches which target value by the same integer linear algebra.  Thus Lemmas~\ref{lemma1} and~\ref{lemma2} continue to hold, by the identical arguments, over any field of characteristic $p$ for sufficiently large $p$.  However, there can be exceptions for ``small'' $p$; for instance, we will see in Example~\ref{ex:f17} that there are exceptional solutions in characteristic $17$ when $n=4$.
\end{remark}

\section{The existence, classification, and enumeration of \linebreak compact and most-perfect magic hypercubes in each dimension: Proofs of Theorems~\ref{thm:final1}--\ref{thm:implications}}
\label{sec:higher}

\subsection{Proof of Theorem~\ref{thm:final1}}

We first consider  compact magic hypercubes of order 4 and dimension $m$, with entries $1,\ldots,4^m=2^{2m}$.  Let $M$ be such a compact $4\times 4\times \cdots 4$ magic hypercube of dimension $m$.  Due to the graph isomorphism $C_4^m\cong K_2^{2m}\cong Q_{2m}$, we can map $M$ to an element~$A\in V_{2m}(\mathbb Q)$ having entries $1,\ldots,2^{2m}$.  
Using the fact that  orthogonal lines and $2\times 2$ subsquares of $M$
together correspond to the square $2\times 2$ faces of the corresponding $2m$-dimensional hypercube $A$, we see that in fact $A\in U_{2m}(\mathbb Q)$. Theorem~\ref{thm:main} now implies the existence, classification, and enumeration of compact magic hypercubes of every dimension, and in particular, Theorem~\ref{thm:final1}. 

\subsection{Proof of Theorem~\ref{thm:final2}}

We now add on the condition of completeness.  Completeness (which implies pandiagonality) of $M$ is equivalent to the associativity of $A$.  Theorem~\ref{thm:associative} then yields the existence, classification, and enumeration of  most-perfect magic hypercubes of order $4$, and in particular Theorem~\ref{thm:final2}.

\subsection{Proof of Theorem~\ref{thm:implications}}

To classify those compact magic hypercubes of order 4 and dimension $m$ that are pandiagonal, it remains to determine, among the assignments of the $n+1$ values $\{0,2^0,\dots,2^{n-1}\}$ to $b_0,\dots,b_n$, exactly which ones also make every pandiagonal of $A(0,b_0,\ldots,b_n)$ sum to the same constant. 

Let $M$ be a compact magic hypercube of order 4 and dimension $m$, and let $A$ be the corresponding magic-faced hypercube of dimension $2m$ via the exceptional graph isomorphism $C_4^m\cong Q_{2m}$. Then a great pandiagonal of $M$ corresponds to four entries of $A=(a_{i_1\cdots i_{2m}})_{\i_j\in\{0,1\}}$ of the form $a_w,a_{w'},a_{w''},a_{w'''}$, where: $w=w_1\ldots w_{2m}$ is any string of $0's$ and $1's$ of length~$2m$; $w'=w'_1\ldots w'_{2m}$ is obtained by flipping $m$ bits of $w$; $w''$ is obtained by flipping the other $m$~bits of $w$; and $w'''$ is obtained by flipping all the bits of $w$.  

We now use (\ref{aSformula}) to compute the sum $a_w+a_{w'}+a_{w''}+a_{w'''}$.  This is where the parity of $m$ enters.  Write $a_w=a_S$, $a_{w'}=a_{S'}$, $a_{w''}=a_{S''}$, and $a_{w'''}=a_{S'''}$, where again $S=\{j:w_j=1\}$, $S'=\{j:w'_j=1\}$, etc.  

If $m$ is even, then $S,S',S'',S'''$ all have the same parity, and so (\ref{aSformula}) gives 
\begin{equation}
a_S+a_{S'}+a_{S''}+a_{S'''}=
\begin{cases} \displaystyle \,2\sum_{i=1}^{2m}b_i, & |S| \text{ even},\\[16.5pt] \displaystyle \,2\sum_{i=1}^{2m}b_i + 4b_0, & |S| \text{ odd}. \end{cases} 
\end{equation}
We obtain the magic sum in all cases if and only if $b_0=0$, i.e., if $A$ is associative and $M$ is complete.  Thus if $m$ is even, then compact and pandiagonal implies compact, pandiagonal, and complete, i.e., most-perfect.

If $m$ is odd, then $S,S',S''',S''$ have alternating parities, so (\ref{aSformula}) gives 
\begin{equation}
a_S+a_{S'}+a_{S''}+a_{S'''}\,=\,  2\sum_{i=0}^{2m}b_i 
\end{equation}
regardless of the parity of $S$.  Thus, if $m$ is odd, then compact automatically implies compact and pandiagonal.

We have proven Theorem~\ref{thm:implications}.

\section{Additional Properties and Future Directions}
\label{sec:additional}

The existence of most-perfect magic hypercubes in every dimension raises a number of questions, regarding their properties and future directions, which we hope will be considered in future work.  In this section, we describe three such questions that naturally arise, relating to: 1) {\it pan-$r$-agonality} of most-perfect magic hypercubes for values of $r$ different from the dimension; 2) {\it non-normal} most-perfect hypercubes, i.e., those not having consecutive integer entries; and 3) extensions to orders higher than~$4$.  We discuss each of these in turn. 

\subsection{Pan-$r$-agonality properties of most-perfect magic hypercubes}

The tesseract of Figure~\ref{fig:tesseract256}---and, more generally, the  most-perfect hypercubes guaranteed to exist by Theorem~\ref{thm:final2}---in fact automatically satisfy far more than Theorem~\ref{thm:final2} asserts. A natural question is whether only the entries along great diagonals of a most-perfect hypercube of dimension $m$ have the same magic sum, or whether the entries along some of the diagonals of smaller $r$-dimensional sub-hypercubes (known as $r$-agonals) might also have the same sum. The notion of $r$-agonals, and pan-$r$-agonals, also summing to the same magic constant has been a common theme in the literature on magic squares, cubes, and hypercubes.  In this language, orthogonal lines are $1$-agonals, while great diagonals are $m$-agonals; but we may consider $r$-agonals with $1<r<m$. 

In the case of Figure~\ref{fig:tesseract256}, not only does each great pandiagonal (all four coordinates moving together) sum to the same magic constant, but so does every ``pan-$3$-agonal'' (any three of the four coordinates moving together, the fourth held fixed at a fixed value)!  However, the ``pan-$2$-agonals'' (two coordinates moving, two fixed) do {\it not} all sum to the magic constant and, indeed, no most-perfect magic tesseract exists where the pan-$2$-agonals also all yield the same magic sum. 

The following theorem explains this phenomenon, for every dimension $m$. 
In particular, it explains the difference in behavior for odd and even $m$, and when the 
associativity of the magic-faced hypercube plays a role. 
 
\begin{theorem}
\label{thm:ragonal}
Let $n=2m$, and let $A$ be a magic-faced hypercube of dimension $n$.  Let $M$ be the corresponding compact magic hypercube of order $4$ obtained via the isomorphism $C_4^{\,m}\cong Q_n$ with coordinates indexed by $(\mathbb Z/4\mathbb Z)^m$. For $1\le r\le m$, define a  \emph{pan-$r$-agonal} to be a line obtained by fixing $m-r$ of the $m$ coordinates at given values and letting the remaining $r$ coordinates move  by a vector of the form $(\pm 1,\ldots,\pm 1)$ in $(\mathbb Z/4\mathbb Z)^r$. Then:
\begin{enumerate}
\item[{\em (a)}] If $r$ is odd, then every pan-$r$-agonal of $M$ sums to the magic constant $2^{n+1}+2$, regardless of whether $A$ is associative.
\item[{\em (b)}] If $r=m$ is even, then every pan-$r$-agonal of $M$ sums to the magic constant $2^{n+1}+2$ if and only if $A$ is associative.
\item[{\em (c)}] If $r<m$ and $r$ is even, then the pan-$r$-agonals of $M$ do not all sum to a common value, regardless of $A$. 
\end{enumerate}
\end{theorem}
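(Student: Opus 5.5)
The plan is to work with the Fourier description of Remark~\ref{rem:fourier} rather than with formula~(\ref{aSformula}) directly, since a pan-$r$-agonal sum then splits into separate contributions from the characters, each of which factors. First I will make the isomorphism $C_4\cong K_2\times K_2$ explicit via the Gray code $0,1,2,3\mapsto 00,01,11,10$, applied in each of the $m$ coordinates. In each coordinate that moves by $\pm1$, the four successive positions $p,p\pm1,p\pm2,p\pm3$ correspond to: no bit flipped, one of the two bits flipped, both bits flipped, and the other bit flipped. Hence a pan-$r$-agonal of $M$ corresponds to four vertices of $Q_n$ of the form
\[
w,\quad w+f_1,\quad w+f_1+f_2,\quad w+f_2,
\]
where $f_1,f_2\in(\mathbb Z/2\mathbb Z)^n$ have disjoint supports, each of weight $r$. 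The support of $f_1$ contains exactly one bit from each of the $r$ moving pairs, and the support of $f_2$ contains the other bit of each pair. The $2(m-r)$ bits coming from the fixed coordinates lie in neither support.

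Next I will write $A$ via (\ref{inft}) as a combination of the all-$1$'s hypercube and the characters $\chi_k(x)=(-1)^{\langle x,e_k\rangle}$ for $k=0,\dots,n$. Summing a character over the four points above gives
\[
\chi_k(w)\bigl(1+\chi_k(f_1)\bigr)\bigl(1+\chi_k(f_2)\bigr).
\]
This vanishes exactly when $\chi_k(f_1)=-1$ or $\chi_k(f_2)=-1$; otherwise it equals $\pm4$, with the sign depending on $w$. The values needed are:
\begin{itemize}
\item for $k=0$, $\chi_0(f_i)=(-1)^r$;
\item for $k\ge1$, $\chi_k(f_i)=(-1)^{r-[k\in\operatorname{supp} f_i]}$.
\end{itemize}
The all-$1$'s term contributes a constant, so on any line where every character term vanishes the sum is the same as on a $2\times2$ face. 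That common value is $4$ times the average entry, namely $2(2^n+1)=2^{n+1}+2$.

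The three cases then follow from these values.
\begin{enumerate}
\item[(a)] If $r$ is odd, $\chi_0$ gives $-1$ on $f_1$. For $k\ge1$, the index $k$ lies in at most one of the two supports, so the other $f_i$ gives $\chi_k(f_i)=-1$. Thus every character term vanishes and part (a) holds.
\item[(b)] If $r=m$ is even, every bit position lies in $\operatorname{supp}f_1\cup\operatorname{supp}f_2$. Hence every $\chi_k$ with $k\ge1$ vanishes, while $\chi_0$ contributes $\pm4\,\hat A(e_0)/2^n$, whose sign depends on $w$. So all pan-$m$-agonals sum to the magic constant iff $\hat A(e_0)=0$, i.e.\ $b_0=0$. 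By Remark~\ref{rem:fourier}, this is exactly associativity.
\item[(c)] If $r<m$ and $r$ is even, take any bit index $k$ belonging to a fixed coordinate. Then $\chi_k$ is $+1$ on both $f_1$ and $f_2$, so it contributes a term proportional to $\pm b_k$ that changes sign with $w$. Its sign flips when $w$ is moved along a suitable fixed direction, and this move does not disturb the other surviving terms in a way that could cancel it. A common value would therefore force $b_k=0$, and also $b_0=0$.
\end{enumerate}

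For (c), every coordinate can be chosen to be a fixed one, and $m-r\ge1$. So a common value for all pan-$r$-agonals would force $b_k=0$ for all $k\ge1$. This contradicts Lemma~\ref{lemma1}, which says the $b_i$ are $0,1,2,\dots,2^{n-1}$ and so at most one of them is zero. The main obstacle I expect is in (c): making rigorous that the surviving $\pm4b_k$ and $\pm4b_0$ terms cannot conspire to cancel across all $w$. I would handle this by comparing two pan-$r$-agonals that differ only by flipping a single fixed bit $k$ in $w$. This isolates $\chi_k(w)$ together with $\chi_0(w)$, and iterating over fixed bits then forces all relevant $b$'s to vanish.
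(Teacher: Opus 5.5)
Your proof is correct, but it uses a different mechanism from the paper's.

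\textbf{The two routes.} The paper substitutes formula~(\ref{aSformula}) directly, tracks the parities of $|S|,|S'|,|S''|,|S'''|$, and collects the coefficient of each $b_i$. You instead expand $A$ in the characters of Remark~\ref{rem:fourier}. Each character's contribution then factors as $\chi_k(w)\bigl(1+\chi_k(f_1)\bigr)\bigl(1+\chi_k(f_2)\bigr)$, so the three cases reduce to reading off which characters survive:
\begin{itemize}
\item[(a)] $r$ odd: none survive;
\item[(b)] $r=m$ even: only $\chi_0$ survives;
\item[(c)] $r<m$ even: $\chi_0$ and the $\chi_k$ for fixed bits $k$ survive.
\end{itemize}

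\textbf{What each buys.} The paper's route stays in the $b_i$-coordinates that feed directly into Lemma~\ref{lemma1}. Yours needs the Remark's dictionary $b_k=-\hat A(e_k)/2^{n-1}$ and ``associative $\iff\hat A(e_0)=0$''. In return it is more uniform and gives the exact dependence on $w$, which matters for (c). For even $r<m$, a pan-$r$-agonal through $w$ with fixed-bit set $F$ sums to
\[
2^{n+1}+2-2b_0\chi_0(w)-2\sum_{k\in F}b_k\chi_k(w).
\]
The paper's display for even $r$ omits the fixed-bit contributions $4\sum_{i\in S\setminus T}b_i$ (resp.\ $4\sum_{i\notin S\cup T,\,i\ge 1}b_i$), so it is exact only when $r=m$. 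Its comparison across different $T$'s in (c) is valid only after restricting to $w$ supported in $T$. Your argument shows non-constancy already within a single family of parallel pan-$r$-agonals.

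\textbf{A slip in your last paragraph.} Since $e_k$ has a $0$ in position $k$, flipping the single bit $k$ of $w$ leaves $\chi_k$ unchanged and flips $\chi_0$ and every other surviving character. So that comparison does not isolate $\chi_k$ together with $\chi_0$. No such manoeuvre is needed, though. The surviving characters are distinct and nontrivial, so by orthogonality on $(\mathbb Z/2\mathbb Z)^n$ the function $w\mapsto\hat A(e_0)\chi_0(w)+\sum_{k\in F}\hat A(e_k)\chi_k(w)$ is constant only if every coefficient vanishes. If you prefer a flipping argument, flip both bits $k,k'$ of one fixed coordinate; this flips exactly $\chi_k$ and $\chi_{k'}$. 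Letting $F$ vary then forces every $b_i=0$, which contradicts distinctness of the entries without any appeal to Lemma~\ref{lemma1}.
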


\pagebreak
\begin{proof}
The proof generalizes that of Theorem~\ref{thm:implications}. A pan-$r$-agonal of $M$ corresponds to four entries of $M=(a_{i_1\cdots i_{2m}})_{i_j\in\{0,1\}}$ of the form $a_w,a_{w'},a_{w''},a_{w'''}$, where $w=w_1\ldots w_{2m}$ is any string of $0$'s and $1$'s of length $2m$; $w'$ is obtained by flipping $r$ of the bits belonging to the $r$ moving coordinates of $w$; $w''$ is obtained by flipping the other $r$ bits amongst these moving coordinates; and $w'''$ is obtained by flipping all $2r$ of these bits. The remaining $2(m-r)$ bits, belonging to the $m-r$ fixed coordinates, remain unchanged throughout.

We again use (\ref{aSformula}) to compute $a_w+a_{w'}+a_{w''}+a_{w'''}$, writing $a_w=a_S,a_{w'}=a_{S'},a_{w''}=a_{S''},a_{w'''}=a_{S'''}$ as before, where now $S,S',S'',S'''$ differ from one another only within the $2r$-element set $T\subseteq\{1,\ldots,2m\}$ of flipped bits. This is where the parity of $r$ enters. 

If $r$ is odd, then $S,S',S''',S''$ have alternating parities, exactly as in the proof of Theorem~\ref{thm:implications}, so (\ref{aSformula}) gives 
\begin{equation}
a_S+a_{S'}+a_{S''}+a_{S'''}=\,2\sum_{i=0}^{2m}b_i
\end{equation}
regardless of the parity of $S$, and regardless of which $r$ of the $m$ coordinates are moving or of the values at which the remaining $m-r$ coordinates are fixed. This proves (a).

If $r$ is even, then $S,S',S'',S'''$ all have the same parity, and so (\ref{aSformula}) gives 
\begin{equation}
a_S+a_{S'}+a_{S''}+a_{S'''}=
\begin{cases} \displaystyle \,2\sum_{i\in T}b_i, & |S| \text{ even},\\[16.5pt] \displaystyle \,2\sum_{i\in T}b_i + 4b_0, & |S| \text{ odd}. \end{cases}
\end{equation}
If $r=m$, then $T=\{1,\ldots,2m\}$, and this is exactly the formula appearing in the proof of Theorem~\ref{thm:implications}: we obtain the magic sum in both cases if and only if $b_0=0$, i.e., if and only if $A$ is associative. This proves (b).

If instead $r<m$, then $T$ is a proper subset of $\{1,\ldots,2m\}$, depending on which $r$ of the $m$ coordinates are chosen to move; as $T$ ranges over the different $2r$-element subsets arising from different such choices, the value $\sum_{i\in T}b_i$ varies. Thus the pan-$r$-agonals of $M$ do not all sum to a common value, regardless of the choice of $A$. This proves (c).
\end{proof}

In particular, the case $r=m$ of Theorem~\ref{thm:ragonal} recovers Theorem~\ref{thm:implications}: if $m$ is odd, then compactness of a magic hypercube of order 4 and dimension $m$ also guarantees its pandiagonality by Theorem~\ref{thm:ragonal}(a); and if $m$ is even, then associativity is a necessary and sufficient condition for pandiagonality by Theorem~\ref{thm:ragonal}(b). 

\subsection{Non-normal magic faced hypercubes}

What can the entries of a non-normal magic-faced hypercube of dimension $n$ be?  What can the entries of a most-perfect magic hypercube of order 4 and dimension $m$ be? 

The answer is contained in the following theorem.

\begin{theorem}
\label{thm:entries}
Let $R$ be a ring in which $2$ is invertible. Let $n\ge 0$ be an integer. A set~$E$ of $2^n$ elements $e_1,\ldots,e_{2^n}\in R$ arises as the multiset of entries of a face-magic hypercube of dimension $n$ over $R$ precisely when:
\begin{itemize}
\item[{\em (a)}] If $n$ is even, then there exist $a,b_0,\ldots,b_n\in R$ such that $$E \:=\: a \,+\, \Bigl\{\displaystyle\sum_{i\in T}b_i \,:\, T\subseteq\{0,\ldots,n\} \textrm{\em \,\! with } |T|\textrm{\em \,\! even}\Bigr\},$$ i.e., $E$ is the translate of the set of even-sized subset sums of $\{b_0,\ldots,b_n\}$.
\item[{\em (b)}] If $n$ is odd, then there exist $a,b_0,\ldots,b_n\in R$ such that, writing $E_0$ for the set of even-sized subset sums of $\{b_1,\ldots,b_n\}$, we have $E = a + \bigl(E_0 \cup (b_0+E_0)\bigr)$.
\end{itemize}
\end{theorem}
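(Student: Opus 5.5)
The plan is to redo Theorem~\ref{thm:coordinates} over $R$ and then read off the multiset of entries from formula~(\ref{mainasformula}). The only place Section~\ref{sec:coordinates} used that $F$ is a field was to invert the change of variables (\ref{firsteq})--(\ref{secondeq}). That matrix has determinant $2$, which is a unit in $R$. Equivalently, one can solve it explicitly: summing the $n$ equations (\ref{firsteq}) gives
\[
\sum_i a_{\{i\}} = n\sum_k b_k - \sum_{i\ge1} b_i,
\]
and together with $C/2=\sum_k b_k$ (using $2^{-1}\in R$) this yields $b_0$, and then each $b_i = C/2 - a_{\{i\}}$. So the linear algebra goes through over any commutative ring with $2$ invertible. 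The inductive verification of (\ref{aSformula}) and its converse are purely additive identities, so they hold verbatim. Hence every face-magic hypercube over $R$ is $A(a_\emptyset,b_0,\ldots,b_n)$ for some $a_\emptyset,b_0,\ldots,b_n\in R$, and conversely every such tuple gives a face-magic hypercube.

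Next I describe the entries as $S$ ranges over subsets of $\{1,\ldots,n\}$. Put $a=a_\emptyset$. If $|S|$ is even, the entry is $a+\sum_{i\in S}b_i$. If $|S|$ is odd, the entry is $a+\sum_{i\in T}b_i$ with $T=\{0,\ldots,n\}\setminus S$, which contains $0$ and has size $n+1-|S|$.

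When $n$ is even, $|T|=n+1-|S|$ is even. So the map $S\mapsto S$ (for $|S|$ even) and $S\mapsto \{0,\ldots,n\}\setminus S$ (for $|S|$ odd) is a bijection onto the even-sized subsets of $\{0,\ldots,n\}$: those avoiding $0$ come from even $S$, and those containing $0$ come from odd $S$. This gives the multiset in (a).

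When $n$ is odd, complementation inside $\{1,\ldots,n\}$ sends odd-sized sets to even-sized ones. The odd-$S$ entries are therefore $a+b_0+\sum_{i\in S'}b_i$ with $S'=\{1,\ldots,n\}\setminus S$ even-sized, and the even-$S$ entries are $a+E_0$. This gives (b).

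The converse in both cases is immediate from the existence half of Theorem~\ref{thm:coordinates}, since any $a,b_0,\ldots,b_n$ define a face-magic hypercube with exactly those entries.

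The main point needing care is interpretation rather than difficulty. Over a ring the ``set'' $E$ should be read as a multiset, since sums may coincide. Also, the statement assumes $R$ is commutative, or at least that its additive group is what matters. As Remark~\ref{rem:fourier} notes, only bijectivity of doubling on the additive group is used, and I would point this out. I would also handle $n=0$ and $n=1$ separately, since the change of variables degenerates there: for $n=0$ the system involves only $C$, and the claims are checked directly.
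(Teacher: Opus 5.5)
Your proposal is correct and follows the paper's route: the paper derives Theorem~\ref{thm:entries} directly from formula~(\ref{aSformula}) (Theorem~\ref{thm:coordinates}), noting only that its derivation needs $2$ to be invertible. You fill in the details the paper leaves implicit: the explicit inversion of the change of variables over $R$; the complementation bijections $S\mapsto\{0,\ldots,n\}\setminus S$ for $n$ even and $S\mapsto\{1,\ldots,n\}\setminus S$ for $n$ odd; and the degenerate cases $n\le 1$, where the magic constant is not determined but a coordinate tuple still exists.
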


\noindent
Theorem~\ref{thm:entries} follows from formula (\ref{aSformula}) for the entries of a magic-faced hypercube, which was proved  under the assumption that 2 is invertible.  Since the entries of a most-perfect hypercube of dimension $m$ and order 4 are the same as the corresponding face-magic hypercube of dimension $n=2m$, we also have the following corollary. 

\begin{corollary}\label{cor:entries}
Let $R$ be a ring in which $2$ is invertible. Let $n=2m\ge 0$ be an even integer. A~set~$E$ of $2^n$ elements $e_1,\ldots,e_{2^n}\in R$ arises as the multiset of entries of a most-perfect magic hypercube of dimension $m$ and order $4$ over $R$ precisely when there exist $a,b_0,\ldots,b_n\in R$ such that 
\begin{equation}
E \:=\: a \,+\, \Bigl\{\displaystyle\sum_{i\in T}b_i \,:\, T\subseteq\{0,\ldots,n\} \textrm{\em \,\! with } |T|\textrm{\em \,\! even}\Bigr\},
\end{equation} 
i.e., $E$ is the translate of the set of even-sized subset sums of $\{b_0,\ldots,b_n\}$.
\end{corollary}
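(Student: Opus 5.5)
The plan is to deduce the corollary from Theorem~\ref{thm:entries}(a), combined with the translation between most-perfect order-$4$ hypercubes and associative face-magic hypercubes of dimension $n=2m$ from the proof of Theorem~\ref{thm:final2}. The two directions go differently, and I expect sufficiency to be the real obstacle; it may in fact fail as worded.

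\emph{Necessity.} Let $M$ be most-perfect of order $4$ and dimension $m$ over $R$. Transport it along the isomorphism $C_4^m\cong Q_{2m}$ to obtain $A\in U_{2m}(R)$, and write $A=A(a,b_0,\ldots,b_n)$ using formula~(\ref{aSformula}). This formula was derived using only that $2$ is invertible, so it holds over $R$. By Theorem~\ref{thm:entries}(a), the multiset of entries of $A$, which is that of $M$, is
\[
a+\Bigl\{\sum_{i\in T}b_i : |T| \text{ even}\Bigr\}.
\]
This is exactly the stated form. Completeness of $M$, i.e.\ associativity of $A$, gives additionally $b_0=0$, by the computation $a_S+a_{\overline S}=\sum_{i\ge1}b_i$ or $\sum_{i\ge1}b_i+2b_0$ according to the parity of $|S|$. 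That extra information is not needed for this direction.

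\emph{Sufficiency.} Given $a,b_0,\ldots,b_n$, I would reorder the $b_i$ so that the coefficient placed in position $0$ vanishes. I would then take $A=A(a,b_0,\ldots,b_n)$ and pull it back to a hypercube $M$ on $C_4^m$. Compactness of $M$ follows from Theorem~\ref{thm:coordinates}. Completeness follows from associativity of $A$, since $b_0=0$, and this in turn gives pandiagonality, as in the proof of Theorem~\ref{thm:implications}. Reordering is harmless: permuting the $b_i$ by $S_{n+1}$ does not change the multiset of even-sized subset sums.

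\emph{The main obstacle.} The reordering step requires some $b_i$ to be $0$, or more generally requires the even-sized subset sums of $\{b_0,\ldots,b_n\}$ to coincide with all subset sums of some $n$ elements. I do not see how to arrange this for arbitrary $b_i$. Shifting $a$ does not help, because shifting every $b_i$ by $s$ changes the sum over $T$ by $|T|s$, which is not uniform in $T$.

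The first test I would run is $m=1$. There a most-perfect line $e_1,e_2,e_3,e_4$ only needs $e_1+e_3=e_2+e_4$. The candidate entry set is $\{0,\,b_0+b_1,\,b_0+b_2,\,b_1+b_2\}$. Each of the three ways of splitting this set into two pairs with equal sums forces one of $b_0,b_1,b_2$ to be $0$. So for generic $b_i$ no such line exists, and the reverse implication appears to fail.

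If that check holds up, I would prove the corrected equivalence instead: $E$ arises if and only if it has the stated form with some $b_i=0$. Equivalently, $E$ is $a$ plus the set of all subset sums of $n$ elements of $R$. Both directions of that version are handled by the arguments above.
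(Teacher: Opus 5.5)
Your analysis is correct: the ``if'' direction of the statement as written is false, and your corrected version is the right one. The paper's justification is one sentence: the entries of a most-perfect hypercube are those of the corresponding face-magic hypercube of dimension $n=2m$, so Theorem~\ref{thm:entries}(a) applies. That gives exactly your necessity direction. The converse, however, needs every face-magic $A(a,b_0,\ldots,b_n)$ to transport to a most-perfect hypercube, and this is true only for \emph{compact} hypercubes. Completeness is associativity of $A$, which forces $b_0=0$. So the characterization as stated is the one for compact hypercubes of order~$4$ (the setting of Theorem~\ref{thm:final1}), not for most-perfect ones.

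Your $m=1$ check is sound. Over $\mathbb{Q}$ with $(b_0,b_1,b_2)=(1,2,4)$ you get $E=\{0,3,5,6\}$, and none of the three pairings has equal sums.

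The failure is not an artifact of the degenerate dimension, and you can make the counterexample uniform in $m\ge1$:
\begin{itemize}
\item In a most-perfect hypercube, the fixed-point-free involution $x\mapsto x+(2,\ldots,2)$ pairs the entries into pairs with common sum $c$. Here $c$ is half the magic sum, i.e.\ twice the mean.
\item Take $b_i=2^i$ for $0\le i\le n$ in $\mathbb{Q}$. Then $E$ is the set of integers in $[0,2^{n+1}-1]$ with an even number of binary digits $1$, and $c=2^{n+1}-1$.
\item So $0\in E$ would have to pair with $2^{n+1}-1$. That number has $n+1$ binary digits $1$, an odd count, so it is not in $E$.
\end{itemize}
For $m=2$ this gives $E=\{0,3,5,6,9,10,12,15,17,18,20,23,24,27,29,30\}$. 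This is the entry set of a compact toroidal $4\times4$ square over $\mathbb{Q}$, but of no most-perfect one.

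Your corrected statement is: $E$ has the stated form with some $b_i=0$, or equivalently $E$ is $a$ plus the set of all subset sums of $n$ elements of $R$. Your proof of it goes through. After reordering so that $b_0=0$, $A$ is associative, which gives completeness. Completeness in turn gives pandiagonality, since each great pandiagonal $x,x+d,x+2d,x+3d$ splits into the two antipodal pairs $\{x,x+2d\}$ and $\{x+d,x+3d\}$.
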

\noindent It would be interesting to consider the $R$-module structure of $U_n(R)$ also in cases where 2 is not invertible. 

Theorem~\ref{thm:entries} and Corollary~\ref{cor:entries} 
describe when face-magic hypercubes of dimension $n$ exist with given entries.  We next consider the question of uniqueness.  
The uniqueness in the case of the entries $E=\{1,\ldots,2^n\}$ over the ring $R=\mathbb R$ followed from Lemma~\ref{lemma1} (in the case of even $n$)  and Lemma~\ref{lemma2} (in the case of odd $n$); these lemmas showed that the corresponding sets $\{a,b_0,\ldots,b_n\}$ as required by Theorem~\ref{thm:entries} were essentially unique up to suitable permutations of these sets once the choice $a=1$ was made.  The proofs of these lemmas, carried out in Section~\ref{sec:lemmas}, crucially used the fact that we were working in characteristic~0 (see~Remark~\ref{rem:char0}).  

The latter hypothesis on $R$ having characteristic~0 was indeed relevant, as the following counterexample over the finite field $\mathbb F_{17}$ illustrates. 

\begin{example}
\label{ex:f17}
The two sets $\{b_0,\ldots,b_4\}=\{0,1,2,4,8\}$ and $\{b_0,\ldots,b_4\}=\{0,5,6,7,14\}$ in $\mathbb F_{17}$ give rise to two distinct face-magic tesseracts over $\mathbb F_{17}$ with the identical sets of even-sized subset sums, namely, $\{0,1,2,\ldots,15\}\subset \mathbb F_{17}$. Translating both by $a=1$ gives the common set  of entries $\{1,2,\ldots,16\}$; the first hypercube recovers the dim~4 magic-faced tesseract depicted in Figure~\ref{mfh}. Therefore, moving from the ordered field $\mathbb R$ to the field $\mathbb F_{17}$ of positive characteristic yields an additional magic-faced tesseract not related to the first by any element of $\Aut(FQ_4)$. This is illustrated in Figure~\ref{fig:f17example}. 

It follows that the Khajuraho Magic Square, while being the unique most-perfect magic square of order 4 in characteristic~0, up to the action of $\Aut(FQ_4)\cong W(B_4)$, is {\it not} the unique such square in characteristic~17---there is actually a second $W(B_4)$-orbit of such most-perfect magic squares over $\mathbb F_{17}$. This is illustrated in Figure~\ref{fig:khajuraho_f17}. 
\end{example}

\begin{figure}[htbp]
\centering
\begin{subfigure}[b]{0.47\textwidth}
\centering
\begin{tikzpicture}[every path/.style={draw=red}, every node/.style={text=blue}, scale=0.56, baseline=(current bounding box.center)]
\draw (0,0) -- (10,0);
\draw (0,0) -- (0,10);
\draw (10,0) -- (10,10);
\draw (0,10) -- (10,10);
\draw (2.5,1.5) -- (8.52,1.5);
\draw (9.08,1.5) -- (9.72,1.5);
\draw (10.28,1.5) -- (12.5,1.5);
\draw (12.5,1.5) -- (12.5,11.5);
\draw (12.5,11.5) -- (2.5,11.5);
\draw (2.5,11.5) -- (2.5,10.28);
\draw (2.5,9.72) -- (2.5,8.306);
\draw (2.5,7.746) -- (2.5,2.912);
\draw (2.5,2.352) -- (2.5,1.5);
\draw (0,0) -- (2.5,1.5);
\draw (10,0) -- (12.5,1.5);
\draw (10,10) -- (12.5,11.5);
\draw (0,10) -- (2.5,11.5);
\draw (3.8,4) -- (6.8,4) -- (6.8,7) -- (3.8,7) -- cycle;
\draw (5.675,5.125) -- (6.52,5.125);
\draw (7.08,5.125) -- (8.675,5.125);
\draw (8.675,5.125) -- (8.675,8.125);
\draw (5.675,8.125) -- (7.72,8.125);
\draw (8.28,8.125) -- (8.675,8.125);
\draw (5.675,8.125) -- (5.675,7.28);
\draw (5.675,6.72) -- (5.675,5.125);
\draw (3.8,4) -- (5.675,5.125);
\draw (6.8,4) -- (8.675,5.125);
\draw (6.8,7) -- (8.675,8.125);
\draw (3.8,7) -- (5.675,8.125);
\draw (0,0) -- (3.8,4);
\draw (10,0) -- (6.8,4);
\draw (10,10) -- (6.8,7);
\draw (0,10) -- (3.8,7);
\draw (2.5,1.5) -- (4.505,3.789);
\draw (4.874,4.211) -- (5.675,5.125);
\draw (12.5,1.5) -- (10.2,3.677);
\draw (9.797,4.062) -- (8.675,5.125);
\draw (12.5,11.5) -- (10.21,9.479);
\draw (9.79,9.109) -- (8.675,8.125);
\draw (2.5,11.5) -- (3.719,10.2);
\draw (4.103,9.796) -- (5.675,8.125);
\node[fill=white,inner sep=2pt] at (0,0) {$1$};
\node[fill=white,inner sep=2pt] at (10,0) {$15$};
\node[fill=white,inner sep=2pt] at (10,10) {$4$};
\node[fill=white,inner sep=2pt] at (0,10) {$14$};
\node[fill=white,inner sep=2pt] at (2.5,1.5) {$12$};
\node[fill=white,inner sep=2pt] at (12.5,1.5) {$6$};
\node[fill=white,inner sep=2pt] at (12.5,11.5) {$9$};
\node[fill=white,inner sep=2pt] at (2.5,11.5) {$7$};
\node[fill=white,inner sep=2pt] at (3.8,4) {$8$};
\node[fill=white,inner sep=2pt] at (6.8,4) {$10$};
\node[fill=white,inner sep=2pt] at (6.8,7) {$5$};
\node[fill=white,inner sep=2pt] at (3.8,7) {$11$};
\node[fill=white,inner sep=2pt] at (5.675,5.125) {$13$};
\node[fill=white,inner sep=2pt] at (8.675,5.125) {$3$};
\node[fill=white,inner sep=2pt] at (8.675,8.125) {$16$};
\node[fill=white,inner sep=2pt] at (5.675,8.125) {$2$};
\end{tikzpicture}
\caption*{$a_\emptyset=1$, \:$(b_0,\ldots,b_4)=(0,1,2,4,8)$}
\end{subfigure}
\hfill
\begin{subfigure}[b]{0.47\textwidth}
\centering
\begin{tikzpicture}[every path/.style={draw=red}, every node/.style={text=blue}, scale=0.56, baseline=(current bounding box.center)]
\draw (0,0) -- (10,0);
\draw (0,0) -- (0,10);
\draw (10,0) -- (10,10);
\draw (0,10) -- (10,10);
\draw (2.5,1.5) -- (8.52,1.5);
\draw (9.08,1.5) -- (9.72,1.5);
\draw (10.28,1.5) -- (12.5,1.5);
\draw (12.5,1.5) -- (12.5,11.5);
\draw (12.5,11.5) -- (2.5,11.5);
\draw (2.5,11.5) -- (2.5,10.28);
\draw (2.5,9.72) -- (2.5,8.306);
\draw (2.5,7.746) -- (2.5,2.912);
\draw (2.5,2.352) -- (2.5,1.5);
\draw (0,0) -- (2.5,1.5);
\draw (10,0) -- (12.5,1.5);
\draw (10,10) -- (12.5,11.5);
\draw (0,10) -- (2.5,11.5);
\draw (3.8,4) -- (6.8,4) -- (6.8,7) -- (3.8,7) -- cycle;
\draw (5.675,5.125) -- (6.52,5.125);
\draw (7.08,5.125) -- (8.675,5.125);
\draw (8.675,5.125) -- (8.675,8.125);
\draw (5.675,8.125) -- (7.72,8.125);
\draw (8.28,8.125) -- (8.675,8.125);
\draw (5.675,8.125) -- (5.675,7.28);
\draw (5.675,6.72) -- (5.675,5.125);
\draw (3.8,4) -- (5.675,5.125);
\draw (6.8,4) -- (8.675,5.125);
\draw (6.8,7) -- (8.675,8.125);
\draw (3.8,7) -- (5.675,8.125);
\draw (0,0) -- (3.8,4);
\draw (10,0) -- (6.8,4);
\draw (10,10) -- (6.8,7);
\draw (0,10) -- (3.8,7);
\draw (2.5,1.5) -- (4.505,3.789);
\draw (4.874,4.211) -- (5.675,5.125);
\draw (12.5,1.5) -- (10.2,3.677);
\draw (9.797,4.062) -- (8.675,5.125);
\draw (12.5,11.5) -- (10.21,9.479);
\draw (9.79,9.109) -- (8.675,8.125);
\draw (2.5,11.5) -- (3.719,10.2);
\draw (4.103,9.796) -- (5.675,8.125);
\node[fill=white,inner sep=2pt] at (0,0) {$1$};
\node[fill=white,inner sep=2pt] at (10,0) {$11$};
\node[fill=white,inner sep=2pt] at (10,10) {$12$};
\node[fill=white,inner sep=2pt] at (0,10) {$10$};
\node[fill=white,inner sep=2pt] at (2.5,1.5) {$9$};
\node[fill=white,inner sep=2pt] at (12.5,1.5) {$13$};
\node[fill=white,inner sep=2pt] at (12.5,11.5) {$15$};
\node[fill=white,inner sep=2pt] at (2.5,11.5) {$14$};
\node[fill=white,inner sep=2pt] at (3.8,4) {$2$};
\node[fill=white,inner sep=2pt] at (6.8,4) {$3$};
\node[fill=white,inner sep=2pt] at (6.8,7) {$8$};
\node[fill=white,inner sep=2pt] at (3.8,7) {$4$};
\node[fill=white,inner sep=2pt] at (5.675,5.125) {$5$};
\node[fill=white,inner sep=2pt] at (8.675,5.125) {$7$};
\node[fill=white,inner sep=2pt] at (8.675,8.125) {$16$};
\node[fill=white,inner sep=2pt] at (5.675,8.125) {$6$};
\end{tikzpicture}
\caption*{$a_\emptyset=1$, \:$(b_0,\ldots,b_4)=(0,5,6,7,14)$}
\end{subfigure}
\vspace{.05in}
\caption{Two magic-faced tesseracts over $\mathbb F_{17}$, with identical entries $\{1,2,\ldots,16\}\subset \mathbb F_{17}$  and magic constant  $34\equiv 0 \pmod{17}$ on every $2\times2$ face. The first is the same dimension-$4$ example of Figure~\ref{mfh}. Despite having the same set of entries, these two tesseracts are not related by any element of $\Aut(FQ_4)$; for example, the entries 1 and 14 are adjacent on the first magic-faced hypercube but they are not adjacent on the second (even when the latter is viewed as a folded hypercube $FQ_4$).}
\label{fig:f17example}
\end{figure}

\begin{figure}[htbp]
\centering
\vspace{-.125in}
\begin{subfigure}[b]{0.47\textwidth}
\centering
\begin{tikzpicture}[scale=0.7225]
\def\cs{1.25}
\draw[line width=0.9pt,xstep=\cs,ystep=\cs] (0,0) grid (4*\cs,4*\cs);
\node at (0.5*\cs,3.5*\cs) {\Large 7};
\node at (1.5*\cs,3.5*\cs) {\Large 12};
\node at (2.5*\cs,3.5*\cs) {\Large 1};
\node at (3.5*\cs,3.5*\cs) {\Large 14};
\node at (0.5*\cs,2.5*\cs) {\Large 2};
\node at (1.5*\cs,2.5*\cs) {\Large 13};
\node at (2.5*\cs,2.5*\cs) {\Large 8};
\node at (3.5*\cs,2.5*\cs) {\Large 11};
\node at (0.5*\cs,1.5*\cs) {\Large 16};
\node at (1.5*\cs,1.5*\cs) {\Large 3};
\node at (2.5*\cs,1.5*\cs) {\Large 10};
\node at (3.5*\cs,1.5*\cs) {\Large 5};
\node at (0.5*\cs,0.5*\cs) {\Large 9};
\node at (1.5*\cs,0.5*\cs) {\Large 6};
\node at (2.5*\cs,0.5*\cs) {\Large 15};
\node at (3.5*\cs,0.5*\cs) {\Large 4};
\end{tikzpicture}
\vspace{.05in}
\caption*{\!\!\!The Khajuraho Magic Square}
\end{subfigure}
\hfill
\begin{subfigure}[b]{0.47\textwidth}
\centering
\begin{tikzpicture}[scale=0.7225]
\def\cs{1.25}
\draw[line width=0.9pt,xstep=\cs,ystep=\cs] (0,0) grid (4*\cs,4*\cs);
\node at (0.5*\cs,3.5*\cs) {\Large 14};
\node at (1.5*\cs,3.5*\cs) {\Large 9};
\node at (2.5*\cs,3.5*\cs) {\Large 1};
\node at (3.5*\cs,3.5*\cs) {\Large 10};
\node at (0.5*\cs,2.5*\cs) {\Large 6};
\node at (1.5*\cs,2.5*\cs) {\Large 5};
\node at (2.5*\cs,2.5*\cs) {\Large 2};
\node at (3.5*\cs,2.5*\cs) {\Large 4};
\node at (0.5*\cs,1.5*\cs) {\Large 16};
\node at (1.5*\cs,1.5*\cs) {\Large 7};
\node at (2.5*\cs,1.5*\cs) {\Large 3};
\node at (3.5*\cs,1.5*\cs) {\Large 8};
\node at (0.5*\cs,0.5*\cs) {\Large 15};
\node at (1.5*\cs,0.5*\cs) {\Large 13};
\node at (2.5*\cs,0.5*\cs) {\Large 11};
\node at (3.5*\cs,0.5*\cs) {\Large 12};
\end{tikzpicture}
\vspace{.05in}
\caption*{Another Most-perfect Magic Square over $\mathbb F_{17}$}
\end{subfigure}
\vspace{.05in}
\caption{Two most-perfect $4\times4$ magic squares over $\mathbb F_{17}$, with identical entries $\{1,2,\ldots,16\}$ and magic sum $34\equiv0\pmod{17}$ on every row, column, $2\times2$ block, diagonal, and pan-diagonal; and sum $17\equiv 0\pmod{17}$ for any two entries whose entries are at distance 2 in each coordinate. The first is the original Khajuraho Magic Square; the second is obtained from the nonstandard magic-faced tesseract over $\mathbb F_{17}$ in Figure~\ref{fig:f17example}, via the same exceptional isomorphism~$C_4^2\cong Q_4$. \\[.1in]
Therefore, even though the Khajuraho Magic Square is the unique most-perfect $4\times4$ magic square over $\mathbb R$, up to the action of $W(B_4)$, there is an additional one in characteristic 17.}
\label{fig:khajuraho_f17}
\end{figure}

Example~\ref{ex:f17} is closely related to the long line of work on the ``Multiset Recovery Problem,'' which goes back to a  classical 1957 question of Moser~\cite{Moser1957}: can a multiset of numbers be uniquely recovered from the multiset of its subset sums of a fixed order? In their important work, Selfridge and Straus~\cite{SelfridgeStraus1958} gave the first significant results towards this problem in~1958, showing that a multiset of $n$ numbers is recoverable from its $2$-sums if and only if $n$ is not a power of $2$. They also constructed the first explicit failures of recovery for $3$-sums and $4$-sums. These results were extended by Gordon, Fraenkel, and Straus~\cite{GordonFraenkelStraus1962}, and later by Ewell~\cite{Ewell1968}, Boman--Bolker--O'Neil~\cite{BomanBolkerONeil1991}, and Isomurodov--Kokhas~\cite{IsomurodovKokhas2017}, who finally resolved the last outstanding case $(n,s)=(12,4)$ after nearly fifty years; see the survey of Fomin~\cite{Fomin2019} for the complete history.

The aforementioned works were primarily concerned with subset sums of a {fixed order} $s$. The variant most directly relevant to us, i.e., whether a multiset can be recovered from the multiset of \emph{all} its subset sums of every order combined---was fully resolved only recently, by Ciprietti and Glaudo~\cite{CipriettiGlaudo2025} and Glaudo and Kravitz~\cite{GlaudoKravitz2024}. They prove that an abelian group $G$ has the property that ``same total subset sums'' implies ``related by a sign flip on a zero-summing subset'' if and only if every torsion element of $G$ has order lying in an explicit set $O_{FS}$.  In particular, every torsion-free group (such as $\mathbb{Z}$, $\mathbb{Q}$, or $\mathbb{R}$) has this property automatically, while $\mathbb{Z}/17\mathbb{Z}$ is the smallest cyclic group for which it fails.  
This is what motivated us to look for a counterexample in characteristic 17 (even though here we are concerned only with even-sized subsets). 

It is interesting to ask whether the analogue for {\it even-sized} subsets has a similar structure to the theorems proven for all-sized subsets in \cite{CipriettiGlaudo2025,GlaudoKravitz2024}.  If close analogues held, that would mean that the analogues of Lemmas~\ref{lemma1} and \ref{lemma2} might hold for all sets of entries in $\mathbb R$. Either way, it would be interesting to classify collisions of the type exhibited in Example~\ref{ex:f17} over $\mathbb R$ and indeed over more general rings / abelian groups.  

We end with some of the various relevant questions that arise: 

\begin{itemize}
\item What is the structure of the $R$-module $U_n(R)$ of face magic hypercubes of dimension~$n$ over $R$ when the characteristic is even and positive?
\item Given a ring $R$, for which subsets $E\subset R$ of size $2^{n}$ does there exist more than one $W(B_{n})$-orbit of face-magic hypercubes of dimension $n=2m$ (equivalently, of most-perfect magic hypercubes of dimension $m$ and order 4) with $E$ as the set of entries?   We may also ask the analogous questions for face-magic hypercubes of dimension $n$ where $n$ is odd.  
\item We have seen in Example~\ref{ex:f17} that such collisions of even-sized subset sums of two sets of size $n+1$ with $n$ even can happen in  any ring of characteristic~17. Can such a collision happen in $\mathbb R$?  That is, do the analogues of Lemmas~\ref{lemma1} (and also Lemma~\ref{lemma2}) remain true for any achievable set $E\subset \mathbb R$ in place of $\{0,\ldots,2^{n}-1\}$?  
\end{itemize}

\subsection{Most-perfect magic hypercubes of order $k$ and dimension $m$}

Theorems~\ref{thm:final1} and~\ref{thm:final2} (and Corollary~\ref{thm:countmostperfect}) represent the first time that not just existence but an exact count of a type of magic hypercubes of interest has been given across dimensions. 

The first-known (and still essentially the only) exact count of a type of normal magic squares of interest across {\it orders} (rather than dimensions) is also for ``most-perfect'' magic squares, due to the celebrated work of Ollerenshaw and Brée~\cite{OllerenshawBree1998}. 

{A {\it most-perfect magic hypercube} of order $k$ (where $k$ is even) and dimension $n\geq 2$ is an arrangement of the numbers $1,2,\ldots,k^n$ in a $k\times \cdots \times k$ array such that every orthogonal line has the same sum (i.e., it is magic), every $2\times 2$ orthogonal square has the same sum (i.e., it is compact), and every pair of entries where each coordinate differs by $k/2$ modulo $k$ has the same sum (i.e., it is complete).  

Since the minimum value of $k$ for which a most-perfect magic hypercube of order $k$ can exist in any given dimension $m$ is $4$, the case of order $k=4$ has traditionally played a prominent role in the theory, and is the case treated here. Similarly, the minimum dimension $m$ of interest is $m=2$, which was the case treated by Ollerenshaw and Brée.} 

This suggests a potential common generalization of their work and ours, including a joining of a number of distinct techniques in both works---a natural and interesting potential direction that we hope may be considered in future work.

\subsection*{Acknowledgments}

I am very grateful to Professors P.~Deligne, H.~W.~Lenstra, K.~Ramasubramanian, P.~Sarnak, and M.~D.~Srinivas for many helpful conversations. It is a pleasure to thank the Lodha Mathematical Sciences Institute for its hospitality during the summer of 2025, where much of this work was carried out, and the National Science Foundation (Grant  DMS-1001828) for its kind support.

\end{document}